\newcommand{\defi}[1]{\textsf{#1}} 
\def\part#1{\frac{\partial\phantom{#1}}{\partial#1}}
\newtheorem{thm}{Theorem}
\newtheorem{prp}[thm]{Proposition}
\newtheorem{lem}[thm]{Lemma}
\newtheorem{cor}[thm]{Corollary}
\newenvironment{prf}{\begin{trivlist}\item[]{\bf Proof} }
{\hfill $\Box$ \end{trivlist}}
\newenvironment{rmk}{\begin{trivlist}\item[]{\bf Remark} }
{\end{trivlist}}
\newenvironment{que}{\begin{trivlist}\item[]{\bf Question} }
{\end{trivlist}}
\newcommand{\ca}{c_\alpha}
\newcommand{\ia}{i_\alpha}
\newcommand{\disc}{d\left(\Ga\right)}
\newcommand{\hideqed}{\renewcommand{\Box}{}} 
\def\Z{\ifmmode{{\mathbb Z}}\else{${\mathbb Z}$}\fi}
\def\Q{\ifmmode{{\mathbb Q}}\else{${\mathbb Q}$}\fi}
\def\C{\ifmmode{{\mathbb C}}\else{${\mathbb C}$}\fi}
\def\P{\ifmmode{{\mathbb P}}\else{${\mathbb P}$}\fi}
\def\H{\ifmmode{{\mathrm H}}\else{${\mathrm H}$}\fi}
\def\G{\ifmmode{{\mathbb G}}\else{${\mathbb G}$}\fi}
\def\R{\ifmmode{{\mathbb R}}\else{${\mathbb R}$}\fi}
\def\F{\ifmmode{{\mathbb F}}\else{${\mathbb F}$}\fi}
\def\O{\ifmmode{{\cal O}}\else{${\cal O}$}\fi}
\def\Br{\ifmmode{{\mathrm{Br}}}\else{${\mathrm{Br}}$}\fi}
\def\D{\ifmmode{{\cal{D}}^b}\else{${{\cal{D}}^b}$}\fi}
\newcommand{\Sbar}{{\overline{S}}}
\newcommand{\Qbar}{{\overline{\Q}}}
\newcommand{\kbar}{{\overline{k}}}
\newcommand{\Ga}{\Gamma_{\langle\alpha\rangle}}
\newcommand{\Adeles}{{\mathbb A}}
\newcommand{\calC}{{\mathcal C}}
\newcommand{\calD}{{\mathcal D}}
\newcommand{\calK}{{\mathcal K}}
\newcommand{\calS}{{\mathcal S}}
\DeclareMathOperator{\inv}{inv}
\DeclareMathOperator{\im}{im}
\DeclareMathOperator{\Hom}{Hom}
\DeclareMathOperator{\Aut}{Aut}
\DeclareMathOperator{\Gal}{Gal}
\DeclareMathOperator{\Pic}{Pic}
\DeclareMathOperator{\Spec}{Spec}
\DeclareMathOperator{\Proj}{Proj}
\DeclareMathOperator{\ev}{ev}
\DeclareMathOperator{\et}{et}
\DeclareMathOperator{\GL}{GL}
\DeclareMathOperator{\Sign}{Sign}
\DeclareMathOperator{\NS}{NS}
\begin{document}

\title{Brauer groups on K3 surfaces and arithmetic applications\footnote{2010 {\em Mathematics Subject Classification.\/} 14J28, 14G05, 14F22.}}
\author{Kelly McKinnie, Justin Sawon, Sho Tanimoto, and Anthony V\'arilly-Alvarado}
\date{April, 2014}
\maketitle

\begin{abstract}

For a prime $p$, we study subgroups of order $p$ of the Brauer group $\Br(S)$ of a general complex polarized K3 surface of degree $2d$, generalizing earlier work of van Geemen. These groups correspond to sublattices of index $p$ of the transcendental lattice $T_S$ of $S$; we classify these lattices up to isomorphism using Nikulin's discriminant form technique. We then study geometric realizations of $p$-torsion Brauer elements as Brauer-Severi varieties in a few cases via projective duality. We use one of these constructions for an arithmetic application, giving new kinds of counter-examples to weak approximation on K3 surfaces of degree two, accounted for by transcendental Brauer-Manin obstructions.

\end{abstract}

\section{Introduction}

Let $S$ be a smooth projective geometrically integral variety over a number field $k$; write $\Adeles$ for the ring of adeles of $k$. Assume that $S(\Adeles) \neq \emptyset$. It is well-known that elements of the Brauer group $\Br(S) := \H^2_{\et}(S,\G_m)$ can obstruct the existence of $k$-rational points of $S$~\cite{manin}; in such cases we say there is a \defi{Brauer-Manin obstruction} to the \defi{Hasse principle}. Brauer elements can also explain why sometimes the image of the diagonal map $S(k) \hookrightarrow S(\Adeles)$ fails to be dense; in such cases we say there is a \defi{Brauer-Manin obstruction} to \defi{weak approximation} (see Section~\ref{S:BMobs} for more details).

In order to show that a particular Brauer element obstructs the Hasse principle or weak approximation, one often needs a geometric realization of the Brauer element, especially if the element remains non-trivial after extension of scalars to an algebraic closure (such elements are known as \defi{transcendental} elements). In~\cite{vangeemen05} van Geemen studied transcendental $2$-torsion Brauer elements on generic complex K3 surfaces $S$ of degree two. Using lattice-theoretic methods, he gave a classification into three cases, and described geometric realizations of the Brauer elements. In the first case, $S$ is the double cover of $\P^2$ branched over the sextic discriminant curve of the quadric surface fibration on a cubic fourfold containing a plane; Hassett, V{\'a}rilly-Alvarado, and Varilly~\cite{hvv11} showed that the Brauer element in this example can obstruct weak approximation. The second case involves a double cover of $\P^2\times\P^2$ branched over a hypersurface of bidegree $(2,2)$; Hassett and V{\'a}rilly-Alvarado~\cite{hv11} showed that the Brauer element in this example can obstruct the Hasse principle. In the third case, $S$ is the double cover of $\P^2$ branched over the sextic discriminant curve of the net of quadrics defining a K3 surface of degree eight in $\P^5$; transcendental Brauer elements of this type have not yet been used for arithmetic applications.

The goal of this paper is to extend this earlier work in several directions: \\

\noindent {\bf Classification of order $p$ subgroups of general K3 surfaces.}
Let $S$ be a general complex polarized K3 surface of degree $2d$, and write $T_S = \NS(S)^\perp \subset \H^2(S,\Z)$ for its \defi{transcendental lattice}. Let $p$ be an odd prime.  To classify subgroups of order $p$ in $\Br(S)$ we use the correspondence
\[
\{ \textup{subgroups of order $p$ in }\Br(S) \} \longleftrightarrow
\{ \textup{sublattices of index $p$ in }T_S\}
\]
furnished by the exponential sequence and elementary lattice-theoretic properties of $\H^2(S,\Z)$ (see Section~2 of~\cite{vangeemen05}). We apply Nikulin's discriminant form technique~\cite{nikulin} to classify sublattices of index $p$ in $T_S$ up to isomorphism, and we count the number of lattices in each isomorphism class. This is the content of Sections~\ref{ss: set-up}--\ref{ss: summary}. Our main result in this direction is Theorem~\ref{thm: lattices clean}, showing there are three or four classes of $p$-torsion subgroups of $\Br(S)$, according to whether $p\nmid d$ or not, respectively.  We expect that each class of subgroups is associated to a geometric construction for $p$-torsion Brauer elements, like in the case of $2$-torsion.  Indeed, the lattice theory already suggests a strong connection between certain $p$-torsion classes on K3 surfaces of degree two and higher degree K3 surfaces or special cubic fourfolds. We explore these connections in Sections~\ref{ss: mukai duals} and~\ref{ss: special 4-folds} following Mukai~\cite{mukai87} and building on Hassett~\cite{h00}, respectively.  \\

\noindent {\bf Geometric realization of Mukai dualities.}
Having classified $p$-torsion elements of the Brauer group, we next look for geometric realizations as Brauer-Severi varieties.  In the third case of van Geemen's analysis of $2$-torsion Brauer elements on degree two K3 surfaces, the Brauer element on $S$ comes from the Fano variety of maximal isotropic subspaces inside the quadrics defining the associated degree eight K3 surface $X$. We describe how $S$ can also be interpreted as a Mukai moduli space of stable sheaves on $X$ (Lemma~\ref{lem: moduli}) and how the Brauer element is the obstruction to the fineness of this moduli space (Lemma~\ref{obstruction}). This example then admits a vast generalization: given a K3 surface $X$ of degree $2dp^2$, there exists a `Mukai dual' K3 surface $S$ given by a moduli space of stable sheaves on $X$, and a $p$-torsion Brauer element on $S$ obstructing the existence of a universal sheaf. In some low degree cases, including the case $d=1$ and $p=2$ above, this Mukai duality can be realized as projective duality, and the Brauer element can be realized geometrically as a Brauer-Severi variety. In Section~\ref{ss: 18to2} we describe the case $d=1$ and $p=3$, showing that the $3$-torsion Brauer element on $S$ comes from the Fano variety of cubic surface scrolls inside a certain net of Fano fourfolds containing $X$. In Section~\ref{ss: 16to4} we describe the case $d=2$ and $p=2$ as an instance of projective duality, though we leave as an open question the geometric realization of the resulting $2$-torsion Brauer element.  The Mukai dualities we discuss have been studied before~\cite{mukai87,ik13,kr10,ir05,ir07}, although we hope that our exposition will be useful to arithmetic geometers. \\

\noindent {\bf Explicit obstructions to weak approximation.}
Returning to the third case in van Geemen's classification of $2$-torsion Brauer elements on K3 surfaces of degree two, we construct an explicit K3 surface $S$ of degree two, together with a transcendental $2$-torsion element $\alpha \in \Br(S)$ that obstructs weak approximation; see Theorem~\ref{thm:counterexample}. We are able to compute an obstruction by interpreting $\alpha$ as the even Clifford algebra of the discriminant cover $S \to \P^2$ of a given net of quadrics in $\P^5$, following Auel, Bernardara and Bolognesi~\cite{abb11}. We use elementary properties of Clifford algebras to represent $\alpha$ as a product of two quaternion algebras over the function field $k(S)$. Along the way we prove a curious result (Corollary~\ref{cor:continuity}), which explains why it may be difficult to use elements of the form $\alpha$ to obstruct the Hasse principle (see Section~\ref{ss: hp?} as well).

It would be interesting to use the construction of Section~\ref{ss: 18to2} to build a K3 surface $S$ (of degree $2$) with an obstruction to the Hasse principle arising from a $3$-torsion in $\Br(S)$.  At present, no examples like this are known to exists, and recent work of Ieronymou and Skorobogatov naturally raises this problem; see the discussion after Corollary~1.3 of~\cite{is13}.\\

\noindent {\bf Acknowledgements.} The authors thank Asher Auel, Brendan Hassett, Danny Krashen, Alexander Kuznetsov, Sukhendu Mehrotra, and Ronald van Luijk for several discussions on this work. We are grateful to the referees for their careful review of the manuscript; their suggestions greatly improved the exposition of the paper. We also thank the American Institute for Mathematics, Palo Alto, for hosting the workshop ``Brauer groups and obstruction problems: moduli spaces and arithmetic'' and for travel funding. The second and fourth authors were supported by the NSF under grant numbers DMS-1206309 and DMS-1103659/DMS-1352291, respectively.\\

\section{Lattice gymnastics}

Let $S$ be a complex, projective K3 surface with N\'eron-Severi group $\mathrm{NS}(S)$ isomorphic to $\Z h$.  The intersection form makes the singular cohomology group ${\rm H}^2(S,\Z)$ into a lattice.  Write $T_S = \mathrm{NS}(S)^\perp$ for the transcendental lattice of $S$, and let $p$ be a prime number. By \S2.1 of~\cite{vangeemen05}, a nontrivial element $\alpha \in \Br(S)[p]$ gives rise to a surjective homomorphism $\alpha\colon T_S \to \Z/p\Z$.  The kernel of this homomorphism is a sublattice of $T_S$ of index $p$.  Conversely, a sublattice $\Gamma\subset T_S$ of index $p$ determines a subgroup $\langle \alpha\rangle \subseteq \Br(S)$ of order $p$.  Accordingly, we write $\Gamma_{\langle\alpha\rangle} = \Gamma$ for such a sublattice. In \S9 of~\cite{vangeemen05}, van Geemen classifies the isomorphism types of sublattices of index $2$ in $T_S$. He shows that there are three isomorphism types, and for each type he offers an auxiliary variety, together with a geometric construction that takes the auxiliary variety and recovers the original K3 surface $S$ together with a Brauer-Severi bundle over $S$ corresponding to the unique nontrivial element of $\langle\alpha\rangle \subset \Br(S)[2]$.

One might hope that for odd $p$, each isomorphism type of $\Gamma_{\langle\alpha\rangle}$ has an associated geometric construction that could be used for arithmetic applications.  Thus, it is of interest to classify sublattices $\Gamma_{\langle\alpha\rangle}\subset T_S$ of odd prime index $p$ up to isomorphism.  Our strategy is to generalize the proofs of Propositions~3.3 and~9.2 in~\cite{vangeemen05}.

\subsection{Set-up}
\label{ss: set-up}

Let $\left(\Gamma,(\cdot\,,\cdot)\right)$ be a \defi{lattice}, i.e., a free $\Z$-module of finite rank, together with a nondegenerate integral symmetric bilinear form $(\cdot\,,\cdot)$. We write $O(\Gamma)$ for the group of orthogonal transformations of $\Gamma$. Denote by $\Gamma^*$ the dual lattice $\Hom(\Gamma,\Z)$; the bilinear form on $\Gamma$ can be extended $\Q$-bilinearly to $\Gamma^*$. We embed $\Gamma \subseteq \Gamma^*$ via the map
\[
\gamma \mapsto [\phi_\gamma\colon \Gamma \to \Z,\ \ \delta \mapsto (\delta,\gamma)]
\]
The \defi{discriminant group} $d(\Gamma)$ of $\Gamma$ is $\Gamma^*/\Gamma$; it is a finite abelian group whose order is the \defi{discriminant} of $\Gamma$. A lattice is \defi{unimodular} if its discriminant group is trivial. If $\Gamma$ is an \defi{even lattice}, i.e., $(\gamma,\gamma) \in 2\Z$ for all $\gamma \in \Gamma$, then there is a quadratic form
\[
q\colon d(\Gamma) \to \Q/2\Z \qquad x + \Gamma \mapsto (x,x) \bmod 2\Z,
\]
called the \defi{discriminant form} of $\Gamma$. One also obtains a symmetric bilinear form 
\[
b\colon d(\Gamma) \times d(\Gamma) \to \Q/\Z,
\]
which is characterized by the identity
\[
q(x + y) - q(x) - q(y) \equiv 2b(x,y) \bmod 2\Z.
\]
Nikulin showed in~Corollary 1.13.3 of \cite{nikulin} that an even indefinite lattice whose rank exceeds (by at least 2) the minimal number of generators of $d(\Gamma)$ is determined by its rank, signature and discriminant quadratic form. We will use this fact in what follows, without explicitly mentioning it every time.  

We write $d(\Gamma)_p$ for the $p$-Sylow subgroup of $d(\Gamma)$, and $q_p$ for the restriction of $q$ to this subgroup. By Proposition~1.2.2 of~\cite{nikulin} there is an orthogonal decomposition $q = \bigoplus_p q_p$ as $p$ runs over prime numbers dividing the order of $d(\Gamma)$.

Let $S$ be a complex projective K3 surface. By \S1 of~\cite{torelli}, we can write
\[
{H}^2(S , \Z) \cong U_1 \oplus U_2 \oplus U_3 \oplus E_8(-1)^2  =: \Lambda_{\mathrm K3},
\]
where the $U_i$ are hyperbolic planes (i.e., even unimodular lattices of signature $(1,1)$), and $E_8(-1)$ is the unique negative definite even unimodular lattice of rank eight. In particular, $\Lambda_{\mathrm K3}$ is even, unimodular, and has signature $(3,19)$. If $\mathrm{NS}(S) = \Z h$, with $h^2 = 2d > 0$, then by Theorem~1.1.2 of~\cite{nikulin}, the inclusion $\mathrm{NS}(S)=\Z h \hookrightarrow \Lambda_{\mathrm K3}$ is unique up to isometry, and therefore we may assume that
\[
\mathrm{NS}(S) = \Z h \cong \Z(1,d)\hookrightarrow U_1\hookrightarrow U_1\oplus \Lambda'=\Lambda_{\mathrm K3},
\]
where $\Lambda'=U_2\oplus U_3\oplus E_8(-1)^2$.  Let $v=(1,-d)\in U_1$, so that $v^2=-2d$.  Then
\[
T_S\cong \Z v\oplus \Lambda' \cong \langle -2d\rangle\oplus \Lambda'.
\]

\subsection{Discriminant groups of $p$-torsion Brauer classes}

We begin by analyzing the homomorphism $\alpha:T_S \to {\Z}/p\Z$ associated to a nonzero element $\alpha \in \Br(S)[p]$.  Since the lattice $\Lambda'$ is unimodular, and hence self-dual, there is a $\lambda_\alpha \in \Lambda'$, whose class in $\Lambda'/p\Lambda'$ is uniquely determined, such that the homomorphism $\alpha$ can be expressed as
\begin{eqnarray*}
\alpha:T_S &\to& {\Z}/p\Z, \\
zv+\lambda'&\mapsto& z\ia+\langle \lambda_\alpha,\lambda'\rangle \bmod p,
\end{eqnarray*}
for some integer $\ia$, which we may assume is in the range $0\leq \ia \leq p-1$.
If $\ia\neq 0$, we write $\ia^{-1}$ for the inverse of $\ia$ modulo $p$ in the range $1\leq \ia^{-1} \leq p-1$. Since $\alpha$ and $\ia^{-1}\alpha$ have the same kernel $\Ga$, and since the kernel determines the subgroup $\langle\alpha\rangle \subseteq \Br(S)[p]$, replacing $\alpha$ with $\ia^{-1}\alpha$, we may assume that $\ia = 1$.

Define $\ca \in \Z$ by $\lambda_\alpha^2={-}2\ca$; the class of $\ca$ modulo $p$ is uniquely determined by $\alpha$.  The lattice $\Lambda'$ is even, unimodular and has signature $(2,18)$.  Applying Theorem~1.1.2 of \cite{nikulin}, we conclude that any embedding $\lambda_\alpha\Z \hookrightarrow \Lambda'$ is unique up to isometry.  Therefore, without loss of generality, we  assume that
\[
\lambda_\alpha=(1,-c_\alpha)\in U_2\hookrightarrow U_2\oplus U_3\oplus E_8(-1)^2=\Lambda'.
\]
Let $\Lambda''=U_3\oplus E_8(-1)^2\subset \Lambda'$, and let $\Ga=\ker(\alpha)$.  We compute  
\begin{eqnarray*}
\Ga&=&\{zv+(a,b)+\lambda''\mid z \in \Z, (a,b)\in U_2, \lambda'' \in \Lambda'', \textrm{ and }z\ia-a\ca+b\equiv 0\bmod p\}\\
&=&\{zv+(a,kp-z\ia+a\ca)+\lambda'' \mid z,a,k \in \Z, \lambda'' \in \Lambda''\}\\
&=&\{z(v+(0,-\ia))+a(1,\ca)+k(0,p)+\lambda''\mid z,a,k \in \Z, \lambda'' \in \Lambda''\}.
\end{eqnarray*}
Hence, for fixed $d$ and $p$, the lattice $\Ga$ (and by extension, its discriminant form), is determined by the values $\ia$ and $\ca$ modulo $p$.  Let $M_\alpha$ be the rank three lattice with Gram matrix 
\[
\left(\begin{array}{ccc}-2d&-\ia&0\\-\ia&2\ca&p\\0&p&0\end{array}\right).
\]
Our computation shows that $\Ga \cong M_\alpha \oplus \Lambda''$.  Unimodularity of $\Lambda''$ implies that $\Ga$ and $M_\alpha$ have isomorphic discriminant groups and isometric discriminant forms.  When we need to, we will set $\Ga=\Gamma_{\ia,\ca}$.

\begin{thm}
\label{thm: lattices}
Let $S$ be a complex projective K3 surface of degree $2d$ such that $\NS(S)\cong \Z h$, and write $T_S := \NS(S)^\perp \subset {\rm H}^2(S,\Z)$ for its transcendental lattice. Let $p$ be a prime number, and let $\alpha \in \Br(S)[p]$ be nontrivial, with associated index $p$ sublattice $\Ga=\Gamma_{\ia,\ca} \subset T_S$.
\begin{enumerate}
\item  If $\ia =0$, then
\[\disc \cong \left\{\begin{array}{ll}
				{\Z} /2d\Z\oplus \Z /p^2\Z & \textrm{ if } p\nmid\ca \textrm{ and $p$ is odd},\\
				{\Z} /2d\Z\oplus \Z /p\Z\oplus \Z /p\Z & \textrm{ if } p\mid\ca \textrm{ or } p = 2.
			\end{array}
			\right.\]

\item If $\ia = 1$, then
\[\disc \cong \left\{\begin{array}{ll}
				{\Z} /2dp^2\Z & \textrm{ if } p\nmid(1+ 4\ca d), \\
				{\Z} /2dp\Z\oplus \Z /p\Z & \textrm{ if } p\mid(1+ 4\ca d).
			\end{array}
			\right.\]
\end{enumerate}
\end{thm}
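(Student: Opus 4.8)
The plan is to reduce the entire statement to a single cokernel computation and then read off invariant factors. Since $\Ga \cong M_\alpha \oplus \Lambda''$ with $\Lambda''$ unimodular, we have $\disc \cong d(M_\alpha)$. Writing $G$ for the Gram matrix of $M_\alpha$ displayed above, the inclusion $M_\alpha \hookrightarrow M_\alpha^*$ is given in dual bases by $G$, so $d(M_\alpha)\cong \coker\!\big(G\colon \Z^3\to\Z^3\big)$. Expanding $\det G$ along the bottom row gives $|\det G| = 2dp^2$, so $|\disc| = 2dp^2$ in every case and the only real content is the group structure. For this I would invoke the standard fact that the cokernel of an integer matrix $A$ has invariant factors $d_k = D_k/D_{k-1}$, where $D_k$ is the greatest common divisor of the $k\times k$ minors of $A$ (and $D_0 = 1$).

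In the case $\ia = 0$, the matrix $G$ is block diagonal, with a $1\times1$ block $(-2d)$ and a $2\times2$ block $\left(\begin{smallmatrix} 2\ca & p\\ p & 0\end{smallmatrix}\right)$; hence $d(M_\alpha)\cong \Z/2d\Z \oplus \coker\!\left(\begin{smallmatrix} 2\ca & p\\ p & 0\end{smallmatrix}\right)$. The $2\times 2$ block has determinant $-p^2$ and greatest common divisor of entries $\gcd(2\ca, p)$. This gcd equals $1$ exactly when $p$ is odd and $p\nmid \ca$, in which case the block contributes $\Z/p^2\Z$; otherwise it equals $p$ and the block contributes $\Z/p\Z\oplus\Z/p\Z$. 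This recovers part (1).

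In the case $\ia = 1$, the $(1,2)$-entry of $G$ is the unit $-1$, so $D_1 = 1$ and $d(M_\alpha)$ needs only two generators. I would use this unit as a pivot and clear its row and column by integral row and column operations, reducing $G$ to a $1\times1$ block $(-1)$ together with $B = \left(\begin{smallmatrix} -(1+4\ca d) & p\\ -2dp & 0\end{smallmatrix}\right)$, so that $d(M_\alpha)\cong \coker(B)$. Here $\det B = 2dp^2$ and the greatest common divisor of the entries of $B$ is $\gcd(1+4\ca d,\, p)$, since $p$ divides both $p$ and $2dp$. When $p\nmid(1+4\ca d)$ this gcd is $1$ and $\coker(B)$ is cyclic of order $2dp^2$; when $p\mid(1+4\ca d)$ the invariant factors are $p$ and $2dp$ (and $p\mid 2dp$), giving $\Z/p\Z\oplus \Z/2dp\Z$. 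This is part (2).

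The computation is elementary, and the step I expect to demand the most care is the bookkeeping of the factors of $2$ rather than any genuine difficulty. The entry $2\ca$ is precisely what forces $p=2$ into the $p\mid\ca$ branch of part (1), because $\gcd(2\ca,2)=2$ regardless of $\ca$; dually, the quantity $1+4\ca d$ is always odd, so in part (2) the second branch is automatically vacuous when $p=2$ and no separate clause is needed there. The remaining thing to verify is that these gcd computations pin down the full Smith normal form and not merely the order, but that is immediate from the determinantal-divisor description recalled above.
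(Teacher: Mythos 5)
Your argument is correct, and it takes a genuinely different route from the paper's. You identify $\disc\cong d(M_\alpha)$ with $\coker(G)$ for the Gram matrix $G$ and read off the invariant factors from the determinantal divisors $D_k$; the key computations check out: in case $\ia=0$ the block $\left(\begin{smallmatrix}2\ca&p\\p&0\end{smallmatrix}\right)$ has $D_1=\gcd(2\ca,p)$ and $D_2=p^2$, and in case $\ia=1$ pivoting on the unit entry $-1$ does reduce $G$ to $(-1)\oplus\left(\begin{smallmatrix}-(1+4\ca d)&p\\-2dp&0\end{smallmatrix}\right)$, whose $D_1=\gcd(1+4\ca d,p)$ and $D_2=2dp^2$ give exactly the stated dichotomy (and your observation that $1+4\ca d$ is odd correctly explains why part (2) needs no separate $p=2$ clause). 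The paper instead parametrizes $M_\alpha^*/M_\alpha$ directly, solving the congruences $\left((A,B,C),(x,y,z)\right)\in 2dp^2\Z$ to produce explicit elements $v_1,\dots,v_5$ of the discriminant group and arguing about their orders and the subgroups they generate. Your Smith-normal-form approach is more systematic and arguably cleaner as a proof of the group structure alone; what it does not deliver is the explicit generators $v_1,\dots,v_5$ and, crucially, their values under the discriminant quadratic form $q$, which the paper reuses heavily in Proposition~\ref{prp: cyclic case}, Lemmas~\ref{lem: p not div d} and~\ref{lem: p^2 divides d}, and Proposition~\ref{prp: cyclic case bis}. If you wanted your proof to substitute for the paper's, you would still need to track the images of explicit dual vectors through your row and column operations to recover that data.
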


\begin{proof} Each element in $\disc\cong d(M_\alpha) = M_\alpha^*/M_\alpha$ is represented by a $\gamma \in M^*_\alpha$ satisfying $2dp^2\gamma\in M_\alpha$.  In other words, we can write $\gamma=\frac{\gamma'}{2dp^2}$, with $\gamma' \in M_\alpha$. Therefore, we represent elements of $d(M_\alpha)$ as triples $\gamma'=(A,B,C) \in M_\alpha$ such that $\left( (A,B,C),(x,y,z)\right)\in 2dp^2\Z$ for all $x,y,z \in \Z$. Taking $(x,y,z)$ to be $(1,0,0)$, $(0,1,0)$ and $(0,0,1)$, in turn, we see this happens if and only if there exist some $k_0$, $k_1$, $k_2 \in \Z$ such that
\begin{eqnarray*}
A&=&-p(\ia k_2{+}pk_0)\\
B&=&2dpk_2\\
C&=&2dpk_1-\ia^2k_2-p\ia k_0-4\ca dk_2
\end{eqnarray*}

 \noindent
 {\bf Case 1:} $\ia =0$.  In this case the equations for $A,\,B,\,C$ reduce to
 \begin{eqnarray*}
A&=&{-}p^2k_0\\
B&=&2dpk_2\\
C&=&2dpk_1-4\ca dk_2
\end{eqnarray*}
 for some $k_0,k_1,k_2 \in \Z$.  In particular, the triples
 \[v_1:=\frac{(p^2,0,0)}{2dp^2},\,\,v_2:=\frac{(0,2dp,-4\ca d)}{2dp^2} \textrm{ and } v_3:=\frac{(0,0,2dp)}{2dp^2}\]
 represent non-trivial elements of $d(M_\alpha)$.  If $p$ is odd, then the elements $v_1$ and $v_2$ generate subgroups of respective orders $2d$ and $p^2/\gcd(\ca,p)$, and these subgroups intersect trivially.  If $p\nmid\ca$, this shows that $d(M_\alpha) \cong \Z/2d\Z\oplus \Z/p^2\Z$.  If $p\mid\ca$ then $v_1$, $v_2$ and $v_3$ are independent elements that generate subgroups of respective orders $2d$, $p$ and $p$, showing that $d(M_\alpha) \cong \Z/2d\Z\oplus \Z/p\Z\oplus \Z/p\Z$. If $p = 2$, then $v_2$ has order two, and thus $\disc \cong \Z/2d\Z\oplus \Z/2\Z\oplus \Z/2\Z$.

 \smallskip
 \noindent
 {\bf Case 2:} $\ia = 1$.  Let $k_2 = {-1}$, and $k_0 = k_1 = 0$. Then
 \[v_4:=\frac{(p,-2dp,1+4\ca d)}{2dp^2}\]
is an element of $d(M_\alpha)$. Because of its first component, $v_4$ generates a subgroup of order divisible by $2dp$, hence $d(M_\alpha)$ is isomorphic to either $\Z/2dp^2\Z$ or $\Z/2dp\Z\oplus \Z/p\Z$.  Therefore, if $p\nmid(1+4\ca d)$, then $(2dp)v_4$ is not trivial in $d(M_\alpha)$, and hence $d(M_\alpha) \cong \Z/2dp^2\Z$.  On the other hand, if $p\mid(1 +4\ca d)$ then $v_4$ and 
\[
v_5 := \frac{1}{2dp^2}(0,0,2dp)
\] 
generate subgroups, of order $2dp$ and $p$ respectively, which intersect trivially.  Therefore, $d(M_\alpha) \cong \Z/2dp\Z\oplus \Z/p\Z$.
 \end{proof}
 
\subsection{Isomorphism classes of lattices}

Our next task is to determine when the lattices appearing in Theorem~\ref{thm: lattices} are isomorphic.  We do this by comparing their discriminant forms.  We begin by comparing lattices with cylic discriminant groups.  Throughout this section, we retain the notation of Theorem~\ref{thm: lattices}.

\begin{prp}
\label{prp: cyclic case}
Let $p$ be an odd prime.  Consider the lattices $\Ga$ in Theorem~\ref{thm: lattices} for which $\disc \cong {\Z}/2dp^2{\Z}$ is a cyclic group, generated by an element $v$. Write $q\colon \disc \to \Q/2{\Z}$ for the discriminant form of $\Ga$.
\begin{enumerate}
\item[(i)] If $p\mid d$, then all such lattices are isomorphic.
\item[(ii)] If $p\nmid d$, then there are two isomorphism classes of lattices.  The isomorphism type of $\Ga$ depends only on whether $-2dp^2q(v)$ is a square modulo $p$ or not.
\end{enumerate}
\end{prp}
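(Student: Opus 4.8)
The plan is to reduce the classification to a comparison of discriminant forms, and then to an elementary question about squares in a cyclic group. First I would invoke Nikulin's Corollary~1.13.3 as recorded above: each $\Ga$ here is an even lattice of rank $21$ and signature $(2,19)$ whose discriminant group $\disc\cong\Z/2dp^2\Z$ is cyclic, so its minimal number of generators is $1$, and $21$ exceeds this by $20\ge 2$. Hence $\Ga$ is determined up to isometry by its signature and discriminant form, and two of the lattices under consideration are isomorphic if and only if their discriminant forms are. This converts the problem into classifying the forms $q$ on $\Z/2dp^2\Z$ arising from different values of $\ca$.

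Next I would compute $q$ explicitly on the generator $v$, which I take to be the class $v_4=(p,-2dp,1+4\ca d)/(2dp^2)$ from the proof of Theorem~\ref{thm: lattices}; in the present case it has order $2dp^2$, so it generates. A short Gram-matrix computation, using that the numerator of $v_4$ pairs to zero against the first two basis vectors, gives $q(v)\equiv -(1+4\ca d)/(2dp^2)\bmod 2\Z$, so that $-2dp^2 q(v)\equiv 1+4\ca d\bmod p$, matching the quantity in the statement. Since every automorphism of a cyclic group is multiplication by a unit, the form attached to $\ca$ and that attached to $\ca'$ are isomorphic precisely when there is a unit $u$ modulo $2dp^2$ with $u^2(1+4\ca'd)\equiv 1+4\ca d\bmod 4dp^2$; equivalently, when the ratio $(1+4\ca d)(1+4\ca'd)^{-1}$ is a square in $(\Z/4dp^2\Z)^\times$. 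This formulation also makes clear that the condition does not depend on the chosen generator.

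I would then test this squareness one prime at a time. The key observation is that $1+4\ca d\equiv 1\bmod 4d$, because $4\ca d$ is a multiple of $4d$. Consequently, for every prime $\ell\ne p$ dividing $4dp^2$ the full $\ell$-part already divides $4d$, so the ratio is $\equiv 1$ there and is automatically a square. Thus the only condition that can distinguish the forms lives at $p$: for odd $p$ the ratio is a square in $(\Z/4dp^2\Z)^\times$ if and only if it is a quadratic residue modulo $p$, residues lifting to higher powers by Hensel's lemma.

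Splitting into the two cases then finishes the argument. If $p\mid d$ then $4\ca d\equiv 0\bmod p$, so $1+4\ca d\equiv 1\bmod p$ for every $\ca$, the ratio is a residue modulo $p$ as well, and all such lattices are isomorphic, proving (i). If $p\nmid d$ then $4d$ is a unit modulo $p$, so $1+4\ca d$ realizes both a nonzero residue and a nonzero non-residue as $\ca$ varies; the ratio is a square exactly when $1+4\ca d$ and $1+4\ca'd$ lie in the same class modulo $p$, yielding precisely two isomorphism classes, distinguished by whether $1+4\ca d\equiv -2dp^2 q(v)$ is a square modulo $p$, proving (ii). I expect the main obstacle to be the prime-by-prime bookkeeping at the divisors of $4d$ — in particular confirming that the prime-to-$p$ part of the discriminant form never contributes extra isomorphism classes and that the prime $2$ causes no trouble — with everything else following formally from the cyclic structure and Nikulin's theorem.
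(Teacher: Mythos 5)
Your overall strategy is the same as the paper's: invoke Nikulin's Corollary~1.13.3 to reduce the classification to a comparison of discriminant forms, compute $q$ on a generator of the cyclic discriminant group, and decide when the two values differ by a unit square modulo $4dp^2$ by working one prime at a time with Hensel's lemma and the Chinese remainder theorem. Your computation $q(v_4)=-(1+4\ca d)/(2dp^2)$ and the reduction of the squareness condition to the single prime $p$ agree with the paper's, and part (i) is complete as you have written it, since for $p\mid d$ the cyclic case forces $\ia=1$.

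There is, however, a gap in part (ii): when $p\nmid d$ the lattices with cyclic discriminant group are not all of the form $\Gamma_{1,\ca}$. By Theorem~\ref{thm: lattices}, the lattices $\Gamma_{0,\ca}$ with $p\nmid\ca$ have $\disc\cong\Z/2d\Z\oplus\Z/p^2\Z$, which is cyclic of order $2dp^2$ precisely because $p\nmid d$; these must also be classified, and your chosen generator $v_4$ is not even an element of their discriminant groups (it arises only in Case 2, $\ia=1$, of the proof of Theorem~\ref{thm: lattices}). For these lattices the paper uses the generator $v_1+v_2=(p^2,2dp,-4\ca d)/(2dp^2)$, for which $-2dp^2\,q(v_1+v_2)=p^2+4\ca d$. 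Your ``key observation'' that the relevant ratio is $\equiv 1\bmod 4d$ then fails when comparing a $\Gamma_{0,\ca}$ lattice with a $\Gamma_{1,c_{\alpha'}}$ lattice: the ratio is $\equiv p^2\bmod 4d$ rather than $\equiv 1$. It is still a unit square at every prime $\ell\neq p$ dividing $4d$ (and at $2$), so the conclusion --- that isomorphism is governed solely by the quadratic character of $-2dp^2q(v)$ modulo $p$ --- survives, but this case has to be stated and checked; as written, your argument only classifies the sublattices with $\ia=1$.
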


\begin{rmk}
The analogous proposition when $p = 2$ is handled by van Geemen in Proposition~9.2 of~\cite{vangeemen05}.
\end{rmk}

\begin{proof}
Suppose first that $p\mid d$. It follows from Theorem~\ref{thm: lattices} and its proof that $\ia = 1$, $p\nmid(1 + 4\ca d)$, and $v_4 = \frac{1}{2dp^2}(p,-2dp,1+4\ca d)$ is a generator for $d(M_\alpha) \cong \disc$. Then
\[
q(v_4) = -\frac{1}{2dp^2}(1 + 4\ca d),
\]
and so two lattices $\Ga = \Gamma_{1,c_\alpha}$ and $\Gamma_{\langle\alpha'\rangle} = \Gamma_{1,c_{\alpha'}}$ are isomorphic if there exists $x \in \left({\Z}/2dp^2{\Z}\right)^\times$ such that
\begin{equation}
\label{eq:congruence}
(1 + 4\ca d) \equiv x^2(1 + 4c_{\alpha'}d) \bmod{m},
\end{equation}
where $m = 4dp^2$.  Such an $x$ exists if and only if~\eqref{eq:congruence} has a solution when $m = p$.  Indeed, if the latter congruence has a solution, then so does~\eqref{eq:congruence} for all $m = p^n$ with $n > 1$, by Hensel's lemma, and if $q\neq p$ is an odd prime dividing $d$, then~\eqref{eq:congruence} has a solution for all $m = q^n$ with $n > 0$, again by Hensel's lemma (the case $n = 1$ being trivial since $q\mid d$). Finally, if $2\nmid d$, then~\eqref{eq:congruence} clearly has a solution when $m = 4$, and if on the other hand $2\mid d$, then~\eqref{eq:congruence} has a solution when $m = 8$, and thus for any $m= 2^n$ with $n > 2$, by another application of Hensel's lemma. Putting all this together using the Chinese remainder theorem, we obtain a solution for~\eqref{eq:congruence} for $m = 4dp^2$, as claimed.  Since $p\mid d$, it is clear that~\eqref{eq:congruence} has a solution when $m = p$. This proves (i).

Next, assume that $p\nmid d$, and let $\Ga = \Gamma_{\ia,c_\alpha}$ and $\Gamma_{\langle\alpha'\rangle} = \Gamma_{i_{\alpha'},c_{\alpha'}}$ be two lattices with cyclic discriminant group.  By~Theorem~\ref{thm: lattices} and its proof, we may assume that the discriminant group of $\Ga$  is generated by either $v_4$ or
\[
v_1 + v_2 = \frac{(p^2,2dp,-4\ca d)}{2dp^2},
\]
and likewise for $\Gamma_{\langle\alpha'\rangle}$. We computed $q(v_4)$ above; now note that
\[
q(v_1 + v_2) = -\frac{1}{2dp^2}(p^2 + 4\ca d).
\]
Thus, we see that $-2dp^2q(v_1 + v_2)$  and $-2dp^2q(v_4)$ are integers not divisible by $p$. Write $v$ and $v'$ for the generators of $\disc$ and $d(\Gamma_{\langle\alpha'\rangle})$, respectively. Then $\Ga$ and $\Gamma_{\langle\alpha'\rangle}$ are isomorphic if and only if the congruence
\begin{equation}
\label{eq:congruence2}
-2dp^2q(v) \equiv -2dp^2q(v')x^2 \bmod{m}
\end{equation}
has a solution when $m = 4dp^2$. Arguing as in (i), this is equivalent to~\eqref{eq:congruence2} having a solution when $m = p$.
\end{proof}

Suppose next that $p \nmid d$ and that the discriminant group of a lattice $\Ga$ is not cyclic.  By Theorem~\ref{thm: lattices}, we have $d(\Ga) \cong {\Z}/2d{\Z} \oplus {\Z}/p{\Z} \oplus {\Z}/p{\Z}$, and there are two possible lattices with this discriminant group, characterized by the value of $\ia$ and $c_\alpha$.  These two lattices are in fact isomorphic, as the following lemma shows.

\begin{lem}
\label{lem: p not div d}
Let $p$ be an odd prime such that $p \nmid d$. There is a unique lattice $\Ga$, up to isomorphism, whose discriminant group $\disc$ is not cyclic.  Moreover, in this case we have $\disc \cong {\Z}/2d{\Z} \oplus {\Z}/p{\Z}\oplus {\Z}/p{\Z}$.
\end{lem}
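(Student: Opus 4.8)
The plan is to reduce the statement to an equality of discriminant quadratic forms and to exploit the fact that all the relevant lattices sit inside the fixed lattice $T_S$ as sublattices of index $p$. By Theorem~\ref{thm: lattices} and its proof, a lattice $\Ga$ with non-cyclic discriminant group and $p\nmid d$ arises in exactly one of two ways: either $\ia=0$ with $p\mid\ca$ (so I may take $\ca=0$), or $\ia=1$ with $p\mid(1+4\ca d)$; in the latter case the congruence $4\ca d\equiv-1\bmod p$ pins down $\ca\bmod p$ uniquely, since $4d$ is invertible mod $p$. Thus there are at most two candidate lattices, and because $p\nmid 2d$ the Chinese remainder theorem rewrites the group $\Z/2dp\Z\oplus\Z/p\Z$ of Case~2 as $\Z/2d\Z\oplus\Z/p\Z\oplus\Z/p\Z$, so both candidates have discriminant group $\Z/2d\Z\oplus\Z/p\Z\oplus\Z/p\Z$. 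Since each $\Ga$ is even, indefinite of signature $(2,19)$ and rank $21$, while its discriminant group needs at most three generators, Corollary~1.13.3 of~\cite{nikulin} applies and it suffices to prove that the two candidates have isomorphic discriminant quadratic forms.

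Next I would split the discriminant form into local pieces using the orthogonal decomposition $q=\bigoplus_\ell q_\ell$ of Proposition~1.2.2 of~\cite{nikulin}. The prime-to-$p$ part is forced to agree for both candidates: each $\Ga$ is a sublattice of $T_S$ of index $p$, so for every prime $\ell\neq p$ the inclusion becomes an isometry after tensoring with $\Z_\ell$, whence the $\ell$-parts of $\disc$ and of $d(T_S)\cong\Z/2d\Z$ coincide as quadratic forms. Because $p\nmid 2d$, this shows the entire prime-to-$p$ component of the discriminant form is independent of the candidate and equals the discriminant form of $\langle-2d\rangle$. The problem therefore collapses to comparing the $p$-parts $q_p$, which live on $(\Z/p\Z)^2$.

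For the $p$-part I would use that, for odd $p$, a nondegenerate quadratic form on $(\Z/p\Z)^2$ valued in $\Q/2\Z$ is determined up to isomorphism by its discriminant in $\F_p^\times/(\F_p^\times)^2$, exactly as for quadratic forms over the field $\F_p$. I would then compute $q_p$ directly in each case. In Case~1 with $\ca=0$ the lattice $M_\alpha$ is $\langle-2d\rangle\oplus\left(\begin{smallmatrix}0&p\\p&0\end{smallmatrix}\right)$, whose $p$-part is the hyperbolic form with Gram matrix $\tfrac1p\left(\begin{smallmatrix}0&1\\1&0\end{smallmatrix}\right)$ and discriminant $-1$. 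In Case~2 I would take the $p$-Sylow generators $2d\,v_4$ and $v_5$ coming from the proof of Theorem~\ref{thm: lattices}, and, substituting $1+4\ca d=pt$, obtain a Gram matrix $\tfrac1p\left(\begin{smallmatrix}-2dt&-2d\\-2d&0\end{smallmatrix}\right)$ with determinant $-4d^2\equiv-1\bmod(\F_p^\times)^2$. Since both $p$-parts have discriminant $-1$, they are isomorphic, and combined with the matching prime-to-$p$ parts the two discriminant forms agree.

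The main obstacle is the explicit evaluation of $q_p$ in Case~2: one must carry the generator $2d\,v_4$ through the Gram matrix of $M_\alpha$ and use the relation $1+4\ca d\equiv0\bmod p$ to simplify, then recognize $-4d^2$ as $-1$ times a square. Everything else is bookkeeping. As a sanity check — and an alternative route that sidesteps the Case~2 computation — one can observe that both candidates have the same signature $(2,19)$, the same discriminant group of order $2dp^2$, and the same prime-to-$p$ discriminant form; Milgram's formula for the Gauss sum of a discriminant form then forces their $p$-parts to have equal discriminant, hence to be isomorphic.
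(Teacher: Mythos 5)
Your proof is correct, but it reaches the conclusion by a genuinely different route than the paper's. The paper also reduces to the two candidates $\Gamma_{0,0}$ and $\Gamma_{1,c_{\alpha'}}$ with $p\mid(1+4c_{\alpha'}d)$ and splits $q$ into its $\Z/2d\Z$-part and its $p$-Sylow part, but it then verifies the isometry by brute force: it produces an $x\in(\Z/2d\Z)^\times$ with $q(xv_1)\equiv q'(pv_4)$ via Hensel's lemma and the Chinese remainder theorem (as in Proposition~\ref{prp: cyclic case}), and it writes down an explicit $2\times 2$ matrix $\phi=\bigl(\begin{smallmatrix}1&0\\ -d(1+4c_{\alpha'}d)/p&-2d\end{smallmatrix}\bigr)$ witnessing the isometry of the $p$-parts in the bases $\{2dv_4,v_5\}$ and $\{v_2,v_3\}$. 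You replace both steps with softer arguments: for the prime-to-$p$ part you observe that an index-$p$ sublattice of $T_S$ has $\Gamma\otimes\Z_\ell=T_S\otimes\Z_\ell$ for all $\ell\neq p$, so that component of the discriminant form is forced to equal that of $\langle-2d\rangle$ for \emph{any} candidate, with no congruence to solve; and for the $p$-part you invoke the classification of nondegenerate finite quadratic forms on $(\Z/p\Z)^2$ ($p$ odd) by their discriminant in $\F_p^\times/(\F_p^{\times})^2$ and check that both forms have discriminant $-1$ (your Gram matrices $\frac1p\bigl(\begin{smallmatrix}0&1\\1&0\end{smallmatrix}\bigr)$ and $\frac1p\bigl(\begin{smallmatrix}-2dt&-2d\\-2d&0\end{smallmatrix}\bigr)$ are what I get as well, with determinants $-1/p^2$ and $-4d^2/p^2$). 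What your approach buys is less computation and an argument that makes it transparent \emph{why} only the $p$-local data can distinguish the candidates; what it costs is an appeal to the classification of finite quadratic $p$-forms (or to Milgram, in your alternative), whereas the paper's explicit isometry is self-contained given only Nikulin's Corollary~1.13.3. Both your main route and your Milgram-based shortcut are sound.
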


\begin{proof}
We have already shown that $\disc \cong {\Z}/2d{\Z} \oplus {\Z}/p{\Z}\oplus {\Z}/p{\Z}$. Let $\Gamma$ be the lattice $\Ga=\Gamma_{0,0}$ determined by $\ia = 0$ and $c_\alpha = 0$, and let $\Gamma'$  be the lattice $\Gamma_{\langle \alpha'\rangle}=\Gamma_{i_{\alpha'},c_{\alpha'}}$ determined by $i_{\alpha'} = 1$ and $c_{\alpha'}$ with $p\mid (1 + 4c_{\alpha'} d)$. Write $q$ and $q'$ for their respective discriminant quadratic forms.  We show that $q$ and $q'$ are isometric. Using the notation of the proof of Theorem~\ref{thm: lattices}, we may assume that
\[
d(\Gamma) = \langle v_1\rangle \oplus \langle v_2, v_3 \rangle\qquad\textrm{and}\qquad
d(\Gamma') = \langle pv_4 \rangle \oplus \langle 2dv_4, v_5\rangle.
\]
Recall that $v_1$ and $pv_4$ have order $2d$ in their respective discriminant groups, while $v_2$, $v_3$, $2dv_4$ and $v_5$ each have order $p$.  By Proposition~1.2.1 of~\cite{nikulin}, we know that
\[
q = q\big|_{{\Z}/2d{\Z}} \oplus q\big|_{d(\Gamma)_p}\qquad\textrm{and}\qquad
q' = q'\big|_{{\Z}/2d{\Z}} \oplus q'\big|_{d(\Gamma')_p}
\]
Thus, to prove that $q$ and $q'$ are isometric, it suffices to exhibit an $x \in ({{\Z}/2d{\Z}})^\times$ such that
\[
q(xv_1) \equiv q'(pv_4) \pmod {2{\Z}},
\]
and an isomorphism $\phi\colon d(\Gamma')_p \xrightarrow{\sim} d(\Gamma)_p$ of ${\Z}/p{\Z}$-vector spaces such that
\begin{equation}
\label{eq: pSylow}
q(\phi(v)) \equiv q'(v) \pmod {2{\Z}}\qquad\textrm{for all }v \in d\left(\Gamma'\right)_p.
\end{equation}
To prove $x$ exists, argue using Hensel's lemma and the Chinese remainder theorem, as in Proposition~\ref{prp: cyclic case}. Using $\{v_2,v_3\}$ and $\{2dv_4,v_5\}$ as bases for $d(\Gamma)_p$ and $d(\Gamma')_p$, one can check that the transformation
\[
\phi = \begin{pmatrix}
1 & 0 \\
-\frac{d(1 + 4c_{\alpha'}d)}{p} & -2d
\end{pmatrix}
\]
is a witness to~\eqref{eq: pSylow}, where $c_{\alpha'} \in \{0,\dots,p-1\}$ is the constant for the lattice $\Gamma'$ such that $p\mid(1 + 4c_{\alpha'}d)$.
\end{proof}

Finally, we treat the case when $p \mid d$.  It cannot also be the case that $p \mid (1 + 4c_\alpha d)$, so the lattices in Theorem~\ref{thm: lattices} with $\ia = 1$ and $p \mid (1 + 4c_\alpha d)$ cannot occur.  This leaves three possible distinct discriminant groups for lattices $\Ga$.  First, isomorphism classes of lattices with cyclic discriminant group are handled in Proposition~\ref{prp: cyclic case}. Second, there is only one lattice, up to isomorphism, with discriminant group ${\Z}/2d{\Z} \oplus {\Z}/p{\Z} \oplus {\Z}/p{\Z}$, characterized by $\ia = 0$ and $c_\alpha = 0$.  Thus, it remains to understand when two lattices with discriminant group ${\Z}/2d{\Z} \oplus {\Z}/p^2{\Z}$ are isomorphic to each other.

\begin{lem}
\label{lem: p^2 divides d}
Let $p$ be an odd prime such that $p \mid d$. Then there  are two lattices $\Ga$, up to isomorphism, with discriminant group $\disc \cong {\Z}/2d{\Z} \oplus {\Z}/p^2{\Z}$.
\end{lem}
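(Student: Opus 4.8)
The plan is to first pin down exactly which lattices are in play, then reduce the isomorphism question to isometry of discriminant forms, and finally carry out that comparison at the prime $p$. Since $p\mid d$ we have $1+4\ca d\equiv 1\pmod p$, so the case $\ia=1$, $p\mid(1+4\ca d)$ of Theorem~\ref{thm: lattices} never occurs; consulting the theorem, the lattices with $\disc\cong\Z/2d\Z\oplus\Z/p^2\Z$ (a non-cyclic group, since $p\mid d$, and hence not among those of Proposition~\ref{prp: cyclic case}) are precisely the $\Ga=\Gamma_{0,\ca}$ with $\ia=0$ and $p\nmid\ca$. As in Lemma~\ref{lem: p not div d}, isomorphism of two such lattices is equivalent, by Corollary~1.13.3 of~\cite{nikulin}, to isometry of their discriminant quadratic forms, which I would compute explicitly. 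Using the generators $v_1=\frac{(p^2,0,0)}{2dp^2}$ and $v_2=\frac{(0,2dp,-4\ca d)}{2dp^2}$ of $d(M_\alpha)\cong\disc$ from the proof of Theorem~\ref{thm: lattices}, a direct calculation with the Gram matrix of $M_\alpha$ at $\ia=0$ gives $v_1\perp v_2$, $q(v_1)=-\frac{1}{2d}$, and $q(v_2)=-\frac{2\ca}{p^2}$; thus $q$ is the diagonal form $\langle -\tfrac{1}{2d}\rangle\oplus\langle -\tfrac{2\ca}{p^2}\rangle$ on $\langle v_1\rangle\oplus\langle v_2\rangle\cong\Z/2d\Z\oplus\Z/p^2\Z$.

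Next I would pass to the $p$-primary part. By Proposition~1.2.2 of~\cite{nikulin} we have $q=\bigoplus_\ell q_\ell$, and since $\langle v_2\rangle$ is $p$-primary while $q(v_1)$ is independent of $\ca$, the prime-to-$p$ part of $q$ is the same for every lattice under consideration; hence two of them are isomorphic if and only if their $p$-parts $q_p$ are isometric. Writing $d=p^a d'$ with $p\nmid d'$, the $p$-Sylow subgroup of $\disc$ is $\Z/p^a\Z\oplus\Z/p^2\Z$, with orthogonal generators $w_1:=2d' v_1$ of order $p^a$ and $v_2$ of order $p^2$; here $q(w_1)=-\frac{2d'}{p^a}$ does not depend on $\ca$, so all the dependence of $q_p$ on the lattice is concentrated in $q(v_2)=-\frac{2\ca}{p^2}$.

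Finally I would classify these $p$-forms as $\ca$ runs over units modulo $p$. For the isomorphic direction I would exhibit an explicit isometry: if $c_{\alpha'}/\ca$ is a square modulo $p$, then by Hensel's lemma it is a square $t^2$ modulo $p^2$, and the map fixing $w_1$ and sending $v_2\mapsto t v_2$ preserves orthogonality and satisfies $q(t v_2)=t^2 q(v_2)\equiv -\frac{2c_{\alpha'}}{p^2}$, hence is an isometry. For the converse I would invoke Nikulin's classification of finite quadratic forms on $p$-groups (\S1.8 of~\cite{nikulin}): for odd $p$ the homogeneous blocks of $q_p$ are isometry invariants, each determined by its rank and its determinant modulo squares, and the determinant of the exponent-$p^2$ block equals the class of $\ca$ in $\F_p^\times/(\F_p^\times)^2$ up to fixed unit factors; so a non-square ratio $c_{\alpha'}/\ca$ forces non-isometric forms. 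This yields exactly two isomorphism classes, realized by choosing $\ca$ a quadratic residue or a non-residue modulo $p$. The step I expect to need the most care is the comparison at $p$ when $a=2$: then $\Z/p^a\Z\oplus\Z/p^2\Z=(\Z/p^2\Z)^2$ is homogeneous, the exponent-$p^2$ block has rank two with determinant proportional to $d'\ca$, and one must apply the classification to a rank-two form on a homogeneous $p$-group rather than to a single cyclic block in order to conclude that the square class of $\ca$ remains the only surviving invariant.
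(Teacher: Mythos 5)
Your argument is correct, and most of it runs parallel to the paper's proof: you identify the relevant lattices as the $\Gamma_{0,\ca}$ with $p\nmid\ca$, reduce via Corollary~1.13.3 of~\cite{nikulin} to isometry of discriminant forms, split off the prime-to-$p$ part (which is independent of $\ca$), and prove the ``square ratio $\Rightarrow$ isometric'' direction by the same explicit diagonal isometry $v_2\mapsto tv_2$ with $t$ produced by Hensel's lemma. Where you genuinely diverge is the converse. The paper passes to the associated bilinear forms and shows that an isometry forces an automorphism $A$ of $\Z/p^e\Z\oplus\Z/p^2\Z$ (writing $d=p^ed_0$) with $A^t\,\mathrm{diag}(-2d_0/p^e,-2\ca/p^2)\,A\equiv\mathrm{diag}(-2d_0/p^e,-2c_{\alpha'}/p^2)\pmod{\Z}$, and then extracts the congruence $\ca\equiv c_{\alpha'}c^2$ by a three-way case analysis on $e$: for $e\geq 3$ the matrix $A$ may be taken lower triangular, for $e=1$ upper triangular, and for $e=2$ one multiplies by $p^2$ and takes determinants in $\GL_2(\Z/p^2\Z)$. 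You instead invoke the classification of nondegenerate finite quadratic forms on odd $p$-groups (\S1.8 of~\cite{nikulin}): uniqueness of the Jordan splitting makes the scale-$p^2$ homogeneous block, and hence its determinant modulo squares, an isometry invariant, and that determinant is a fixed unit times $\ca$ (times $d_0\ca$ when $e=2$, where the block has rank two). This buys a uniform treatment of all $e$ at the price of a heavier citation, whereas the paper's matrix computation is more self-contained but needs exactly the case bookkeeping your appeal to \S1.8 absorbs; your explicit flag that $e=2$ requires the rank-two homogeneous case is the right point of care and corresponds precisely to the paper's determinant trick there.
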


\begin{proof}
We show that two lattices $\Ga$ and $\Gamma_{\langle\alpha'\rangle}$ with discriminant group ${\Z}/2d{\Z} \oplus {\Z}/p^2{\Z}$ are isomorphic if and only if $c_\alpha/c_{\alpha'}$ is a quadratic residue modulo $p$. Write $d = p^e\cdot d_0$, where $p \nmid d_0$, so that 
\[
{\Z}/2d{\Z} \oplus {\Z}/p^2{\Z} \cong {\Z}/2d_0{\Z}\oplus {\Z}/p^e{\Z} \oplus {\Z}/p^2{\Z}.
\]
Using the notation of the proof of Theorem~\ref{thm: lattices}, we may assume that
\[
d(\Ga) = \langle p^ev_1\rangle \oplus \underbrace{\langle 2d_0v_1, v_2 \rangle}_{d(\Ga)_p}\qquad\textrm{and}\qquad
d(\Gamma_{\langle\alpha'\rangle}) = \langle p^ev_1\rangle \oplus \underbrace{\langle 2d_0v_1, v_2'\rangle}_{d(\Gamma_{\langle\alpha'\rangle})_p},
\]
where $2dp^2v_2' = (0,2dp,-4c_{\alpha'} d)$. Recall that $p^ev_1$ and $2d_0v_1$ have orders $2d_0$ and $p^e$, respectively, while $v_2$ and $v_2'$, each have order $p^2$. 

As in the proof of Lemma~\ref{lem: p not div d}, the quadratic forms $q_\alpha$ and $q_{\alpha'}$ associated to our lattices are isomorphic if and only if there is an $\phi \in \Aut({\Z}/p^e{\Z}\oplus {\Z}/p^2{\Z})$ such that
\[
(q_\alpha)_p\left(\phi(v)\right) \equiv (q_{\alpha'})_p(v) \pmod{2{\Z}}\qquad \textrm{for all }v\in d(\Gamma_{\langle\alpha'\rangle})_p
\]
The symmetric matrices associated to $(q_\alpha)_p$ and $(q_{\alpha'})_p$ are respectively equal to
\[
\begin{pmatrix}
-\frac{2d_0}{p^e} & 0 \\
0 & -\frac{2c_\alpha}{p^2}
\end{pmatrix}
\qquad\textrm{and}\qquad
\begin{pmatrix}
-\frac{2d_0}{p^e} & 0 \\
0 & -\frac{2c_{\alpha'}}{p^2}
\end{pmatrix}.
\]

First, suppose that $c_\alpha/c_{\alpha'}$ is a quadratic residue modulo $p$. Then we can take
\[
\phi = 
\begin{pmatrix}
1 & 0 \\
0 & y
\end{pmatrix},
\]
where $y \in {\Z}/p^2{\Z}$ satisfies
\[
-\frac{2c_\alpha}{p^2} \equiv -\frac{2c_{\alpha'}y^2}{p^2} \pmod{2{\Z}}.
\]
Such a $y$ exists by Hensel's lemma and because $c_\alpha/c_{\alpha'}$ is a quadratic residue modulo $p$.

Now suppose that $(q_\alpha)_p$ and $(q_{\alpha'})_p$ are isometric. Then their associated bilinear forms must be isomorphic, and there is an $A \in \Aut({\Z}/p^e{\Z}\oplus {\Z}/p^2{\Z})$, which can be represented by a $2\times 2$ matrix, such that
\begin{equation}
\label{eq:matrixcong}
A^t \cdot 
\begin{pmatrix}
\frac{-2d_0}{p^e} & 0 \\
0 & \frac{-2c_\alpha}{p^2}
\end{pmatrix}
\cdot A \equiv 
\begin{pmatrix}
\frac{-2d_0}{p^e} & 0 \\
0 & \frac{-2c_{\alpha'}}{p^2}
\end{pmatrix}
\pmod{{\Z}}
\end{equation}
If $e \geq 3$ then we may assume that $A$ has the form $\displaystyle \begin{pmatrix}
a & 0 \\
b & c
\end{pmatrix}
$, in which case the $(2,2)$ entry in the congruence~\eqref{eq:matrixcong} reads
\begin{equation}
\label{eq: it's a QR!}
-\frac{2c_\alpha}{p^2} \equiv -\frac{2c_{\alpha'}c^2}{p^2} \pmod{{\Z}},
\end{equation}
and we conclude that $c_\alpha/c_{\alpha'}$ is a quadratic residue modulo $p$. If $e = 2$, then $A \in \GL_2({\Z}/p^2{\Z})$. Multiplying~\eqref{eq:matrixcong} by $p^2$ and taking determinants we arrive at the same conclusion. Finally, if $e = 1$, then we may assume that $A$ has the form $\displaystyle \begin{pmatrix}
a & b \\
0 & c
\end{pmatrix}
$, in which case the $(2,2)$ entry in the congruence~\eqref{eq:matrixcong} is again given by~\eqref{eq: it's a QR!}, and $c_\alpha/c_{\alpha'}$ is a quadratic residue modulo $p$.
\end{proof}

\subsection{Counting lattices}

We continue using the notation of Theorem~\ref{thm: lattices}; in particular, $S$ denotes a complex projective K3 surface with $\NS(S) \cong \Z h$. The purpose of this section is to count, for each nontrivial $\langle\alpha\rangle \subset \Br(S)[p]$, the number of lattices in each isomorphism class of $\Ga \subset T_S$.

Since $\Ga \subseteq T_S$ has index $p$, we know that
\[
pT_S \subseteq \Ga \subseteq T_S
\]
and thus
\[
H_\alpha := \Ga/pT_S \subseteq T_S/pT_S \cong \F_p^{21}.
\]
We may consider $H_\alpha$ as a hyperplane in $\F_p^{21}$. Conversely, to every hyperplane in $T_S/pT_S$, we may associate an index $p$ sublattice $\Ga$ of $T_S$.  Thus, the projective space $\P\left((T_S/pT_S)^*\right)$ parametrizes index $p$-sublattices of $T_S$. Using the identification $T_S \cong \Z v \oplus \Lambda'$, and setting $v^* = -v/2d$, the intersection form on $T_S$ allows us to identify 
\[
\P\left((T_S/pT_S)^*\right) \cong \P\left(\F_p\cdot v^* \oplus \Lambda'/p\Lambda'\right).
\]
The set of index $p$ lattices $\Ga$ that have $\ia = 0$ are then identified with
\[
\P^{19}(\F_p) = \P\left(\Lambda'/p\Lambda'\right) \subseteq \P\left(\F_p\cdot v^*\oplus \Lambda'/p\Lambda'\right)
\]
while the set of lattices with $\ia = 1$ can be identified with the distinguished open affine
\begin{align*}
\mathbb{A}^{20}(\F_p) = \Lambda'/p\Lambda' &\subseteq \P\left(\F_p\cdot v^*\oplus \Lambda'/p\Lambda'\right) \\
\lambda_\alpha &\mapsto [v^* + \lambda_\alpha]
\end{align*}
Define the quadratic form 
\begin{align*}
\mathfrak{Q} \colon \Lambda'/p\Lambda' &\to \F_p \\
\lambda &\mapsto -\frac{\langle\lambda,\lambda\rangle}{2} \bmod p.
\end{align*}
A lattice $\Ga$ is determined by the quantities $\ia$ and $c_\alpha$.  Recall that $-2c_\alpha = \langle \lambda_\alpha,\lambda_\alpha\rangle$, so for example, lattices with $\ia = 1$ and a prescribed  $c_\alpha$ correspond to $\F_p$-points on the affine quadric $\{ \mathfrak{Q}(x) = c_\alpha \}\subset \mathbb{A}^{20}$.  The following well-known lemma will help us count the required points on quadrics.  We include a proof here for completeness. 

\begin{lem}
\label{lem: counts}
Let $p$ be an odd prime, and let $Q$ be a nondegenerate, homogeneous quadratic form in $2n$ variables over $\F_p$.  Assume that $(-1)^n\textup{disc}(Q) \in \F_p^{\times 2}$. Write $f(n)$ for the number of zeroes of $Q$ (including the trivial zero). For $0\neq i \in \F_p$, let $g_i(n)$ denote the number of solutions to the equation $Q = i$. Then 
\[
f(n) = p^{n-1}(p^n+p-1)\qquad\textrm{and}\qquad
g_i(n) = p^{n-1}(p^n-1).
\]
In particular, $g(n) := g_i(n)$ is independent of $i$. 
\end{lem}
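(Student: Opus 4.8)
The plan is to count solutions to the quadratic equations $Q=0$ and $Q=i$ over $\F_p$ by first putting $Q$ into a standard diagonal form. Since $p$ is odd and $Q$ is nondegenerate in $2n$ variables, I would diagonalize $Q$ over $\F_p$, writing $Q \sim a_1 x_1^2 + \cdots + a_{2n} x_{2n}^2$; the discriminant condition $(-1)^n\disc(Q)\in\F_p^{\times 2}$ tells us precisely which of the two equivalence classes of nondegenerate forms $Q$ lies in (there are exactly two, distinguished by whether the discriminant is a square). The cleanest route is to reduce to the \emph{hyperbolic} normal form: a nondegenerate form in $2n$ variables with $(-1)^n\disc(Q)$ a square is equivalent over $\F_p$ to the split form $\sum_{j=1}^{n} y_j z_j$ (equivalently $\sum (u_j^2 - v_j^2)$). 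Establishing this reduction is the first key step, and it rests on the standard classification of quadratic forms over finite fields, which I would either cite or prove by the usual argument that $a x^2 + b y^2 = c$ always has a solution over $\F_p$.

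With $Q$ in the split form $\sum_{j=1}^n y_j z_j$, the counts become a clean induction on $n$. For the single hyperbolic summand $y_1 z_1$ over $\F_p$: the number of zeros is $2p-1$ (either $y_1=0$, giving $p$ choices for $z_1$, or $z_1=0$, giving $p$ choices for $y_1$, minus the double-counted origin), and the number of solutions to $y_1 z_1 = i$ for $i\neq 0$ is $p-1$ (choose $y_1\neq 0$ freely, then $z_1$ is determined). This gives the base case $f(1)=2p-1 = p^0(p^1+p-1)$ and $g_i(1)=p-1=p^0(p^1-1)$, matching the claimed formulas. The inductive step splits $Q = Q' + y_n z_n$ where $Q'$ is the split form in $2(n-1)$ variables, and conditions on the value taken by $Q'$: writing $Q = w$ forces $y_n z_n = w - Q'$, so
\[
f(n) = f(n-1)\cdot(2p-1) + \big(p^{2(n-1)} - f(n-1)\big)\cdot(p-1),
\]
since when $Q'=0$ we need $y_nz_n=0$ (giving $2p-1$ completions) and when $Q'\neq 0$ we need $y_nz_n$ to equal a fixed nonzero value (giving $p-1$ completions). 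A parallel recursion holds for $g_i(n)$, organized by whether $Q'$ equals $i$, equals $0$, or takes one of the other nonzero values.

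Substituting the inductive hypotheses $f(n-1)=p^{n-2}(p^{n-1}+p-1)$ and $g(n-1)=p^{n-2}(p^{n-1}-1)$ into these recursions and simplifying should yield the desired closed forms. The step most likely to cause friction is not any individual count but keeping the bookkeeping consistent: the recursion for $g_i(n)$ requires knowing how many points satisfy $Q'=i$ versus $Q'$ equal to some \emph{other} nonzero value, so I must carry the fact that $g_i(n-1)$ is independent of $i$ through the induction—this is exactly the ``in particular'' clause of the statement, and it is what makes the recursion close. I would therefore prove the $i$-independence of $g_i$ simultaneously with the formulas, noting that in the split form the change of variables $z_j \mapsto i z_j$ visibly sends the level set $\{Q=1\}$ to $\{Q=i\}$ bijectively, which gives $i$-independence directly and lets me write $g(n)$ unambiguously throughout. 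With that in hand the two recursions decouple cleanly and the final simplification is routine algebra.
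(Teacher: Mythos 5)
Your proposal is correct and follows essentially the same route as the paper: reduce to the split form $\sum y_jz_j$ via the discriminant hypothesis, then run the recursion $f(n)=f(n-1)(2p-1)+(p^{2n-2}-f(n-1))(p-1)$ with base cases $f(1)=2p-1$ and $g_i(1)=p-1$. The one bookkeeping worry you raise is unnecessary: the recursion for $g_i(n)$ only needs to distinguish $Q'=i$ from $Q'\neq i$ (giving $2p-1$ versus $p-1$ completions), so it closes for each fixed $i$ separately in terms of $g_i(n-1)$ alone, and $i$-independence falls out of the identical recurrences rather than needing to be carried as an extra inductive hypothesis--though your scaling argument establishing it directly is also fine.
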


\begin{proof}
The hypothesis on $p$ and $Q$ imply that $Q$ is isometric to the form $Q\cong x_1x_2+\cdots+x_{2n-1}x_{2n}$. We then note that $f(n)$ satisfies the recurrence relation
\[
f(n)=f(n-1)(2p-1)+(p^{2n-2}-f(n-1))(p-1),
\]
because, informally,
\[
\begin{split}
f(n)&=\#(\textrm{zeroes of }x_1x_2 + \cdots + x_{2n-3}x_{2n-2})\cdot \#(\textrm{zeroes of }x_{2n-1}x_{2n}) \\
&\quad +\#(\textrm{nonzero values of }x_1x_2 + \cdots x_{2n-3}x_{2n-2})\cdot\#(\textrm{zeroes of }x_{2n-1}x_{2n}-i)),
\end{split}
\]
where $i\in \mathbb F_p^\times$.  The initial condition $f(1) = 2p - 1$ then allows us to determine $f(n)$, and we obtain the claimed quantity.  The function $g_i(n)$ obeys the same recurrence relation, but with initial condition $g_i(1) = p - 1$.
\end{proof}

We begin by counting lattices in isomorphism classes with cyclic discriminant group.  The following proposition is a complement to Proposition~\ref{prp: cyclic case}.

\begin{prp}
\label{prp: cyclic case bis}
Let $p$ be an odd prime.  Consider the lattices $\Ga$ in Theorem~\ref{thm: lattices} for which $\disc \cong {\Z}/2dp^2{\Z}$ is a cyclic group, generated by an element $v$. Write $q\colon \disc \to \Q/2{\Z}$ for the discriminant form of $\Ga$.
\begin{enumerate}
\item[(i)] If $p\mid d$ then there are $p^{20}$ such lattices, all isomorphic to each other.
\item[(ii)] If $p\nmid d$, then there are two isomorphism classes of lattices.  The isomorphism class corresponding to the case where $-2dp^2q(v)$ is a square modulo $p$ has $\frac{1}{2} p^{10} \left(p^{10}+1\right)$ lattices.  The other class has $\frac{1}{2} p^{10} \left(p^{10}-1\right)$ lattices.
\end{enumerate}
\end{prp}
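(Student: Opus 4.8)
The plan is to identify exactly which lattices $\Ga$ have cyclic discriminant group, to split them into the two isomorphism classes of Proposition~\ref{prp: cyclic case} according to a square/nonsquare invariant, and then to count each class by realizing it as a union of fibers of the quadratic form $\mathfrak{Q}$, to which Lemma~\ref{lem: counts} applies. Part (i) is immediate from this viewpoint: when $p\mid d$ the group $\Z/2d\Z\oplus\Z/p^2\Z$ is not cyclic, so by Theorem~\ref{thm: lattices} no lattice with $\ia=0$ qualifies, whereas $p\mid d$ forces $1+4\ca d\equiv1\pmod p$, so \emph{every} lattice with $\ia=1$ has $\disc\cong\Z/2dp^2\Z$. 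These are parametrized bijectively by $\mathbb A^{20}(\F_p)=\Lambda'/p\Lambda'$, giving exactly $p^{20}$ of them, and they are mutually isomorphic by Proposition~\ref{prp: cyclic case}(i).

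For part (ii) I would first check that $\mathfrak{Q}$ meets the hypothesis of Lemma~\ref{lem: counts}. Since $\Lambda'=U_2\oplus U_3\oplus E_8(-1)^2$ is unimodular of determinant $1$, the bilinear form attached to $\mathfrak{Q}=-\tfrac12\langle\cdot,\cdot\rangle$ has discriminant $1$, and with $2n=20$ we have $(-1)^{10}\cdot1\in\F_p^{\times2}$. Hence $\mathfrak{Q}$ is split and Lemma~\ref{lem: counts} yields $f:=p^9(p^{10}+p-1)$ for the number of zeros and $g:=p^9(p^{10}-1)$ for the number of points in each nonzero fiber $\{\mathfrak{Q}=i\}$. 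I would then translate the isomorphism invariant into the language of $\ca$: from the proof of Proposition~\ref{prp: cyclic case}, the generator $v_4$ (case $\ia=1$) satisfies $-2dp^2q(v_4)=1+4\ca d$, while the generator $v_1+v_2$ (case $\ia=0$) satisfies $-2dp^2q(v_1+v_2)\equiv \ca d\pmod p$ up to squares. So in both cases the ``square class'' of Proposition~\ref{prp: cyclic case}(ii) is detected by a Legendre symbol, namely of $1+4\ca d$ when $\ia=1$ and of $\ca d$ when $\ia=0$.

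The counting then proceeds case by case. For $\ia=1$ the substitution $t=1+4\ca d$ is a bijection of $\F_p$, the fiber $\{\mathfrak{Q}=\ca\}$ has $g$ points if $\ca\neq0$ and $f$ points if $\ca=0$, and the non-cyclic lattices are exactly those with $t=0$. Because $\ca=0$ corresponds to the square value $t=1$, the $f$-fiber is assigned to the square class; summing over the remaining square (resp. nonsquare) values of $t$ contributes $f+\tfrac{p-3}{2}g$ (resp. $\tfrac{p-1}{2}g$) lattices. For $\ia=0$ the lattices are the projective points of $\P(\Lambda'/p\Lambda')$ with $\mathfrak{Q}\neq0$; since $\lambda\mapsto s\lambda$ scales $\mathfrak{Q}$ by $s^2$, all $p-1$ affine representatives of a given projective point lie over a single square class of values, so each isomorphism class receives $\tfrac1{p-1}\cdot\tfrac{p-1}{2}g=\tfrac{g}{2}$ points. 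Adding the two contributions gives $f+\tfrac{p-2}{2}g$ and $\tfrac{p}{2}g$, which simplify to $\tfrac12p^{10}(p^{10}+1)$ and $\tfrac12p^{10}(p^{10}-1)$ respectively.

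The main obstacle is bookkeeping rather than conceptual. One must reconcile the two a priori different invariants, $1+4\ca d$ for $\ia=1$ and $\ca d$ for $\ia=0$, into a single square/nonsquare dichotomy, and handle the inhomogeneous zero fiber $\ca=0$ in the affine $\ia=1$ count, whose size $f$ differs from the generic $g$ and which must be placed in the square class. Verifying that the invariant $\ca d\bmod\F_p^{\times2}$ is constant along each line in the projective $\ia=0$ count, while routine, is the other point requiring care.
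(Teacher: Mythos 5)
Your proposal is correct and follows essentially the same route as the paper: part (i) via the bijection with $\mathbb{A}^{20}(\F_p)$, and part (ii) by splitting on $\ia$, detecting the square class through $1+4\ca d$ (for $\ia=1$) and $\ca d$ (for $\ia=0$), and counting fibers of $\mathfrak{Q}$ with Lemma~\ref{lem: counts}; your totals $f+\tfrac{p-2}{2}g$ and $\tfrac{p}{2}g$ agree with the paper's $C_s$ and $C_n$. Your explicit verification of the hypothesis $(-1)^{10}\mathrm{disc}(\mathfrak{Q})\in\F_p^{\times 2}$ and of the well-definedness of the square class on projective points are details the paper leaves implicit, but they do not change the argument.
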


\begin{proof} We have discussed the isomorphism classes of $\Ga$ in Proposition~\ref{prp: cyclic case}, so we focus on the lattice counts. If $p \mid d$  then $p\nmid (1+4\ca d)$. Hence $\disc$ is cyclic if and only if $\ia = 1$.  Lattices with $\ia = 1$ are in one-to-one correspondence with points in the distinguished open affine $\mathbb{A}^{20}(\F_p) = \Lambda'/p\Lambda' \subseteq \P\left(\F_p\cdot v^*\oplus \Lambda'/p\Lambda'\right)$. There are thus $p^{20}$ such lattices.

Next, suppose that $p\nmid d$.  Let us call the two isomorphism classes in part (ii) of the proposition $C_s$ and $C_n$\footnote{Here $C$ stands for cyclic, $s$ stands for $-2dp^2q(v)$ is a square modulo $p$ and $n$ stand for $-2dp^2q(v)$ is a non-square modulo $p$.}. Let $C_{s,\ia}=\{\Ga\in C_s \mid \Ga = \Gamma_{\ia,\ca} \textrm{ for some }\ca\}$ and similarly for $C_{n,\ia}$.  If $\ia=0$ then  $\disc$ is cyclic only if $p\nmid \ca$.  In this case $\disc$ is generated by $v_1+v_2$ and $-2dp^2q(v_1+v_2)=p^2+4\ca d$.  This is a square modulo $p$ if and only if $\ca d$ is a square modulo $p$.  Note that
\[
\#\{x \in \F_p^\times\mid x d \in \F_p^{\times 2}\}=(p-1)/2 \quad\textrm{and}\quad
\#\{x \in \F_p^\times\mid x d \notin \F_p^{\times 2}\}=(p-1)/2.
\]  
In particular, of the $p-1$ non-zero $\ca$'s mod $p$, there are $\displaystyle (p-1)/2$ such that  $\ca d$ is a square $\pmod p$. Therefore, using the notation of Lemma~\ref{lem: counts}, we have
\[
\#\{\lambda \in (\mathbb{A}^{20}\setminus \{0\})(\F_p) \mid \mathfrak{Q}(\lambda) \in \F_p^{\times 2}\}=\frac{p-1}{2}\cdot g(10).  
\]
Since $\ia =0$, the $\lambda$ are in $\mathbb P^{19}(\mathbb F_p)$ and  we must divide our count by $p-1$ to obtain
\[
\#C_{s,0} = \frac{p-1}{2}\cdot \frac{g(10)}{p-1}=\frac{p^{9}(p^{10}-1)}{2}.
\]
The same calculation shows $\displaystyle \#C_{n,0}=\frac{p^{9}(p^{10}-1)}{2}$.

If $\ia=1$ then $\disc$ is cyclic only if $p\nmid(1+4\ca d)$.   In this case $\disc$ is generated by $v_4$ and $-2dp^2q(v_4)=1+4\ca d$.  Since $1+4\ca d \equiv 1+4c_{\alpha'} d \pmod p$ if and only if $\ca \equiv c_{\alpha'} \pmod p$, we see that as sets $\F_p=\{1+4\ca d|\ca \in \F_p\}$.  Therefore
\[
\#\{\ca \mid 0\ne1+4\ca d\ne x^2 \textrm{ for all }x \in \F_p^\times\} = \frac{p-1}{2}.
\]
Since $\ca=0$ makes $1+4\ca d$ a square modulo $p$, we see that 
\begin{align*}
\#C_{s,1}&=f(10)+\left(\frac{p-1}{2}-1\right)g(10) =\frac{1}{2}p^9(p^{11}-p^{10}+p+1),\\
\#C_{n,1}&=\frac{p-1}{2} g(10)=\frac{1}{2}p^{9}(p^{10}-1)(p-1).
\end{align*}
Finally,
\begin{align*}
\#C_{s}&=\#C_{s,0}+\#C_{s,1}=\frac{1}{2} p^{10} \left(p^{10}+1\right),\\
\#C_{n}&=\#C_{n,0}+\#C_{n,1}=\frac{1}{2} p^{10} \left(p^{10}-1\right).
\end{align*}
\end{proof}

\begin{prp}
Suppose that $p$ is an odd prime with $p \nmid d$. There are $\displaystyle\frac{p^{20} - 1}{p - 1}$ lattices $\Ga$ in Theorem~\ref{thm: lattices} with noncyclic discriminant group.
\end{prp}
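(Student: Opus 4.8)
The plan is to identify, using Theorem~\ref{thm: lattices}, exactly which lattices $\Ga$ have noncyclic discriminant group, and then to count them directly inside the parametrization introduced in this section. Since $p \nmid d$ (so $p \nmid 2d$), the groups $\Z/2d\Z \oplus \Z/p^2\Z$ and $\Z/2dp^2\Z$ are cyclic, whereas $\Z/2d\Z \oplus \Z/p\Z \oplus \Z/p\Z$ and $\Z/2dp\Z \oplus \Z/p\Z$ are not. Reading off Theorem~\ref{thm: lattices}, a lattice $\Ga = \Gamma_{\ia,\ca}$ therefore has noncyclic discriminant group precisely in two cases: either $\ia = 0$ and $p \mid \ca$, or $\ia = 1$ and $p \mid (1 + 4\ca d)$. (By Lemma~\ref{lem: p not div d} these all lie in a single isomorphism class, but here we want the raw count.)

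Next I would translate each case into a point-count for the quadratic form $\mathfrak{Q}$, recalling that $\ca \equiv \mathfrak{Q}(\lambda_\alpha) \bmod p$. The lattices with $\ia = 0$ are parametrized by $\P^{19}(\F_p) = \P(\Lambda'/p\Lambda')$, and the condition $p \mid \ca$ cuts out the projective quadric $\{\mathfrak{Q} = 0\}$; being homogeneous of degree two, this condition is well defined on $\P^{19}$. Its $\F_p$-points number $(f(10) - 1)/(p-1)$, namely the nonzero affine zeroes of $\mathfrak{Q}$ divided by $p - 1$. The lattices with $\ia = 1$ are parametrized by the affine chart $\mathbb{A}^{20}(\F_p) = \Lambda'/p\Lambda'$ via $\lambda_\alpha \mapsto [v^* + \lambda_\alpha]$, and here $p \mid (1 + 4\ca d)$ forces $\ca \equiv -(4d)^{-1} \bmod p$, a fixed \emph{nonzero} value since $p \nmid 4d$. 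Thus these lattices correspond to the solutions of $\mathfrak{Q} = c_0$ for a single $c_0 \in \F_p^\times$, of which there are $g(10)$ by Lemma~\ref{lem: counts}. Note that the $\ia = 0$ locus (the hyperplane at infinity) and the $\ia = 1$ locus (the affine chart) are disjoint, so no lattice is counted twice.

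Finally I would assemble the total. By Lemma~\ref{lem: counts}, whose hypotheses on $\mathfrak{Q}$ hold exactly as in the proof of Proposition~\ref{prp: cyclic case bis}, we have $f(10) = p^9(p^{10} + p - 1)$ and $g(10) = p^9(p^{10} - 1)$. Adding the two contributions gives
\[
\frac{f(10) - 1}{p - 1} + g(10) = \frac{p^{19} + p^{10} - p^9 - 1}{p - 1} + (p^{19} - p^9),
\]
and clearing denominators shows the numerator collapses to $p^{20} - 1$, yielding $\frac{p^{20} - 1}{p - 1}$. The only genuine subtleties are bookkeeping ones: confirming that the noncyclic cases are exactly the two listed (which rests on $p \nmid 2d$), that the $\ia = 1$ value $c_0$ is nonzero so that Lemma~\ref{lem: counts} supplies $g(10)$ rather than $f(10)$, and that the two loci do not overlap. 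Everything else is the routine algebraic simplification above.
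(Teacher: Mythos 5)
Your proof is correct, but it takes a different route from the paper's. The paper disposes of this in one line by complementary counting: the total number of index $p$ sublattices is $\#\P^{20}(\F_p) = \frac{p^{21}-1}{p-1}$, and Proposition~\ref{prp: cyclic case bis} already shows the cyclic ones number $\frac{1}{2}p^{10}(p^{10}+1) + \frac{1}{2}p^{10}(p^{10}-1) = p^{20}$, so the difference is $\frac{p^{20}-1}{p-1}$. You instead count the noncyclic locus directly, splitting it into the projective quadric $\{\mathfrak{Q}=0\}\subset\P^{19}$ (the $\ia=0$, $p\mid\ca$ case, contributing $\frac{f(10)-1}{p-1}$ points) and the affine fiber $\{\mathfrak{Q} = -(4d)^{-1}\}$ (the $\ia=1$, $p\mid(1+4\ca d)$ case, contributing $g(10)$ points), and your identification of exactly these two cases as the noncyclic ones, your disjointness observation, and your arithmetic all check out. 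What the paper's argument buys is brevity, at the cost of depending on the earlier cyclic count; what yours buys is independence from Proposition~\ref{prp: cyclic case bis} (so it doubles as a consistency check on that count) and it matches the finer enumeration the paper carries out anyway in the subsequent $p\mid d$ proposition, where the quadric count $\frac{f(10)-1}{p-1}$ appears explicitly. Both arguments rest on the same hypotheses for Lemma~\ref{lem: counts} applied to $\mathfrak{Q}$, which you correctly flag as being the same implicit assumption made in Proposition~\ref{prp: cyclic case bis}.
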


\begin{proof}
This is clear, since there are a total of $\frac{p^{21} - 1}{p - 1}$ lattices $\Ga$ and the ones with cyclic group account for a total of $p^{20}$ lattices.  
\end{proof}

\begin{prp}
Suppose that $p$ is an odd prime with $p\mid d$. Consider lattices $\Ga$ as in Theorem~\ref{thm: lattices}. There are
\begin{itemize}
\item $\frac{1}{2}p^9(p^{10} - 1)$ lattices with $\ia = 0$, $p \nmid c_\alpha$ and $c_\alpha$ is a quadratic residue modulo $p$,
\item $\frac{1}{2}p^9(p^{10} - 1)$ lattices with $\ia = 0$, $p \nmid c_\alpha$ and $c_\alpha$ is a quadratic nonresidue modulo $p$,
\item $\frac{(p^9 + 1)(p^{10} - 1)}{p - 1}$ lattices with $\ia = 0$ and $p \mid c_\alpha$,
\item $p^{20}$ lattices with $\ia = 1$.
\end{itemize}
\end{prp}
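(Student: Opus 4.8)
The plan is to count the lattices in each of the four families directly as points of the parameter spaces, exactly as in the proof of Proposition~\ref{prp: cyclic case bis}, using Lemma~\ref{lem: counts} to enumerate the fibers of the quadratic form $\mathfrak{Q}$. The fourth bullet is immediate: since $p \mid d$ we have $1 + 4\ca d \equiv 1 \not\equiv 0 \bmod p$, so by Theorem~\ref{thm: lattices} every lattice with $\ia = 1$ has cyclic discriminant group $\Z/2dp^2\Z$, and these are in bijection with the $p^{20}$ points of the distinguished open affine $\mathbb{A}^{20}(\F_p) = \Lambda'/p\Lambda'$. It then remains to treat the three families with $\ia = 0$, which are parametrized by the $\frac{p^{20}-1}{p-1}$ points of $\P^{19}(\F_p) = \P\left(\Lambda'/p\Lambda'\right)$; on this space the lattice $\Ga$ attached to $[\lambda_\alpha]$ has invariant $\ca \equiv \mathfrak{Q}(\lambda_\alpha) \bmod p$.

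First I would note that although $\ca$ itself is not well-defined on projective space, rescaling $\lambda_\alpha \mapsto t\lambda_\alpha$ with $t \in \F_p^\times$ multiplies $\mathfrak{Q}(\lambda_\alpha)$ by $t^2$; hence the three conditions ``$p \nmid \ca$ and $\ca \in \F_p^{\times 2}$'', ``$p \nmid \ca$ and $\ca \notin \F_p^{\times 2}$'', and ``$p \mid \ca$'' are each well-defined on $\P^{19}(\F_p)$. Via Lemma~\ref{lem: p^2 divides d} and the remarks preceding it, these correspond respectively to the two isomorphism classes with discriminant group $\Z/2d\Z \oplus \Z/p^2\Z$ and to the single class with discriminant group $\Z/2d\Z \oplus \Z/p\Z \oplus \Z/p\Z$. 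The hypotheses of Lemma~\ref{lem: counts} hold for $\mathfrak{Q}$ (a nondegenerate split form in $2n = 20$ variables, as already used in Proposition~\ref{prp: cyclic case bis}), so with $n = 10$ the affine solutions of $\mathfrak{Q} = i$ number $g(10) = p^9(p^{10}-1)$ for each $i \in \F_p^\times$, and the nonzero affine solutions of $\mathfrak{Q} = 0$ number $f(10) - 1 = p^9(p^{10}+p-1) - 1$.

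Finally I would pass to projective points by dividing the affine counts by $p - 1$. Summing over the $(p-1)/2$ nonzero quadratic residues gives $\frac{1}{p-1}\cdot\frac{p-1}{2}\, g(10) = \frac{1}{2}p^9(p^{10}-1)$ lattices with $\ca$ a nonzero square, and identically for the nonsquares; the zero locus gives $\frac{f(10)-1}{p-1}$ lattices with $p \mid \ca$, where the elementary identity $f(10)-1 = (p^9+1)(p^{10}-1)$ produces the stated closed form $\frac{(p^9+1)(p^{10}-1)}{p-1}$. The substantive point, rather than any hard computation, is the one isolated in the second paragraph: that the square class of $\ca$ genuinely descends to $\P^{19}(\F_p)$ and is compatible with the isomorphism classification of Lemma~\ref{lem: p^2 divides d}, so that dividing by $p-1$ is legitimate. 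As a consistency check I would confirm that the four counts sum to $\frac{p^{21}-1}{p-1}$, the total number of index-$p$ sublattices of $T_S$, which verifies that the three $\ia = 0$ families exhaust $\P^{19}(\F_p)$.
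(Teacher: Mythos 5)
Your argument is correct and is essentially the paper's own proof: the paper counts the first two families exactly as $C_{s,0}$ and $C_{n,0}$ in Proposition~\ref{prp: cyclic case bis} (affine fiber counts of $\mathfrak{Q}$ via Lemma~\ref{lem: counts}, divided by $p-1$), identifies the third family with the $\F_p$-points of the smooth quadric $\mathfrak{Q}=0$ in $\P^{19}$, giving $\frac{f(10)-1}{p-1}$, and gets the fourth from the $p^{20}$ points of the affine chart since $p\mid d$ forces $p\nmid(1+4\ca d)$. Your added remarks --- that the square class of $\ca$ descends to $\P^{19}(\F_p)$ and that the four counts sum to $\frac{p^{21}-1}{p-1}$ --- are worthwhile elaborations but not a different method.
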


\begin{proof}
The first two types of lattices can be counted the same way we counted $C_{s,0}$ and $C_{n,0}$ in Proposition~\ref{prp: cyclic case bis}. The third type corresponds to $\F_p$-points of a smooth quadric in $\P^{19}$, of which there are $\frac{f(10) - 1}{p - 1}$.  The fourth type were counted in Proposition~\ref{prp: cyclic case bis}.
\end{proof}

\newpage

\subsection{Summary}
\label{ss: summary}

\begin{thm}
\label{thm: lattices clean}
Let $S$ be a complex projective K3 surface of degree $2d$ with N\'eron-Severi group $\NS(S)$ isomorphic to $\Z h$, and write $T_S := \langle h\rangle^\perp \subset {\rm H}^2(S,\Z)$ for its transcendental lattice. Let $p$ be an odd prime, and let $\alpha \in \Br(S)[p]$ be nontrivial, with associated index $p$ sublattice $\Ga=\Gamma_{\ia,\ca} \subset T_S$.  Write $q\colon \disc \to \Q/2{\Z}$ for the discriminant form of $\Ga$.
\begin{enumerate}
\item If $p \nmid d$ then there are three isomorphism classes of lattices $\Gamma_{\alpha}$.  They are classified in Table~\ref{ta: summary1}.
\begin{table}[ht]\renewcommand{\arraystretch}{2}
\[
\begin{tabular}{c|c|c}
$\disc$ & Distinguishing Feature & Number of $\Ga$ \\
\hline
${\Z}/2dp^2{\Z} = \langle v\rangle$ & $-2dp^2q(v) \equiv \Box \bmod p$ & $\displaystyle\frac{1}{2}p^{10}(p^{10} + 1)$\\[1ex]
\hline
${\Z}/2dp^2{\Z} = \langle v\rangle$ & $-2dp^2q(v) \not\equiv \Box \bmod p$ & $\displaystyle\frac{1}{2}p^{10}(p^{10} - 1)$\\[1ex]
\hline
${\Z}/2d{\Z} \oplus {\Z}/p{\Z}\oplus {\Z}/p{\Z}$ &  & $\displaystyle\frac{p^{20} - 1}{p - 1}$\\
\end{tabular}
\]
\caption{Sublattices $\Ga = \Gamma_{\ia,c_\alpha} \subseteq T_S$ of index $p\nmid d$. The symbol $\Box$ stands for ``a square''.}
\label{ta: summary1}
\end{table}

\item If $p \mid d$ then there are four isomorphism classes of lattices $\Gamma_{\alpha}$. They are classified in Table~\ref{ta: summary2}.

\begin{table}[ht]\renewcommand{\arraystretch}{2}
\[
\begin{tabular}{c|c|c|c}
$\disc$ & $\ia$ & $\ca\bmod p$ & Number of $\Ga$ \\
\hline
${\Z} /2d\Z\oplus \Z /p^2\Z$ & $0$ & quadratic residue & $\displaystyle\frac{1}{2}p^{9}(p^{10} - 1)$\\[1ex]
\hline
${\Z} /2d\Z\oplus \Z /p^2\Z$ & $0$ & quadratic nonresidue & $\displaystyle\frac{1}{2}p^{9}(p^{10} - 1)$\\[1ex]
\hline
${\Z} /2d\Z\oplus \Z /p\Z\oplus \Z /p\Z$ & $0$ & $0$  & $\displaystyle\frac{(p^9 + 1)(p^{10} - 1)}{p-1}$\\[1ex]
\hline
${\Z}/2dp^2{\Z}$ & $1$ & no restriction & $p^{20}$\\
\end{tabular}
\]
\caption{Sublattices $\Ga = \Gamma_{\ia,c_\alpha} \subseteq T_S$ of index $p\mid d$. }
\label{ta: summary2}
\end{table}
\end{enumerate}
\end{thm}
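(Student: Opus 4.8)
The plan is to read this off as an assembly of the classification and counting results established earlier in the section, organizing everything by the dichotomy $p\nmid d$ versus $p\mid d$. The first step is to invoke Theorem~\ref{thm: lattices} to enumerate the possible discriminant groups $\disc$ in each case; since isometric lattices have isometric discriminant forms (and in particular isomorphic discriminant groups), this already separates the index-$p$ sublattices according to the first column of each table, and what remains is to refine each block into isomorphism classes and supply the counts.

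For $p\nmid d$, Theorem~\ref{thm: lattices} leaves exactly two discriminant groups: the cyclic ${\Z}/2dp^2{\Z}$ and the noncyclic ${\Z}/2d{\Z}\oplus{\Z}/p{\Z}\oplus{\Z}/p{\Z}$. I would then apply Proposition~\ref{prp: cyclic case}(ii) to split the cyclic case into the two isomorphism classes distinguished by whether $-2dp^2q(v)$ is a square modulo $p$, and Lemma~\ref{lem: p not div d} to conclude that the noncyclic case is a single isomorphism class. The three counts then come verbatim from Proposition~\ref{prp: cyclic case bis}(ii) and from the counting proposition immediately following it, producing the three rows of Table~\ref{ta: summary1}.

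For $p\mid d$ the key preliminary observation is that $1+4\ca d\equiv 1\pmod p$, so the hypothesis $p\mid(1+4\ca d)$ of Theorem~\ref{thm: lattices} can never be met; this deletes one row and leaves exactly the three discriminant groups arising from $\ia=1$, from $\ia=0$ with $p\nmid\ca$, and from $\ia=0$ with $p\mid\ca$. Since $\ca$ only matters modulo $p$, the condition $p\mid\ca$ forces $\ca=0$, so the noncyclic group ${\Z}/2d{\Z}\oplus{\Z}/p{\Z}\oplus{\Z}/p{\Z}$ is realized by the single lattice $\Gamma_{0,0}$. I would then use Proposition~\ref{prp: cyclic case}(i) for the cyclic group (one class, count $p^{20}$, from Proposition~\ref{prp: cyclic case bis}(i)) and Lemma~\ref{lem: p^2 divides d} to split ${\Z}/2d{\Z}\oplus{\Z}/p^2{\Z}$ into two classes according to the quadratic-residue class of $\ca$ modulo $p$, reading the remaining counts off the final counting proposition. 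This yields the four rows of Table~\ref{ta: summary2}.

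Because every cited ingredient is already in hand, there is no analytic obstacle; the delicate point is purely combinatorial, namely checking that the case split is both exhaustive and mutually exclusive. The two places to be careful are the elimination of the $p\mid(1+4\ca d)$ row when $p\mid d$, and the reduction of $p\mid\ca$ to $\ca=0$. As a consistency check I would confirm that the listed counts sum to the total number of index-$p$ sublattices of $T_S$, i.e.\ the number $\tfrac{p^{21}-1}{p-1}$ of hyperplanes in $T_S/pT_S\cong\F_p^{21}$; in the first case this is the identity
\[
\tfrac{1}{2}p^{10}(p^{10}+1)+\tfrac{1}{2}p^{10}(p^{10}-1)+\tfrac{p^{20}-1}{p-1}=p^{20}+\tfrac{p^{20}-1}{p-1}=\tfrac{p^{21}-1}{p-1},
\]
and the analogous identity holds in the second case, confirming that no isomorphism class has been omitted or double counted.
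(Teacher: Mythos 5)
Your proposal is correct and matches the paper's intent exactly: Theorem~\ref{thm: lattices clean} appears in a section titled ``Summary'' with no separate proof, precisely because it is the assembly of Theorem~\ref{thm: lattices}, Proposition~\ref{prp: cyclic case}, Lemmas~\ref{lem: p not div d} and~\ref{lem: p^2 divides d}, and the three counting propositions, organized by the dichotomy $p\nmid d$ versus $p\mid d$ just as you describe. Your observations that $p\mid d$ forces $p\nmid(1+4\ca d)$ and that $p\mid\ca$ reduces to $\ca=0$ are the same case eliminations the paper makes, and your consistency check against $\tfrac{p^{21}-1}{p-1}$ is a worthwhile (and correct) addition.
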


\newpage

\subsection{Lattice theory for Mukai dual K3 surfaces}
\label{ss: mukai duals}

Let $S$ be a complex K3 surface of degree two with $\NS(S) \cong \Z h$.  In \S9 of \cite{vangeemen05}, van Geemen showed that sublattices of index two $\Gamma_{\langle\alpha\rangle}$ of $T_S$ in a particular isomorphism class naturally give rise to K3 surfaces of degree eight via a primitive embedding $\Gamma_{\langle\alpha\rangle} \hookrightarrow \Lambda_{\mathrm K3}$, using the surjectivity of the period map for K3 surfaces.  In this section, we explain a well-known generalization of this framework due to Mukai~\cite{mukai87}.

Using the notation of Section~\ref{ss: set-up}, we let $S$ denote a complex projective K3 surface of degree $2d$ with $\NS(S) \cong \Z h$, and we fix a primitive vector $v \in \H^2(S,\Z)$ such that $v^2 = -2d$ and $T_S \cong \langle-2d\rangle \oplus \Lambda'$. Then $\Gamma := \langle-2dp^2\rangle \oplus \Lambda'$ is a sublattice of index $p$ of $T_S$.  One checks that $d\left(\Gamma\right) \cong \Z/2dp^2\Z$, generated by $\frac{v}{2dp}$, from which it follows that $-2dp^2q(v) = 1$, which is a square modulo $p$.  By Theorem~\ref{thm: lattices clean}, if $p \nmid d$, then there are $\frac{1}{2}p^{10}(p^{10} + 1)$ sublattices of $T_S$ of index $p$ isomorphic to $\Gamma$, and if $p \mid d$, there are $p^{20}$ such lattices.  Up to isomorphism there is a unique primitive embedding $\Gamma \hookrightarrow \Lambda_{\textrm{K3}}$. By the surjectivity of the period map for K3 surfaces, there is a complex projective K3 surface $Y$ of degree $2dp^2$ such that $T_Y \cong \Gamma$.  Moreover, we obtain an isomorphism of rational Hodge structures $\H^2(Y,\Q) \to \H^2(S,\Q)$.

The Hodge isogenies above appear naturally in Mukai's work on moduli spaces of stable sheaves on K3 surfaces (see Section~\ref{ss: mukai set-up} below for precise definitions).  In this context, one starts with a polarized K3 surface $X$ of degree $2dp^2$ such that $\NS(X) = \Z h'$, and one defines $S := M_X(p,h',dp)$ to be the moduli space of sheaves on $X$ with Mukai vector $v' := (p,h',dp)$.  Then $S$ is a K3 surface, and by Theorem~1.5 of~\cite{mukai87}, there is an isomorphism of Hodge structures between $v'^\perp/\Z v'$ and $\H^2(S,\Z)$, which is compatible with the Mukai pairing on $v'^\perp/\Z v'$ and cup product on $\H^2(S,\Z)$. Moreover, there is a rational Hodge isometry
\[
\H^2(X,\Q) \to v'^\perp/\Z v' \otimes \Q \qquad x \mapsto \left(0,x,\frac{h'\cdot x}{p}\right)
\]
mapping $h'/p$ to $h := (0,h'/p,2d)$, which is integral in $v'^\perp/\Z v' \otimes \Q$ because $(0,h'/p,2d) - v'/p = (-1,0,d)$.  Composing the two isometries, we obtain a rational Hodge isometry 
\[
\phi\colon\H^2(X,\Q) \to \H^2(S,\Q)
\]
mapping $h'/p$ to an integral class $h \in H^2(S,\Q)$ such that $h^2 = 2d$, giving $S$ a polarization of degree $2d$.  The isometry $\phi$ induces an injection $T_X \hookrightarrow T_S$ whose image has index $p$.  As in Section~\ref{ss: set-up}, since $\NS(X) \cong \Z h'$, we have $T_X \cong \langle -2dp^2\rangle \oplus \Lambda'$, so $d\left(T_X\right) \cong \Z/2dp^2\Z$, and $-2dp^2q(u)$ is a square modulo $p$, where $q$ is the discriminant form on $d\left(T_X\right) = \langle u\rangle$.  Mukai's moduli spaces of stable sheaves therefore give geometric manifestations to the lattice theory discussed in this section.

In Section~\ref{s: Mukai}, we explain some geometric constructions realizing the Mukai duality between the surfaces $X$ and $S$ via projective dualities, and in Section~\ref{S: Failure of WA}, we use one of these constructions (the case $d = 1$ and $p = 2$) for an arithmetic application.

\subsection{Special Cubic fourfolds}
\label{ss: special 4-folds}

Continuing the theme of the previous section, we explore the connection between certain sublattices of $T_S$ of index $p$ on general K3 surfaces and special cubic fourfolds.   Geometric correspondences explaining these lattice-theoretic connections have arithmetic applications: such correspondences can yield Brauer-Severi bundles representing a generator for an order $p$ subgroup of $\Br(S)$.  This idea was exploited in~\cite{hvv11} to obtain counterexamples to weak approximation on a K3 surface, starting from a cubic fourfold containing a plane.  The results of this section are easily derived from general work of Hassett~\cite{h00} on special cubic fourfolds.  We include them here to alert the arithmetically inclined audience about a source of constructions of transcendental Brauer classes on K3 surfaces.  For example, the results of this section suggest that cubic fourfolds containing a del Pezzo surface of degree $6$ form a source of transcendental $3$-torsion elements on a K3 surface of degree $2$.  It would be very interesting to have a geometric correspondence capable of producing such a $3$-torsion element, as a Brauer-Severi bundle, starting from the special cubic fourfold.

Recall that a special cubic fourfold $Y \subseteq \P^5$ is a smooth cubic fourfold that contains a surface $T$ not homologous to a complete intersection. Let $h$ denote the hyperplane class of $\P^5$; assume that the lattice $K := \langle h^2, T\rangle \subset H^4(Y,\Z)$ is saturated.  The discriminant of $(Y,K)$ is the determinant of the Gram matrix of $K$.  The nonspecial cohomology of $(Y,K)$ is the orthogonal complement $K^\perp$ of $K$ with respect to the intersection form.  By Theorem~1.0.1 of~\cite{h00}, special cubic fourfolds $(Y,K)$ of discriminant $D$ form an irreducible divisor $\calC_D$ of the moduli space of cubic fourfolds.  In what follows, we write $K_{D}$ for the special cohomology lattice of a special cubic fourfold in $\calC_{D}$.

\begin{prp}
\label{prop:cubicfourfolds}
Let $S$ be a $K3$ surface of degree $2d$ with $\mathrm{NS}(S) = \Z h$ and $p$ a prime number.  For a nonzero $\alpha \in \Br(S)[p]$, denote by $\Ga$ its corresponding sublattice of index $p$ in $T_S$.
\begin{enumerate}
\item Suppose that $d = 1$ and $p > 3$. There is precisely one isomorphism class of lattices $\Ga \subset T_S$ such that there exist a special cubic fourfold $(Y,K)$ of discriminant $2p^2$, and an isomorphism of lattices $\Ga \cong (K^\perp)(-1)$.
\item Suppose that $p = 3$. There is an isomorphism class of lattices $\Ga \subset T_S$ such that there exist a special cubic fourfold $(Y,K)$ of discriminant $18d$, and an isomorphism of lattices $\Ga \cong (K^\perp)(-1)$ if and only if $(6,d) = 1$ and if $q$ is a prime dividing $d$, then $q \equiv 1 \bmod 3$.
\end{enumerate}
\end{prp}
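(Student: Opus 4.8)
Throughout, $p = 3$ and $D := 18d = 2dp^2$. The plan is to reduce the existence of the desired cubic fourfold to a comparison of discriminant forms and then extract the arithmetic conditions one prime at a time. First I would dispose of the existence of \emph{some} cubic fourfold of discriminant $18d$: since $18d \equiv 0 \pmod 6$ and $18d > 6$, Hassett's criterion~\cite{h00} gives $\calC_{18d} \neq \emptyset$, and the labelling lattice, hence its nonspecial cohomology $K_{18d}^\perp$, is well-defined up to isometry. Consequently the assertion is purely lattice-theoretic: there exists $(Y,K) \in \calC_{18d}$ with $\Ga \cong (K^\perp)(-1)$ if and only if $(K_{18d}^\perp)(-1) \cong \Ga$. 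Both $(K_{18d}^\perp)(-1)$ and every $\Ga$ of Theorem~\ref{thm: lattices clean} have signature $(2,19)$ and rank $21$, which exceeds the number of generators of their discriminant groups by at least two, so by Nikulin's Corollary~1.13.3 of~\cite{nikulin} the isomorphism holds exactly when the discriminant quadratic forms agree. This is where all the content lies.

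Next I would compute the discriminant form of $(K_{18d}^\perp)(-1)$. Since $18d \equiv 0 \pmod 6$, a change of basis puts the labelling lattice in the form $K_{18d} \cong \langle 3\rangle \oplus \langle 6d\rangle$. Because $K_{18d}^\perp$ is contained in the primitive cohomology $\langle h^2\rangle^\perp \subset H^4(Y,\Z)$, which is even (isometric to $U^2 \oplus E_8^2 \oplus A_2$), it is itself even, and Nikulin's theory applies to it directly; this is the correct way to bypass the fact that $H^4(Y,\Z)$ is only odd unimodular. I would then obtain $d(K_{18d}^\perp) \cong \Z/3\Z \oplus \Z/6d\Z$, the distinguished $\langle 3\rangle$ summand (with $(h^2)^2 = 3$) being what forces the prime $3$ into the analysis. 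After twisting by $(-1)$ and decomposing into $p$-Sylow pieces (Proposition~1.2.2 of~\cite{nikulin}), the discriminant group becomes $\Z/2d\Z \oplus \Z/3\Z \oplus \Z/3\Z$ when $3 \nmid d$; its cyclic $3$-part being impossible, this already excludes the two cyclic classes of Theorem~\ref{thm: lattices clean} and singles out the unique non-cyclic lattice of Lemma~\ref{lem: p not div d} as the only possible match.

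Finally I would match the forms prime by prime, assembling the pieces by the Chinese remainder theorem and Hensel's lemma as in Proposition~\ref{prp: cyclic case}. At $\ell = 2$ the two $2$-adic forms agree precisely when $d$ is odd, giving $(2,d) = 1$. At $\ell = 3$ the requirement that the $3$-part be $\Z/3\Z \oplus \Z/3\Z$ rather than cyclic forces $3 \nmid d$, and the hyperbolic form on the $3$-part of $\Ga$ (Lemma~\ref{lem: p not div d}) is isometric to the diagonal form coming from $\langle 3\rangle \oplus \langle 6d\rangle$ exactly when $d \equiv 1 \pmod 3$. At each odd prime $q \mid d$ the two forms differ by the factor $-3 = -(h^2)^2$, so they agree if and only if $-3$ is a square modulo $q$, i.e. $\left(\tfrac{-3}{q}\right) = 1$, which by quadratic reciprocity is equivalent to $q \equiv 1 \pmod 3$. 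Since imposing $q \equiv 1 \pmod 3$ for every $q \mid d$ already entails $3 \nmid d$ and $d \equiv 1 \pmod 3$, the conditions collapse to $(6,d) = 1$ together with $q \equiv 1 \pmod 3$ for all primes $q \mid d$, as claimed.

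The hard part will be the explicit computation of the discriminant quadratic form of $K_{18d}^\perp$ and its correct propagation through the twist by $(-1)$ and the $p$-Sylow decomposition. Two points demand genuine care: one must work inside the even lattice $\langle h^2\rangle^\perp$ (not with the odd lattice $K$) to legitimately identify the form, and the comparisons at $\ell = 2$ and $\ell = 3$ are quadratic rather than merely bilinear, so the square-class computations must be carried out over $\Z_2$ and $\Z_3$ rather than over $\F_2$ and $\F_3$. Once these local computations are in hand, the global conclusion is routine.
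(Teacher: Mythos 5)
Your plan for part (2) is essentially the paper's own argument, organized slightly differently. The paper cites Proposition~3.2.5 of~\cite{h00} to get $d(K_{18d}^\perp(-1)) \cong \Z/6d\Z \oplus \Z/3\Z$ with value $\frac{3}{2d}$ on the generator $u=(3,0)$ of the $\Z/2d\Z$ summand, observes (as you do) that only the non-cyclic class of Theorem~\ref{thm: lattices clean} (normalized to $\ia=0$, $\ca=0$, so $q(v_1)=-\frac{1}{2d}$) can match, and then solves $3 \equiv -x^2 \bmod 4d$; your prime-by-prime split at $\ell=2$ (forcing $d$ odd), at odd $q\mid d$ (forcing $-3$ a square mod $q$, i.e.\ $q\equiv 1 \bmod 3$), and the separate comparison of the $3$-Sylow forms $\mathrm{diag}(2d/3,-2/3)$ versus the hyperbolic form (forcing $d\equiv 1\bmod 3$, already implied) is the same computation. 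Your insistence on working inside the even lattice $\langle h^2\rangle^\perp$ rather than the odd lattice $K$ is a correct and worthwhile point of care, though the paper sidesteps it by simply quoting Hassett's computation of the discriminant form.

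The genuine gap is that part (1) of the proposition is not addressed at all, and it is not a formal repetition of part (2): there the relevant discriminant group $d(K_{2p^2}^\perp(-1))$ is \emph{cyclic}, with generator value $-\frac{4p^2-1}{6p^2}$, so the matching is against the two cyclic classes of Proposition~\ref{prp: cyclic case}, and the claim that exactly \emph{one} class occurs requires showing that $-2p^2q(u) \equiv -\tfrac{1}{3} \bmod p$ always has the same quadratic character as $-2p^2q(v)$ for a suitable $\Ga$ — which the paper does by a small case analysis on $p \bmod 3$, using that $-3$ and $\ca$ are simultaneously squares or simultaneously nonsquares mod $p$, together with the reduction (via Proposition~\ref{prp: cyclic case}) to lattices with $\ia=0$, $p\nmid\ca$. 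None of this machinery appears in your plan, so as written the proposal establishes only half of the statement.
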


\begin{rmk}
The divisor $\calC_D$ is nonempty if and only if $D > 6$ and $D \equiv 0$ or $2 \bmod 6$ (see Theorem~1.0.1 of~\cite{h00}). This implies that if $p > 3$ and $d \equiv 2 \bmod 6$, then no special cubic fourfolds have nonspecial cohomology isomorphic to a twist of an index $p$ sublattice $\Ga \subset T_S$ for a K3 surface $S$ of degree $2d$.
\end{rmk}

\begin{proof}[Proof of Proposition~\ref{prop:cubicfourfolds}]\ 

\noindent (1.) In Proposition 3.2.5 of~\cite{h00}, Hassett shows that the discriminant group $d(K_{2p^2}^\perp(-1))$ is cyclic, and a generator $u$ can be chosen so that its value for the discriminant quadratic form is $-\frac{4p^2 - 1}{6p^2}$. Then
\[
-2p^2q(u) = -2p^2 \cdot \left(-\frac{4p^2 - 1}{6p^2}\right) \equiv -\frac{1}{3} \bmod p,
\]
so $-2p^2q(u)$ is a quadratic residue modulo $p$ if $p \equiv 1 \bmod 3$ and otherwise it is a quadratic nonresidue. Let $\Ga$ be an index $p$ sublattice of $T_S$ with cyclic discriminant group $d(\Ga)$ generated by $v$, and such that the quadratic characters of $-2p^2q(v)$ and  $-2p^2q(u)$ coincide (such a lattice exists, and is unique up to isomorphism, by Theorem~\ref{thm: lattices clean}). We claim that $\Ga \cong K_{2p^2}^\perp(-1)$. Since $d = 1$, using Theorem~\ref{thm: lattices}, either $i_\alpha = 0$ and $p \nmid \ca$, or $i_\alpha = 1$ and $p \nmid 1 + 4c_\alpha$.  On the other hand, Proposition~\ref{prp: cyclic case} shows that the isomorphism class of $\Ga$ depends only on whether $-2p^2q(v)$ is a square modulo $p$ or not.  Thus, each lattice $\Ga$ with $i_\alpha = 1$ and $p \nmid 1 + 4c_\alpha$ is isomorphic to a lattice $\Gamma_{\langle\alpha'\rangle}$ with $i_{\alpha'} = 0$ and $p \nmid c_{\alpha'}$. Consequently, we may assume that $i_\alpha = 0$ and $p \nmid \ca$. As in the proof of Proposition~\ref{prp: cyclic case}, there is a generator whose value for the discriminant form is $-\frac{1}{2p^2}(p^2 + 4\ca)$. Therefore, there is an isomorphism $\Ga \cong K_{2p^2}^\perp(-1)$if and only if there exists an $x \in (\Z/2p^2\Z)^\times$ such that
\[
-\frac{4p^2 - 1}{6p^2} \equiv -\frac{p^2 + 4\ca}{2p^2}x^2 \bmod 2\Z.
\]
Multiplying by $2p^2$ this becomes
\[
\frac{4p^2 - 1}{3} \equiv (p^2 + 4\ca)x^2 \bmod 4p^2\Z.
\]
Modulo $4$ there is always such an $x$.  Modulo $p$, we need $1 \equiv -3\ca x^2 \bmod p$.
If $p \equiv 1 \bmod 3$, then $-3$ and $\ca$ are squares modulo $p$.
If $p \equiv 2 \bmod 3$, then both are not squares modulo $p$.
Thus $-3\ca$ is always a square modulo $p$.

\noindent (2.) We know from Proposition~3.2.5 in~\cite{h00} that $d(K_{18d}^\perp(-1)) \cong \Z/6d\Z \oplus \Z/3\Z$. If $3$ divides $d$, then no discriminant group in Theorem~\ref{thm: lattices clean} is isomorphic to $d(K_{18d}^\perp(-1))$. Assume that $3 \nmid d$; then $d(K_{18d}^\perp(-1)) \cong \Z/2d\Z \oplus \Z/3\Z \oplus \Z/3\Z$. Let $u = (3,0) \in \Z/6d\Z \oplus \Z/3\Z$ be a generator for the subgroup $\Z/2d\Z$ of $d(K_{18d}^\perp(-1))$. Using Proposition 3.2.5 of~\cite{h00}, we have $q(u)= \frac{3}{2d}$. By Theorem~\ref{thm: lattices clean}, there is unique isomorphism class $\Ga$ with $d(\Ga) = \Z/2d\Z \oplus \Z/3\Z \oplus \Z/3\Z$; without loss of generality, we may assume that $i_\alpha = 0$ and $\ca = 0$. In this case, the vector $v_1$ in the proof of Theorem~\ref{thm: lattices} is a generator for the subgroup $\Z/2d\Z$ of $d(\Ga)$, and its value for the quadratic form is $-\frac{1}{2d}$. Thus, to have an isomorphism $d(\Ga) \cong d(K_{18d}^\perp(-1))$, we need $x \in (\Z/2d\Z)^\times$ such that
\[
\frac{3}{2d} \equiv -\frac{1}{2d}x^2 \bmod 2\Z.
\]
Multiplying by $2d$ we have
\[
3 \equiv -x^2 \bmod 4d.
\]
Such an $x$ exists if and only if $2 \nmid d$ and if for any prime $q \mid d$, we have $q \equiv 1 \bmod 3$. This shows that the conditions on $d$ in the statement of the proposition are necessary.  To see they are sufficient, we need only show that the $3$-Sylow part of $d(K_{18d}^\perp(-1))$ is isometric to the $3$-Sylow part of $d(\Ga)$. The intersection forms of $d(K_{18d}^\perp(-1))_3$ and $d(\Ga)_3$ are given by
\[
\begin{pmatrix}
\frac{2}{3}d & 0 \\ 0 & -\frac{2}{3}
\end{pmatrix}
\quad\textup{and}\quad
\begin{pmatrix}
0 & \frac{2}{3} \\ \frac{2}{3} & 0
\end{pmatrix},
\]
respectively. Note that under the necessary conditions, we have $d \equiv 1 \bmod 6$. This implies that $\frac{2}{3}d \equiv \frac{2}{3} \bmod 2 \Z$. It follows from this that the two discriminant forms are isometric.
\end{proof}

\begin{rmk}
The proof of Proposition~\ref{prop:cubicfourfolds} shows that, when $d=1$ and $p \equiv 1 \bmod 3$, the twisted nonspecial cohomology of a special cubic fourfold of discriminant $2p^2$ and the transcendental lattice of a general K3 surface of degree $2p^2$ are \emph{both} isomorphic to the same sublattice of index $p$ in $T_S$.  While seemingly surprising, this phenomenon reflects the existence of associated K3 surfaces, in the sense of Hassett, for cubic fourfolds in $\calC_{2p^2}$; see Theorem~5.1.3 of~\cite{h00}.
\end{rmk}

Next, we elaborate on the geometric connection between special cubic fourfolds and K3 surfaces when $d = 1$ and $p > 3$. Let $\mathcal D'$ be the local period domain of marked special cubic fourfolds $(Y, K_{2p^2})$ of discriminant $2p^2$. The domain $\mathcal D'$ is an open subset of a quadratic hypersurface in $\mathbb P(K_{2p^2}^{\perp}\otimes \mathbb C)$ and is a connected component of 
\[
\{ [\omega] \in \mathbb P(K_{2p^2}^{\perp}\otimes \mathbb C) \mid (\omega, \omega) = 0, \quad (\omega, \bar{\omega}) < 0 \}.
\]
Let $\Gamma_{2p^2}$ be the arithmetic group such that $\mathcal D := \Gamma_{2p^2} \backslash \mathcal D'$ is the global period domain of special cubic fourfolds of discriminant $2p^2$ (see Section~2.2 of~\cite{h00} for the definition of this group). The Torelli theorem for cubic fourfolds implies that the period map $\mathcal C_{2p^2} \rightarrow \mathcal D$ is an open immersion~\cite{voisin}. Let $\Lambda_2$ be the primitive cohomology lattice of a degree two polarized K3 surface. Write $\mathcal N'$ for the local period domain of K3 surfaces of degree two, which is an open domain of
\[
\{ [\omega] \in \mathbb P(\Lambda_2\otimes \mathbb C) \mid (\omega, \omega) = 0, \quad (\omega, \bar{\omega}) > 0 \}.
\]
Let $\Gamma_2$ be the arithmetic group such that $\mathcal N := \Gamma_2\backslash \mathcal N'$ is the global period domain for K3 surfaces of degree two. Surjectivity of the period map for K3 surfaces identifies the global period domain $\mathcal N$ with the coarse moduli space $\mathcal K_2$ of degree two polarized K3 surfaces.
\begin{prp}
\label{prop:mapofmoduli}
An embedding $j\colon K^\perp_{2p^2} \hookrightarrow {-\Lambda}_2$ induces a dominant morphism $\mathcal C_{2p^2} \to \mathcal K_2$ of quasi-projective varieties. 
\end{prp}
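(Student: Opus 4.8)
The plan is to build the morphism by descending the linear-algebra isomorphism of local period domains induced by $j$ to the arithmetic quotients, and then to read off dominance from a dimension count. First I would complexify. For a degree two K3 surface $\Lambda_2 = h^\perp$ has signature $(2,19)$, so $-\Lambda_2$ has signature $(19,2)$; on the other hand $K^\perp_{2p^2}$ has signature $(19,2)$ and rank $21$ as well. Hence $j$ is an embedding of finite index and induces a $\mathbb{C}$-linear isometry $j_{\mathbb C}\colon K^\perp_{2p^2}\otimes\mathbb C \xrightarrow{\sim} -\Lambda_2\otimes\mathbb C = \Lambda_2\otimes\mathbb C$. The associated isomorphism $\mathbb P(j_{\mathbb C})$ carries the isotropic quadric to the isotropic quadric, and because $j$ lands in $-\Lambda_2$ the sign condition $(\omega,\bar\omega)<0$ defining $\mathcal D'$ is converted into the condition $(\omega,\bar\omega)>0$ defining $\mathcal N'$. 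As $\mathcal D'$ is connected, its image lies in a single connected component, so $\mathbb P(j_{\mathbb C})$ identifies $\mathcal D'$ with $\mathcal N'$.

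The core of the argument is to check that this identification is equivariant, i.e. that conjugation by $j_{\mathbb C}$ sends $\Gamma_{2p^2}$ into $\Gamma_2$. Here I would invoke Nikulin's theory of extending isometries across a finite-index embedding~\cite{nikulin}: an isometry $\gamma$ of $L' := K^\perp_{2p^2}$ extends to the overlattice $L := -\Lambda_2$ precisely when the automorphism it induces on $d(L')$ fixes the isotropic subgroup $H = L/L' \subseteq d(L')$, which is cyclic of order $p$ via the identification $j(K^\perp_{2p^2}) = \Ga(-1)$ coming from Proposition~\ref{prop:cubicfourfolds}(1). Since both $\Gamma_{2p^2}$ and $\Gamma_2$ are (finite-index subgroups of) the stable orthogonal groups of their lattices, acting trivially on the respective discriminant groups and preserving the positive cone, every $\gamma\in\Gamma_{2p^2}$ acts trivially on $d(L')$, hence fixes $H$ and extends; and because $L\otimes\mathbb Q = L'\otimes\mathbb Q$, the extension is the unique rational isometry $\tilde\gamma = j_{\mathbb Q}\,\gamma\, j_{\mathbb Q}^{-1}$. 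One then checks that $\tilde\gamma$ acts trivially on $d(L) \cong H^\perp/H$ and preserves the positive cone, so $\tilde\gamma\in\Gamma_2$. Matching the precise definition of Hassett's group $\Gamma_{2p^2}$ (Section~2.2 of~\cite{h00}) with that of $\Gamma_2$ through this extension mechanism is the step I expect to be the main obstacle.

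Granting the equivariance, $\gamma\mapsto\tilde\gamma$ is an injective homomorphism $\Gamma_{2p^2}\hookrightarrow\Gamma_2$ whose image has finite index, both being arithmetic subgroups of the common $\mathbb Q$-group $O(\Lambda_2\otimes\mathbb Q)$. Thus $\mathbb P(j_{\mathbb C})$ descends to a finite surjective map $\mathcal D = \Gamma_{2p^2}\backslash\mathcal D' \to \Gamma_2\backslash\mathcal N' = \mathcal N$, which is a morphism of quasi-projective varieties by the Baily--Borel algebraicity underlying these period domains. Composing with the open immersion $\mathcal C_{2p^2}\hookrightarrow\mathcal D$ furnished by the Torelli theorem for cubic fourfolds~\cite{voisin} and with the identification $\mathcal N\cong\mathcal K_2$ coming from surjectivity of the K3 period map yields the desired morphism $\mathcal C_{2p^2}\to\mathcal K_2$.

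Finally, dominance is a dimension count. Both $\mathcal C_{2p^2}$ and $\mathcal K_2$ are irreducible of dimension $19$: the cubic-fourfold divisor $\mathcal C_{2p^2}$ sits inside the $20$-dimensional moduli space of cubic fourfolds, while $\mathcal K_2$ has dimension $19$. Since $\mathcal D\to\mathcal N$ is finite surjective and $\mathcal C_{2p^2}$ is open and dense in the irreducible variety $\mathcal D$, the image of $\mathcal C_{2p^2}$ is dense in $\mathcal N\cong\mathcal K_2$, and the morphism is therefore dominant.
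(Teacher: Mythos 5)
Your overall route is the same as the paper's: identify $\mathcal D'$ with $\mathcal N'$ via $j$, descend through the arithmetic quotients by establishing a containment of groups, invoke algebraicity, and deduce dominance from the open immersion $\calC_{2p^2}\hookrightarrow\calD$. The difference, and the problem, is at the step you yourself flag as ``the main obstacle'': the equivariance. You justify the preservation of the isotropic subgroup $H=(-\Lambda_2)/K^\perp_{2p^2}$ by asserting that $\Gamma_{2p^2}$ acts trivially on $d(K^\perp_{2p^2})$ because it is (a finite-index subgroup of) the stable orthogonal group. That is not how Hassett's group is defined: $\Gamma_{2p^2}$ is the image in $O(K^\perp_{2p^2})$ of the stabilizer of $K_{2p^2}$ inside the automorphisms of the full middle cohomology lattice of the cubic fourfold, and this image acts nontrivially on $d(K^\perp_{2p^2})$ in general. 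So as written the key containment is not established, and the argument has a gap exactly where you predicted it would.

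The paper closes this gap with a cleaner observation that makes the precise description of $\Gamma_{2p^2}$ irrelevant: since $d(K^\perp_{2p^2})$ is \emph{cyclic} (Proposition~\ref{prop:cubicfourfolds}), every subgroup of it is characteristic, so \emph{every} $\varphi\in O(K^\perp_{2p^2})$ preserves the isotropic subgroup $M=(-\Lambda_2)/K^\perp_{2p^2}\subseteq d(K^\perp_{2p^2})$. By Proposition~1.4.1 of~\cite{nikulin}, $\Lambda_2=\{x\in (K^\perp_{2p^2})^*\mid x\bmod K^\perp_{2p^2}\in M\}$, so $\varphi$ extends to $\Lambda_2$, i.e.\ $O(K^\perp_{2p^2})\subset O(\Lambda_2)$. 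This gives the needed compatibility for any choice of $\Gamma_{2p^2}\subseteq O(K^\perp_{2p^2})$, without any claim about its action on the discriminant group. If you replace your ``stable orthogonal group'' assertion with this cyclicity argument, the rest of your proof (identification of local domains, algebraicity via the Baily--Borel/Hassett argument, and dominance from the open immersion and the dimension count) matches the paper's.
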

\begin{proof}
The divisor $\calC_{2p^2}$ is algebraic by Theorem 3.1.2 of~\cite{h00}.  Next, we describe a holomorphic map $\mathcal C_{2p^2} \to \mathcal K_2$, by mirroring the argument of Lemma~3.2 of \cite{kondo}.  Proposition~\ref{prop:cubicfourfolds} allows us to identify the local period domain $\mathcal D'$ with $\mathcal N'$. We show that $O(K_{2p^2}^\perp) \subset O(\Lambda_2)$. Identifying $K_{2p^2}^\perp$ with a index $p$ sublattice of ${-\Lambda}_2$, we may consider the subgroup $M = ({-\Lambda}_2)/K_{2p^2}^\perp$ of $d(K_{2p^2}^\perp)$, which is isotropic. By Proposition~1.4.1 of \cite{nikulin}, we have
\[
\Lambda_2 = \{ x \in (K_{2p^2}^\perp)^* \mid x \bmod K_{2p^2}^\perp \in M \}.
\]
Any map $\varphi \in O(K_{2p^2}^\perp)$ naturally extends to $(K_{2p^2}^\perp)^*$. Hence $\varphi$ induces an isomorphism on $d(K_{2p^2}^\perp)$. The group $d(K_{2p^2}^\perp)$ being cyclic, $M$ is preserved by $\varphi$. This shows that $\varphi$ induces an isomorphism on $\Lambda_2$, i.e., $\varphi \in O(\Lambda_2)$.  To see that the holomorphic map $\mathcal C_{2p^2} \to \mathcal K_2$ obtained thus far is algebraic, one argues as in the proof of Proposition~2.2.2 of~\cite{h00}.  The morphism is dominant because the map $\calC_{2p^2} \to \calD$ is an open immersion.
\end{proof}

\begin{rmk}
There is an analogous morphism of coarse moduli spaces $\calK_{2p^2} \to \calK_2$ encoding the Mukai duality explained in Section~\ref{ss: mukai duals}. Kond\={o} has studied this morphism in detail~\cite{kondo}; it has degree $p^{10}(p^{10} + 1)$. 
\end{rmk}

\section{Mukai dual K3 surfaces}
\label{s: Mukai}

Moduli spaces of sheaves on K3 surfaces were first studied by Mukai~\cite{mukai84,mukai88i}. The theory was further developed by G{\"o}ttsche and Huybrechts~\cite{gh96}, O'Grady~\cite{ogrady97}, Yoshioka~\cite{yoshioka01}, and others. We will mainly be interested in two-dimensional moduli spaces; see Mukai~\cite{mukai87}. A general reference for these moduli spaces is the book of Huybrechts and Lehn~\cite{hl97}. The modern approach also relies heavily on Fourier-Mukai transforms~\cite{mukai81}, and their twisted version due to C{\u a}ld{\u a}raru~\cite{caldararu00,caldararu02}. A general reference for these derived equivalences is the book of Huybrechts~\cite{huybrechts06}.

Our goal in this section is to elaborate on Section~\ref{ss: mukai duals} and describe certain K3 surfaces and their Mukai dual surfaces, which are (twisted) derived equivalent. The Mukai dual surfaces can be described in several ways: they are moduli spaces of stable sheaves on the original K3 surface, and they also arise via projective duality. The former approach leads us to natural elements of the Brauer group, whereas the latter gives explicit equations for the Mukai dual surface, and is therefore indispensible for arithmetic applications.

\subsection{Set-up}
\label{ss: mukai set-up}

Let $X$ be a generic K3 surface of degree $2k$. We use generic to mean that $X$ belongs to a Zariski open subset of the moduli space of all K3 surfaces with primitive ample divisors $h$ with $h^2=2k$. The condition $\mathrm{NS}(X)\cong\mathbb{Z}h$ is sufficient to ensure genericity of $X$. Fix a \defi{Mukai vector}
$$v=(a,bh,c)\in\H^0(X,\Z)\oplus\H^2(X,\Z)\oplus\H^4(X,\Z),$$
and define $M_X(v)$ to be the moduli space of stable sheaves $\mathcal{E}$ on $X$ with Mukai vector
$$v(\mathcal{E}):=ch(\mathcal{E})\mathrm{Td}^{1/2}=\left(r,c_1(\mathcal{E}),r+\frac{1}{2}c_1(\mathcal{E})^2-c_2(\mathcal{E})\right)=v.$$
Here \defi{stable} means $\mu$-stable with respect to the polarization $h$ of $X$, i.e., any proper subsheaf $\mathcal{F}$ of $\mathcal{E}$ must have slope
$$\mu(\mathcal{F}):=\frac{c_1(\mathcal{F})\cdot h}{r(\mathcal{F})}$$
strictly less than the slope
$$\mu(\mathcal{E}):=\frac{c_1(\mathcal{E})\cdot h}{r(\mathcal{E})}$$
of $\mathcal{E}$. In the examples that interest us, $v$ will be primitive (i.e., $\mathrm{gcd}(a,b,c)=1$), in which case $\mu$-stability coincides with the related notion of Gieseker stability, and it also coincides with $\mu$-semistability and Gieseker semistability. 

Mukai~\cite{mukai84} proved that $M_X(v)$ is smooth of dimension
$$v^2+2:=b^2h^2-2ac+2,$$
and it admits a holomorphic symplectic structure. In particular, $v^2$ must be at least $-2$ if there exists a stable sheaf $\mathcal{E}$ with $v(\mathcal{E})=v$. When $v$ is primitive, $M_X(v)$ is compact; it is an irreducible symplectic variety. When $v$ is also isotropic (i.e., $v^2:=b^2h^2-2ac=0$), $S:=M_X(v)$ is a K3 surface~\cite{mukai87}. The degree of $S$ will be $2ac/\mathrm{gcd}(a,c)^2$. If $n:=\mathrm{gcd}(a,bh^2,c)=1$, then $S$ is a fine moduli space, the universal sheaf on $X\times S$ induces an equivalence between the derived categories of coherent sheaves on $X$ and $S$, and we say they are Mukai dual~\cite{mukai81}. If $n\neq 1$, then there is an $n$-torsion Brauer element $\alpha$ on $S$ obstructing the existence of a universal sheaf. Instead, there is a twisted universal sheaf and the derived category of $X$ is equivalent to the derived category of $\alpha$-twisted sheaves on $S$ (see C{\u a}ld{\u a}raru~\cite{caldararu00,caldararu02}).

Some particular cases are when $h^2=2k=2dn^2$ and $v=(n,h,nd)$. These cases were studied by Hassett and Tschinkel~\cite{ht00} ($d=1$), Iliev and Ranestad~\cite{ir07} ($d=2$ and $n=2$), the author~\cite{sawon07}, and Markushevich~\cite{markushevich06}. By the general principles outlined above, $S$ is a K3 surface of degree $2d$ that comes equipped with an $n$-torsion Brauer element $\alpha$.

In some low degree cases the Mukai duality can be realized by projective duality. The starting point is to describe $X$ as a linear section of a homogeneous variety. In the next three sections we will consider the cases
\begin{itemize}
\item $d=1$, $n=2$, producing a degree two K3 surface $S$ with a $2$-torsion Brauer element,
\item $d=1$, $n=3$, producing a degree two K3 surface $S$ with a $3$-torsion Brauer element,
\item $d=2$, $n=2$, producing a degree four K3 surface $S$ with a $2$-torsion Brauer element.
\end{itemize}
The first of these is precisely the remaining example of van Geemen.  In the first two cases, we represent the Brauer elements by Brauer-Severi varieties with fibres isomorphic to $\P^3$ and $\P^2$, respectively. 

For general K3 surfaces over $\C$, Huybrechts and Schr{\"o}er~\cite{hs03} proved that the Brauer group equals the cohomological Brauer group, i.e., the group of sheaves of Azumaya algebras up to equivalence is isomorphic to the torsion part of the analytic cohomology group $\H^2(S,\O^*)$. Their proof involves showing that any $n$-torsion element in $\H^2(S,\O^*)$ can be represented by a Brauer-Severi variety with fibres $\P^{n-1}$, which we call a {\em minimal\/} Brauer-Severi variety. For a K3 surface arising as a non-fine moduli space of sheaves, with an associated $n$-torsion Brauer element obstructing the existence of a universal sheaf, there are natural ways to represent the Brauer element as a Brauer-Severi variety, but in general they do not produce a minimal Brauer-Severi variety. In terms of sheaves of Azumaya algebras, the representatives are all Morita equivalent (naturally) but we do not necessarily obtain a sheaf of Azumaya algebras of minimal rank in this way. For example, in cases one and two above, the moduli space approach produces non-minimal Brauer-Severi varieties with fibres isomorphic to $\P^3$ and $\P^5$, respectively. However, there is also a geometric approach that produces a minimal Brauer-Severi variety in case two. We leave as an open question the existence of Brauer-Severi varieties in the third case.

\subsection{The degree eight/degree two duality}
\label{ss: 8to2}

A general degree eight K3 surface $X$ is a complete intersection of three quadrics in $\P^5$. To describe it as a linear section, we embed $Y:=\P^5=\P(V)$ in $\P^{20}=\P(\mathrm{Sym}^2V)$ using the Veronese embedding. The K3 surface $X$ will be the intersection of $Y\subset\P^{20}$ with a codimension three linear subspace $\P(U)=\P^{17}$.

Now we projectively dualize. The dual variety $\check{Y}$ is a sextic hypersurface in $\check{\P}^{20}=\P(\mathrm{Sym}^2V^*)$, the determinantal variety, and $\P(U^{\perp})=\P^2$ intersects this hypersurface in a plane sextic curve $C$. Here $U^{\perp}\subset\mathrm{Sym}^2V^*$ denotes the annihilator of $U$. We therefore obtain a degree two K3 surface $S$ as the double cover of $\P^2$ branched over the sextic $C$. We say that $X$ and $S$ are projectively dual varieties.

\begin{lem}
\label{lem: moduli}
The K3 surface $S$ can be naturally identified with the moduli space $M_X(2,h,2)$.
\end{lem}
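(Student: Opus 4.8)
\emph{The plan is to} realize the double cover $S\to\P^2$ as a moduli of spinor bundles restricted from the quadrics of the net, to compute that these restrictions have Mukai vector $(2,h,2)$, and then to invert the construction using the tautological map to $\Gr(2,4)$. Recall that a point of $S$ lying over $[q]\in\P^2\setminus C$ is exactly a choice of one of the two rulings of the smooth four--dimensional quadric $q\subset\P^5$ by planes, equivalently a choice of one of the two rank--two spinor bundles $\mathcal S^{+},\mathcal S^{-}$ on $q$; over the sextic $C$ the quadric degenerates and the two rulings merge, which is precisely the ramification of $S\to\P^2$.

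First I would construct the forward map. For a point $s\in S$ over $[q]\notin C$, set $\mathcal E_s:=\big(\mathcal S^{\pm}_q\big)\big|_X$, suitably twisted so that $\det\mathcal E_s=\mathcal O_X(h)$. Since $X\subset q$ is cut out on $q$ by the two remaining quadrics, a short restriction/Koszul computation gives $c_1(\mathcal E_s)=h$ and $c_2(\mathcal E_s)=4$, so that $v(\mathcal E_s)=(2,h,2)$ with $v^2=h^2-8=0$ (as $h^2=8$), matching the isotropic Mukai vector attached to a degree--two K3. Because $\NS(X)\cong\Z h$ and this Mukai vector is primitive, any such $\mathcal E_s$ is automatically $\mu$--stable: a destabilizing subsheaf would have first Chern class a multiple of $h$ of slope at least $h^2/2$, incompatible with $c_1=h$ being primitive. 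These bundles form an algebraic family over $S\setminus E$, where $E$ is the preimage of $C$, hence define a morphism $f\colon S\setminus E\to M_X(2,h,2)$.

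To prove $f$ is an isomorphism I would build its inverse. Let $\mathcal E$ be stable with $v(\mathcal E)=(2,h,2)$. Riemann--Roch gives $\chi(\mathcal E)=4$, and stability forces $\H^2(X,\mathcal E)=\H^0(X,\mathcal E^{\vee})=0$, so generically $\dim\H^0(X,\mathcal E)=4$ and $\mathcal E$ is globally generated. The evaluation map then yields $\phi\colon X\to\Gr(2,\H^0(X,\mathcal E))=\Gr(2,4)$ with $\phi^{*}\mathcal O_{\Gr}(1)=\det\mathcal E=\mathcal O_X(h)$; composing with the Pl\"ucker embedding $\Gr(2,4)\hookrightarrow\P\big(\wedge^2\H^0(X,\mathcal E)^{\vee}\big)=\P^5$ recovers the original embedding $X\hookrightarrow\P^5$ by $|h|$. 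Hence $X$ lies on the smooth quadric fourfold $\Gr(2,4)$, which is one of the quadrics $q$ of the net, and the choice of $\Gr(2,4)$ versus its transpose Grassmannian records one of the two rulings of $q$. Sending $\mathcal E\mapsto(q,\text{ruling})$ defines a morphism $g\colon M_X(2,h,2)\to S$ with $g\circ f=\mathrm{id}$ on $S\setminus E$; in particular $g$ is dominant and generically injective, so, being a morphism between K3 surfaces (minimal, of Kodaira dimension zero), it is an isomorphism, and $f$ extends across $E$ to its inverse.

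The main obstacle is the bijectivity, and more precisely the injectivity over the discriminant. Away from $C$ the reconstruction shows $\mathcal E_s$ determines $q$ together with its ruling, so generic injectivity holds; the delicate points are (i) verifying that the two rulings over a fixed $[q]$ really give non--isomorphic restricted spinor bundles, which share the same Chern data, so that one must distinguish $\mathcal S^{+}$ from $\mathcal S^{-}$ through the $\Gr(2,4)$--reconstruction rather than by numerical invariants, and (ii) controlling stability and the value $\dim\H^0(X,\mathcal E)=4$ along the ramification curve $E$, where $q$ acquires a node and the spinor bundle degenerates. Once generic injectivity and dominance are established, however, the conclusion is forced by the fact that a birational morphism between K3 surfaces is an isomorphism.
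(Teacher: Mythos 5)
Your overall architecture matches the paper's: restrict the two tautological (spinor) rank-two bundles of the quadrics in the net to $X$, verify the Mukai vector $(2,h,2)$, and reconstruct the quadric from a stable sheaf via the classifying map $X\to\mathrm{Gr}(2,4)$ determined by global sections, finishing with a completeness/birationality argument. But three of the technical pillars are asserted or argued incorrectly, and these are exactly where the paper does its work.

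First, your stability argument fails as stated. With $\NS(X)=\Z h$, a rank-one destabilizing subsheaf of $\mathcal{E}_s$ has $c_1=dh$ with $dh^2\geq h^2/2$, i.e.\ $d\geq 1$; nothing about primitivity of $c_1(\mathcal{E}_s)=h$ excludes this (the bundle $\mathcal{O}(h)\oplus\mathcal{O}$ has primitive $c_1$ and is unstable). Primitivity only rules out \emph{strict} semistability, since $d=1/2$ is not an integer; actual stability of the restricted bundles needs a real argument, which the paper outsources to Ingalls--Khalid. Second, to define the inverse you need $h^0(X,\mathcal{E})=4$ \emph{exactly} for every stable $\mathcal{E}$ with $v(\mathcal{E})=(2,h,2)$, not just generically: otherwise the classifying map does not land in $\mathrm{Gr}(2,4)$ and you cannot recover a quadric of the net containing $X$. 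Riemann--Roch together with $h^2=0$ only gives $h^0=4+h^1$, and the vanishing of $h^1$ is a genuinely nontrivial step in the paper (a nonzero class in $\H^1$ yields an extension with Mukai square $-10$, whose destabilizing subsheaf is forced to split the extension, a contradiction). Third, global generation of $\mathcal{E}$ is asserted without proof, whereas the paper again needs a stability/Mukai-square argument to exclude a proper globally generated image. Your endgame (a birational morphism between K3 surfaces is an isomorphism, or equivalently completeness of the family plus two-dimensionality of $M_X(v)$) is fine once these are in place, but as written the proposal skips the substance of the lemma.
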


\begin{prf}
This is Example~0.9 of Mukai~\cite{mukai84}, and Example~2.2 in~\cite{mukai88i}; a very detailed discussion is given by Ingalls and Khalid~\cite{ik13}. The basic idea is as follows. A point $p$ in $\P^2\subset\check{\P}^{20}$ corresponds to a hyperplane $H_p$ in $\P^{20}$, which intersects $Y=\P^5\subset\P^{20}$ in a quadric four-fold $Z_p$. The complete picture is:
$$\begin{array}{ccccccc}
 & & & \mbox{dually} & & & \\
Y={\P}^5={\P}(V) & \hookrightarrow & {\P}(\mathrm{Sym}^2V)={\P}^{20} & & \check{Y} & \hookrightarrow & {\P}(\mathrm{Sym}^2V^*)=\check{\P}^{20} \\
\cup & & \cup & & \cup & & \cup \\
Z_p & \hookrightarrow & H_p={\P}^{19} & & C & \hookrightarrow & {\P}(U^{\perp})={\P}^2 \\
\cup & & \cup & & & & \cup \\
X & \hookrightarrow & {\P}(U)={\P}^{17} & & & & p \\
\end{array}$$
If $p\in\P^2\backslash C$, then $Z_p$ is a smooth quadric. Now every smooth quadric in $\P^5$ can be identified with the Grassmannian $\mathrm{Gr}(2,4)$. The Grassmannian comes with two natural vector bundles of rank two: the universal bundle $E$ and the universal quotient bundle $F$, which fit in the exact sequence
$$0\rightarrow E\rightarrow \C^4\otimes\O\rightarrow F\rightarrow 0.$$
Dualizing gives
$$0\rightarrow F^*\rightarrow (\C^4)^*\otimes\O\rightarrow E^*\rightarrow 0,$$
so $E^*$ can also be regarded as a quotient bundle on $\mathrm{Gr}(2,4)$. Restricting $E^*$ and $F$ to $X$ via the embedding $X\subset Z_p\cong\mathrm{Gr}(2,4)$ yields two stable vector bundles on $X$ with Mukai vectors $v=(2,h,2)$ (see~\cite{ik13} for details, particularly page 450 and Corollary~3.5). Note that we had to choose an identification $Z_p\cong\mathrm{Gr}(2,4)$, but a different identification yields the same pair of bundles $E^*|_X$ and $F|_X$, up to interchanging them (the automorphism group of $\mathrm{Gr}(2,4)$ has two connected components, and as homogeneous bundles, $E^*$ and $F$ are invariant under pullbacks by automorphisms in the connected component of the identity, and interchanged by pullbacks by automorphisms in the other component). Alternatively, the identification $Z_p\cong\mathrm{Gr}(2,4)$ could be made canonical in the following way: Recall that there are two $\P^3$-families of maximal isotropic planes contained in a smooth quadric. The subfamily of maximal isotropic planes from, say, the first family that pass through a fixed point of the quadric will be parametrized by a line $\P^1$ in $\P^3$. Equivalently, each point of the quadric gives a plane $\C^2$ in $\C^4$, and this leads to an isomorphism of the quadric with $\mathrm{Gr}(2,4)$. The second family will yield a second isomorphism of the quadric with $\mathrm{Gr}(2,4)$, and of course the automorphism of $\mathrm{Gr}(2,4)$ given by composing these isomorphisms will interchange $E^*$ and $F$.

If $p\in C$, then $Z_p$ is a singular quadric. Assuming the K3 surface $X$ is generic, this singular quadric will always be of rank five, so $Z_p$ will be a cone over a smooth quadric threefold. This quadric in $\P^4$ can be identified with the lagrangian Grassmannian $\mathrm{LGr}(2,4)$. The blow-up $\tilde{Z}_p$ of $Z_p$ at the apex of the cone is therefore a $\P^1$-bundle over $\mathrm{LGr}(2,4)$. The embedding $X\subset Z_p$ lifts to an embedding $X\subset \tilde{Z}_p$ because $X$ does not contain the apex. Now the universal bundle $E$ and universal quotient bundle $F$ over $\mathrm{LGr}(2,4)$ are dual, yielding a self-dual sequence
$$0\rightarrow F^*\rightarrow \C^4\otimes\O\rightarrow F\rightarrow 0.$$
Thus the pair of bundles on $X$ degenerate to isomorphic bundles in this case, as pulling back $E^*\cong F$ to $\tilde{Z}_p$, and then restricting to $X\subset\tilde{Z}_p$, yields isomorphic bundles. We conclude that the double cover $S$ of $\P^2$ branched over $C$ naturally parametrizes a family of stable bundles on $X$ with Mukai vectors $v=(2,h,2)$. Now let us show that every bundle in $M_X(2,h,2)$ arises in this way.

\vspace*{2mm}
\noindent
{\bf Claim:} A stable bundle $\mathcal{E}$ with Mukai vector $v(\mathcal{E})=(2,h,2)$ satisfies
$$h^i(\mathcal{E})=\mathrm{dim}\H^i(X,\mathcal{E})=\left\{\begin{array}{lr} 4 & \mbox{if }i=0, \\ 0 & \mbox{otherwise.} \end{array}\right.$$

\begin{prf}
This follows from standard arguments involving stable sheaves. Firstly, $\H^2(X,\mathcal{E})\cong\H^0(X,\mathcal{E}^{\vee})^{\vee}$ vanishes because $\mathcal{E}^{\vee}$ is also a stable bundle, with slope $\mu(\mathcal{E}^{\vee})=-h^2/2=-4$. Next, suppose that $\H^1(X,\mathcal{E})$ is non-vanishing. Then $\mathrm{Ext}^1(\O,\mathcal{E}^{\vee})=\H^1(X,\mathcal{E}^{\vee})\cong\H^1(X,\mathcal{E})^{\vee}$ is also non-vanishing, so there is a non-trivial extension
$$0\rightarrow \mathcal{E}^{\vee}\rightarrow \mathcal{F}\rightarrow\O\rightarrow 0.$$
Now $\mathcal{F}$ has Mukai vector $(3,-h,3)$ and
$$(3,-h,3)^2=(-h)^2-2.3.3=-10<-2,$$
so $\mathcal{F}$ cannot be stable. Let $\mathcal{G}\subset\mathcal{F}$ be a destabilizing sheaf; then $\mathcal{G}$ has slope
$$\mu(\mathcal{G})=\frac{c_1(\mathcal{G})\cdot h}{r(\mathcal{G})}\geq \mu(\mathcal{F})=\frac{-h^2}{3}.$$
Moreover, $\mathcal{G}$ is necessarily of rank $1$ or $2$, so writing $c_1(\mathcal{G})=dh$ with $d\in\mathbb{Z}$, we find that $d\geq -r(\mathcal{G})/3>-1$. Therefore $d\geq 0$ and $\mu(\mathcal{G})\geq 0$. Let $g$ be the composition
$$\mathcal{G}\rightarrow\mathcal{F}\rightarrow\O.$$
The kernel of $g$ is then a subsheaf of $\mathcal{E}^{\vee}$ with slope $\mu(\mathrm{ker}g)\geq 0$; by the stability of $\mathcal{E}^{\vee}$, $\mathrm{ker}g$ must vanish. So $\mathcal{G}\cong\O$ and $\mathcal{G}\rightarrow\mathcal{F}$ gives a splitting of the exact sequence defining $\mathcal{F}$, contradicting the fact that the extension class is non-trivial. We conclude that $\H^1(X,\mathcal{E})$ must vanish. Finally, Riemann-Roch gives
$$\chi(\mathcal{E})=\int_X(2,h,2)(1,0,1)=4,$$
so $h^0(\mathcal{E})=4$. This completes the proof of the claim.
\end{prf}

It follows from the claim that $\mathcal{E}$ has precisely four independent sections. Moreover, we can show that $\mathcal{E}$ is generated by its sections, i.e., the evaluation map
$$\H^0(X,\mathcal{E})\otimes\O\longrightarrow\mathcal{E}$$
is surjective. Roughly, if the evaluation map were not surjective, it would factor through
$$\H^0(X,\mathcal{E})\otimes\O\longrightarrow\mathcal{E}^{\prime}\longrightarrow\mathcal{E}.$$
One can then argue that $\mathcal{E}^{\prime}$ has Mukai vector $(2,h,2-k)$ where $k\geq 1$, and hence the kernel $\mathcal{F}$ in
$$\mathcal{F}\longrightarrow\H^0(X,\mathcal{E})\otimes\O\longrightarrow\mathcal{E}^{\prime}$$
will have Mukai vector $v(\mathcal{F})=(2,-h,2+k)$. Since $v(\mathcal{F})^2=-4k<-2$, $\mathcal{F}$ will be unstable. Looking at the slope, we see that $\mathcal{F}$ must have a section. The composition
$$\O\longrightarrow\mathcal{F}\longrightarrow\H^0(X,\mathcal{E})\otimes\O$$
will identify $\O$ isomorphically with its image in $\H^0(X,\mathcal{E})\otimes\O$, which will look like $\langle s\rangle\otimes\O$ for some non-zero section $s\in\H^0(X,\mathcal{E})$. But then $\langle s\rangle\otimes\O$ will lie in the kernel of the evaluation map $\H^0(X,\mathcal{E})\otimes\O\rightarrow\mathcal{E}$. This is only possible if $s=0$, a contradiction.

Thus every stable sheaf $\mathcal{E}$ with Mukai vector $(2,h,2)$ is naturally a quotient of the trivial rank four bundle, implying that there is a classifying map $X\rightarrow\mathrm{Gr}(2,4)$ such that $\mathcal{E}$ is the pullback of the universal quotient bundle on $\mathrm{Gr}(2,4)$. Generically, the classifying map will be an embedding and compatible with the embeddings into $\P^5$, and thus $\mathrm{Gr}(2,4)$ can be identified with a smooth quadric four-fold containing $X$, i.e., we have
$$X\subset\mathrm{Gr}(2,4)\cong Z_p\subset\P^5$$
for some $p\in\P^2\backslash C$. But then $\mathcal{E}$ belongs to the family of bundles on $X$ parametrized by $S$, as described above. Note that the covering involution is given by mapping $\mathcal{E}$ to the cokernel of the adjoint map
$$\mathcal{E}^*\longrightarrow\H^0(X,\mathcal{E})^*\otimes\O.$$

In the non-generic case, the stable bundle $\mathcal{E}$ obtained in this way fits into a self-dual sequence
$$\mathcal{E}^*\longrightarrow\H^0(X,\mathcal{E})^*\otimes\O\cong\H^0(X,\mathcal{E})\otimes\O\longrightarrow\mathcal{E}.$$
In particular, there is a skew two-form on $\H^0(X,\mathcal{E})$, and the classifying map factors through the lagrangian Grassmannian
$$X\longrightarrow\mathrm{LGr}(2,4)\subset\mathrm{Gr}(2,4);$$
it is no longer an embedding. The lagrangian Grassmannian is a hyperplane section of the usual Grassmannian, $\mathrm{LGr}(2,4)\subset\P^4$, and the required singular quadric $Z_p$ containing $X$ is a cone over $\mathrm{LGr}(2,4)$.
\end{prf}

\begin{rmk}
We could instead observe that $S$ parametrizes a complete family of stable sheaves on $X$, which therefore must be all of $M_X(v)$, since the latter is two-dimensional.
\end{rmk}

\begin{lem}
\label{lem: BSvariety}
The K3 surface $S$ comes with a Brauer-Severi variety $W\rightarrow S$ whose fibres are isomorphic to $\P^3$.
\end{lem}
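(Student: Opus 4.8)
The plan is to realize $W$ as the projectivization of the pushforward of a twisted universal sheaf on $X \times S$. Recall from Section~\ref{ss: mukai set-up} that the Mukai vector $v = (2,h,2)$ has $n = \gcd(2, h^2, 2) = 2$ (here $h^2 = 8$), so $S = M_X(v)$ is \emph{not} a fine moduli space: there is a $2$-torsion class $\alpha \in \Br(S)$ obstructing the existence of a universal sheaf, but there is an $\alpha$-twisted universal sheaf $\calU$ on $X \times S$ by C{\u a}ld{\u a}raru~\cite{caldararu00,caldararu02}.

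First I would push $\calU$ forward along the projection $\pi_S \colon X \times S \to S$ and set $\calV := \pi_{S,*}\calU$. The Claim established inside the proof of Lemma~\ref{lem: moduli} shows that every stable sheaf $\calE$ with Mukai vector $(2,h,2)$ satisfies $h^0(X,\calE) = 4$ and $h^1(X,\calE) = h^2(X,\calE) = 0$. Applying cohomology and base change (Grauert's theorem) fibrewise—this is a local statement on $S$, so it is insensitive to the twisting—the higher direct images $R^i\pi_{S,*}\calU$ vanish for $i > 0$ and $\calV$ is locally free of rank $4$; because $\calU$ is $\alpha$-twisted, $\calV$ is an $\alpha$-twisted locally free sheaf of rank $4$, whose fibre over $[\calE] \in S$ is canonically $\H^0(X,\calE)$.

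Finally I would projectivize. The projectivization of a twisted locally free sheaf is an honest projective bundle—the scalar ambiguity in the transition functions dies in $\PGL_4$—so $W := \P(\calV) \to S$ is a Brauer-Severi variety with fibres $\P(\H^0(X,\calE)) \cong \P^3$. Tracing through the resulting $\PGL_4$-cocycle shows that the Brauer class of $W$ is exactly the twist $\alpha$, i.e.\ $W$ represents the obstruction to the fineness of $M_X(v)$; this realizes $\alpha$ by a (non-minimal) Brauer-Severi variety, as announced in Section~\ref{ss: mukai set-up}.

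The single technical point is the local freeness and the constancy of the rank of $\calV$, and this rests entirely on the \emph{uniform} cohomology vanishing of the Claim in Lemma~\ref{lem: moduli} combined with base change; once this is in hand, both the passage from a twisted bundle to its projectivization and the identification of the resulting Brauer class with $\alpha$ are formal. One should also note that the construction is independent of the choices involved: a different twisted universal sheaf differs from $\calU$ by the pullback of a line bundle on $S$, which does not alter $\P(\calV)$.
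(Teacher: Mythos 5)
Your proof is correct, but it takes a genuinely different route from the paper's. The paper constructs $W$ purely projective-geometrically: it takes the relative Fano variety of maximal isotropic planes in the fibres of the quadric bundle $Z\rightarrow\P^2$, observes that a smooth quadric fourfold carries two $\P^3$-families of such planes while a rank-five quadric carries a single $\P^3$-family, and identifies $W\rightarrow S$ as the Stein factorization of $W\rightarrow\P^2$ through the discriminant double cover (citing Proposition~3.3 of \cite{hvv11} for the Brauer--Severi property). You instead build $W$ moduli-theoretically as $\P(\pi_{S,*}\calU)$ for a twisted universal sheaf, resting on the uniform cohomology vanishing from the Claim in Lemma~\ref{lem: moduli} plus base change; this is essentially the content the paper defers to the Claim inside the proof of Lemma~\ref{obstruction}, where the local $\P^3$-bundles $\P(q_*\mathcal{U}_i)$ are patched and then identified with the geometric $W$. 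What your approach buys is that the identification of the Brauer class of $W$ with the obstruction class $\alpha$ is immediate by construction, so Lemma~\ref{obstruction} becomes nearly tautological. What it loses is the explicit geometric model: the paper's description of $W$ as a family of isotropic planes in an explicit quadric fibration is what makes the Clifford-algebra identification in Lemma~\ref{lem: BSorderTwo} (via Proposition~B.3 of \cite{abb11}) and the subsequent arithmetic computations possible, so if one adopts your construction one still owes the reader the comparison with the Fano variety of isotropic planes before proceeding to Section~\ref{S: Failure of WA}.
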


\begin{prf}
We have a family $Z\rightarrow\P^2$ of quadric fourfolds over $\P^2$. Let $W\rightarrow\P^2$ be the Fano variety of maximal isotropic planes ($\cong\P^2$) contained in the fibres of $Z\rightarrow\P^2$. If $p\in\P^2\backslash C$ then $Z_p$ is a smooth quadric fourfold, which therefore contains two $\P^3$-families of maximal isotropic planes. Therefore the fibre $W_p$ will consist of two copies of $\P^3$. If $p\in C$ then $Z_p$ is a singular quadric fourfold, and the above familes degenerate to a single $\P^3$-family of maximal isotropic planes. Therefore the fibre $W_p$ will be a single $\P^3$.

The morphism $W\rightarrow\P^2$ therefore factors through the double cover of $\P^2$ branched over $C$, i.e., it factors through $W\rightarrow S\rightarrow\P^2$ (this is the Stein factorization). Then $W\rightarrow S$ is the required Brauer-Severi variety, with fibres isomorphic to $\P^3$; see Proposition~3.3 of \cite{hvv11}.
\end{prf}

\begin{lem}
\label{lem: BSorderTwo}
The Brauer-Severi variety $W\rightarrow S$ gives a class $\alpha\in\Br(S)$ in the Brauer group of $S$ of order two.
\end{lem}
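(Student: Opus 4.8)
The plan is to sandwich the order of $\alpha$: an easy upper bound of $4$ coming from the relative dimension of $W\to S$, which I then sharpen to $2$ using the rank-two spinor geometry of the quadric fourfolds $Z_p$, and finally a nonvanishing statement showing the order is exactly $2$.

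First I would record the elementary upper bound. A Brauer--Severi variety with fibres $\P^{n-1}$ represents a class whose order divides $n$, via the connecting map $\H^1(S,\PGL_n)\to \H^2(S,\G_m)=\Br(S)$ attached to $1\to\G_m\to\GL_n\to\PGL_n\to1$. Since the fibres of $W\to S$ are isomorphic to $\P^3$ by Lemma~\ref{lem: BSvariety}, the class $\alpha$ has order dividing $4$.

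Next I would cut the order down to $2$ by producing a second Brauer--Severi variety, with $\P^1$ fibres, representing the same class over a dense open set. Let $S^\circ\subset S$ be the locus lying above $\P^2\setminus C$, so that each fibre $Z_p$ is a smooth quadric fourfold, isomorphic to $\Gr(2,4)$, carrying its two rank-two spinor bundles (the tautological sub- and quotient bundles $E^*,F$ of Lemma~\ref{lem: moduli}). A point of $S^\circ$ is exactly a choice of one of the two rulings, hence of one of these rank-two bundles, and these assemble into a rank-two $\alpha$-twisted sheaf on $S^\circ$; its projectivization is a $\P^1$-bundle $W'\to S^\circ$. One checks that $[W']=\alpha|_{S^\circ}$, since both $W$ and $W'$ are the Brauer--Severi varieties attached to the obstruction to a global (untwisted) spinor/orientation structure on the quadric bundle, equivalently to the even Clifford algebra of the net in the sense of~\cite{abb11}. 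Thus $\alpha|_{S^\circ}$ has order dividing $2$, and since $\Br(S)\to\Br(S^\circ)$ is injective (the Brauer group of a smooth surface injects into that of any dense open), we conclude $2\alpha=0$ in $\Br(S)$.

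Finally, and this is the crux, I must show $\alpha\neq0$. By Lemma~\ref{lem: moduli} we have $S\cong M_X(2,h,2)$, and the order of the Brauer class obstructing a universal sheaf equals the positive generator of the image of the Mukai pairing $\langle v,-\rangle$ restricted to the algebraic part of the Mukai lattice of $X$. Since $X$ is generic with $\NS(X)\cong\Z h$, this algebraic lattice is $\Z(1,0,0)\oplus\Z(0,h,0)\oplus\Z(0,0,1)$, and for $v=(2,h,2)$ with $h^2=8$ one computes $\langle v,(a',b'h,c')\rangle=8b'-2a'-2c'$, whose values have greatest common divisor exactly $2$. Hence the fineness obstruction has order exactly $2$; identifying it with $\alpha$ through the spinor description above gives $\alpha\neq0$. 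The main obstacle is precisely this last step: one must carefully justify that the class of the Fano variety $W\to S$ coincides with the fineness obstruction of the moduli interpretation (so that the Mukai-lattice computation applies to $\alpha$ itself), rather than merely bounding its order by geometric means; the discrepancy between the relative dimension $3$ of $W$ and the exact order $2$ is expected, reflecting that $W\to S$ is a non-minimal representative as already noted in Section~\ref{ss: mukai set-up}.
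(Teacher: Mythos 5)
Your proposal is correct in outline and reaches the right conclusion, but the key middle step is argued by a genuinely different route from the paper's. The paper also begins with the a priori bound $4\alpha=0$, but then kills $2\alpha$ purely algebraically: citing Proposition~B.3 of \cite{abb11} to identify $\alpha$ with the class of the even Clifford algebra $C_0(q)$ of the net of quadrics, using the canonical involution of the full Clifford algebra to produce an isomorphism $C(q)\otimes C(q)\cong\mathrm{End}(C(q))$ (so $[C(q)]$ has order dividing two over $\C(\P^2)$), and exhibiting an explicit Azumaya isomorphism showing $[C_0(q)]$ is the pullback of $[C(q)]$ to $\C(S)$. You instead exhibit a rank-two ($\P^1$-fibred) representative over the open locus via the spinor geometry of the smooth quadrics and invoke injectivity of $\Br(S)\to\Br(S^\circ)$; and your Mukai-pairing computation $\gcd_{(a',b',c')}(8b'-2a'-2c')=2$ pins down the exact order, whereas the paper's proof only establishes $2\alpha=0$ and defers non-triviality to the remark following the lemma together with Lemma~\ref{obstruction}. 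Two points to tighten. First, the rank-two bundles $E^*|_X$ and $F|_X$ (equivalently the spinor bundles) live on $X$, respectively on the quadrics $Z_p$, not on $S$; to actually get a rank-two $\alpha$-twisted sheaf on $S^\circ$ you should restrict the twisted universal sheaf on $X\times S^\circ$ to a slice $\{x\}\times S^\circ$ (using that all the parametrized sheaves are locally free), or otherwise supply the descent --- as written, ``these assemble into a rank-two $\alpha$-twisted sheaf on $S^\circ$'' skips the step that produces a sheaf on the base. Second, both your refinement to order dividing $2$ (via $[W']=\alpha|_{S^\circ}$) and your non-vanishing argument lean on identifying $[W]$ with the even Clifford class and with the fineness obstruction of $M_X(2,h,2)$; you rightly flag this as the crux, and it is precisely what the paper's citation of \cite{abb11} and its Lemma~\ref{obstruction} supply, so the argument is not circular but does front-load content the paper distributes over those two results. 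The paper's Clifford-theoretic route has the additional payoff of setting up the explicit biquaternion representation of $\alpha$ exploited in Section~\ref{S: Failure of WA}.
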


\begin{prf}
A priori, the order of the class $\alpha$ must divide four, as locally $W$ is the projectivization of a rank four bundle. By Proposition~B.3 of Auel, Bernardara, and Bolognesi~\cite{abb11} $\alpha$ is also the class arising from the even Clifford algebra on the discriminant cover $S\rightarrow\P^2$. Specifically, a $\P^2$-family of quadrics in $\P^5$ is equivalent to a quadratic form $q$ in six variables over the field $\C(\P^2)$. Because the rank of $q$ is even, the corresponding Clifford algebra $C(q)$ is a central simple algebra over $\C(\P^2)$, whereas the centre of the even Clifford algebra $C_0(q)$ is the quadratic extension $\C(S)$ of $\C(\P^2)$ given by adjoining the square root of the sextic discriminant; $C_0(q)$ is then a central simple algebra over $\C(S)$ (see Lam~\cite{lam05}). The result of Auel et al.\ identifies $\alpha\in\Br(S)\subset\Br(\C(S))$ with the Brauer class of $C_0(q)$.

Now the Clifford algebra $C(q)$ admits a canonical involution $\sigma$ sending $x_1\otimes\cdots\otimes x_k$ to $x_k\otimes\cdots\otimes x_1$. This anti-automorphism induces an automorphism of Azumaya algebras
\begin{eqnarray*}
C(q)\otimes C(q) & \longrightarrow & \mathrm{End}(C(q)) \\
x\otimes y & \longmapsto & (z\mapsto xz\sigma(y)),
\end{eqnarray*}
implying that the Brauer class of $C(q)$ in $\Br(\C(\P^2))$ has order two. Finally, the Brauer class of $C_0(q)$ is the pull-back of the Brauer class of $C(q)$ to $\C(S)$. To see this, let $V$ be the underlying six-dimensional vector space of the quadratic form $q$. Then the map
\begin{eqnarray*}
V\otimes_{\C(\P^2)}\C(S) & \longrightarrow & \mathrm{End}_{C_0(q)}(C_0(q)\oplus C_1(q)) \\
v & \longmapsto & \left(\begin{array}{cc} 0 & v \\ v & 0 \end{array}\right)
\end{eqnarray*}
induces the required isomorphism of Azumaya algebras
$$C(q)\otimes_{\C(\P^2)}\C(S)\stackrel{\cong}{\longrightarrow} \mathrm{End}_{C_0(q)}(C_0(q)\oplus C_1(q))\cong C_0(q)\otimes_{\C(\P^2)}\mathrm{M}_{2\times 2}(\C(\P^2))$$
by the universal property of the Clifford algebra. It follows that the Brauer class of $C_0(q)$, and hence $\alpha\in\Br(S)$, also has order two.
\end{prf}

\begin{rmk}
It is not obvious from the Clifford algebra description that $\alpha\in\Br(S)$ is non-trivial, but this follows from the following lemma, since we saw in Section~\ref{ss: mukai set-up} that $S$ is a non-fine moduli space when $n:=\mathrm{gcd}(2,h^2,2)=2$ is greater than $1$.
\end{rmk}

\begin{lem}
\label{obstruction}
The class $\alpha\in\Br(S)$ of the Brauer-Severi variety $W\rightarrow S$ is the obstruction to the existence of a universal sheaf for the moduli space $S=M_X(2,h,2)$.
\end{lem}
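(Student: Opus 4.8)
The plan is to identify the obstruction class $\beta \in \Br(S)$ to the existence of a universal sheaf on $X \times S$ with the class $\alpha$ of the Brauer-Severi variety $W \to S$ by realizing both as the Brauer class of the projectivization of a single (twisted) rank-four bundle on $S$. I would first recall the general mechanism: since $v = (2,h,2)$ is primitive, $S = M_X(2,h,2)$ carries an \'etale-local (equivalently, a twisted) universal sheaf $\mathcal{E}^{\mathrm{univ}}$ on $X \times S$, well defined up to tensoring by a line bundle pulled back from $S$, and the failure of these local choices to glue is exactly the obstruction class $\beta \in \Br(S)$. Because $n = \gcd(2, h^2, 2) = 2$, we know from Section~\ref{ss: mukai set-up} that $\beta \neq 0$ and $2\beta = 0$. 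The goal is then to prove $\beta = \alpha$.

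Next I would push forward along $p_S \colon X \times S \to S$. By the Claim proved inside Lemma~\ref{lem: moduli}, every stable sheaf with Mukai vector $(2,h,2)$ has $h^0 = 4$ and $h^1 = h^2 = 0$, so $\mathcal{G} := (p_S)_* \mathcal{E}^{\mathrm{univ}}$ is locally free of rank four and the higher direct images vanish. Since $\mathcal{E}^{\mathrm{univ}}$ is twisted by $\beta$, so is $\mathcal{G}$; nonetheless its projectivization $\P(\mathcal{G}) \to S$ is an honest Brauer-Severi variety with fibres $\P^3$, and its Brauer class is precisely the twisting class $\beta$. Thus it suffices to produce an isomorphism $W \cong \P(\mathcal{G})$ of Brauer-Severi varieties over $S$, up to replacing $\mathcal{G}$ by $\mathcal{G}^*$.

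The identification comes from the Plücker geometry already exploited in Lemma~\ref{lem: moduli}. Over a point $s \in S$ lying above $p \in \P^2 \setminus C$, global generation of $\mathcal{E}_s$ gives an embedding $X \hookrightarrow \Gr(2, \H^0(X,\mathcal{E}_s))$ realizing the smooth quadric fourfold $Z_p$ as $\Gr(2, \H^0(X,\mathcal{E}_s))$. One of the two rulings of $Z_p$ by maximal isotropic planes is canonically $\P(\H^0(X,\mathcal{E}_s)) = \P(\mathcal{G}_s)$, and this is exactly the ruling selected over $s$ by the double cover $S \to \P^2$; the two rulings over $p$ correspond to the two points of $S$ above $p$, that is, to the two bundles $E^*|_X$ and $F|_X$ interchanged by the covering involution. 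All of these identifications are built algebraically from the family of quadrics $Z \to \P^2$ and the tautological bundles on the relative Grassmannian, so they assemble over $\P^2 \setminus C$ and extend across the branch locus to an isomorphism $W \cong \P(\mathcal{G})$ over $S$. Comparing Brauer classes yields $\alpha = \beta$, the ambiguity between $\P(\mathcal{G})$ and $\P(\mathcal{G}^*)$ (classes $\beta$ and $-\beta$) being irrelevant since both are $2$-torsion.

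The main obstacle is this last step: upgrading the transparent fibrewise identification $W_s \cong \P(\H^0(X,\mathcal{E}_s))$ to a global isomorphism of Brauer-Severi varieties, which requires tracking the twisting, matching each ruling with the correct twisted universal sheaf under the covering involution, and checking that the construction behaves well over the branch sextic $C$, where $Z_p$ degenerates to a cone. As a cross-check, and as an alternative way to finish once one knows that a genuine universal sheaf exists on $X \times W$ (so that $\beta$ pulls back to $0$ in $\Br(W)$), one may invoke Amitsur's theorem: the kernel of $\Br(S) \to \Br(W)$ is the cyclic group generated by $\alpha$, forcing $\beta \in \langle \alpha \rangle = \{0,\alpha\}$, and since $\beta \neq 0$ we again conclude $\beta = \alpha$.
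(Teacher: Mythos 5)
Your proposal is correct and follows essentially the same route as the paper: both identify $W\to S$ with the projectivization of the rank-four pushforward $q_*\mathcal{U}$ of the (local/twisted) universal sheaf, using $h^0(\mathcal{E}_s)=4$ together with the correspondence between lines in $\H^0(X,\mathcal{E}_s)$ and one ruling of maximal isotropic planes in $Z_p\cong\mathrm{Gr}(2,4)$, with the covering involution supplying the other ruling. The paper phrases this via local universal sheaves $\mathcal{U}_i$ and the gerbe of line bundles $\mathcal{L}_{ij}$ rather than twisted sheaves, and concludes directly from the patching of the $\P(q_*\mathcal{U}_i)$ without needing your (somewhat circular, as you acknowledge) Amitsur cross-check; otherwise the arguments coincide.
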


\begin{prf}
Universal sheaves exist locally, so let $S=\cup_iS_i$ be a cover such that there exists a local universal sheaf $\mathcal{U}_i$ on each $X\times S_i$. Denote by $p$ and $q$ the projections from $X\times S$ to $X$ and $S$, respectively. On the overlap
$$(X\times S_i)\cap (X\times S_j)$$
$\mathcal{U}_i$ and $\mathcal{U}_j$ will differ by tensoring with $q^*\mathcal{L}_{ij}$, where $\mathcal{L}_{ij}$ is a line bundle on $S_{ij}:=S_i\cap S_j$. The collection of line bundles $\mathcal{L}_{ij}$ defines a holomorphic gerbe on $S$, whose Brauer class is the obstruction to the existence of a universal sheaf on $X\times S$.

By the claim in the proof above, $\H^0(X,\mathcal{U}_i|_{X\times \{s\}})$ is four-dimensional for all $s\in S_i$. Therefore $q_*\mathcal{U}_i$ is a locally free sheaf of rank four on $S_i$. Moreover,
$$q_*\mathcal{U}_j=q_*(q^*\mathcal{L}_{ij}\otimes\mathcal{U}_i)=\mathcal{L}_{ij}\otimes q_*\mathcal{U}_i.$$
Therefore the local $\P^3$-bundles $\P(q_*\mathcal{U}_i)$ patch together to give a globally defined $\P^3$-bundle on $S$.

\vspace*{2mm}
\noindent
{\bf Claim:} This $\P^3$-bundle can be identified with the Brauer-Severi variety $W\rightarrow S$.

\begin{prf}
Let $\mathcal{E}:=\mathcal{U}_i|_{X\times \{s\}}$. Recall that $\mathcal{E}$ is realized as a quotient
$$\H^0(X,\mathcal{E})\otimes\O\longrightarrow\mathcal{E},$$
which is the pullback of
$$\C^4\otimes\O\longrightarrow F$$
by the classifying map $X\rightarrow\mathrm{Gr}(2,4)$. Therefore a line in $\H^0(X,\mathcal{E})$ corresponds to a line $\ell$ in $\C^4$. But each line $\ell$ in $\C^4$ determines a maximal isotropic plane in $\mathrm{Gr}(2,4)$, namely, the set of planes in $\C^4$ containing $\ell$ is isomorphic to $\P(\C^4/\ell)\cong\P^2$. Thus the family of lines in $\H^0(X,\mathcal{E})$ gives one half of the Fano variety of maximal isotropic planes in $\mathrm{Gr}(2,4)$, parametrized by $\P(\C^4)=\P^3$.

To get the other half, recall that the covering involution of $S\rightarrow\P^2$ takes $s$ to the point representing the cokernel $\mathcal{E}^{\prime}$ of
$$\mathcal{E}^*\longrightarrow\H^0(X,\mathcal{E})^*\otimes\O.$$
For this sheaf $\mathcal{E}^{\prime}$, a line in $\H^0(X,\mathcal{E}^{\prime})=\H^0(X,\mathcal{E})^*$ will correspond to a line $\ell$ in $(\C^4)^*$, or equivalently, a hyperplane $\ell^{\perp}$ in $\C^4$. Each hyperplane $\ell^{\perp}$ in $\C^4$ determines a maximal isotropic plane in $\mathrm{Gr}(2,4)$, namely, the set of planes in $\C^4$ contained in $\ell^{\perp}$ is isomorphic to $\P(\ell^{\perp})\cong\P^2$. This gives the other half of the Fano variety of maximal isotropic planes in $\mathrm{Gr}(2,4)$, parametrized by $\P(\C^4)^*=(\P^3)^*$.

In summary, if the points $s$ and $s^{\prime}\in S$ sitting above $p\in\P^2$ represent sheaves $\mathcal{E}$ and $\mathcal{E}^{\prime}$ on $X$, then the family of lines in $\H^0(X,\mathcal{E})$ and $\H^0(X,\mathcal{E}^{\prime})$ can be identified with $W_p$, the Fano variety of maximal isotropic planes contained in the quadric $Z_p\cong\mathrm{Gr}(2,4)$. But this implies that the $\P^3$-bundle on $S$ given locally by $\P(q_*\mathcal{U}_i)$ is precisely the Brauer-Severi variety $W\rightarrow S$, proving the claim.
\end{prf} 

It follows from the claim that if there exists a universal sheaf $\mathcal{U}$ on $X\times S$ then the Brauer-Severi variety $W\rightarrow S$ is the projectivization of the rank four bundle $q_*\mathcal{U}$, and hence the Brauer class $\alpha$ is trivial. Conversely, if $\alpha$ is trivial then the $\P^3$-bundle is the projectivization of a rank four bundle $\mathcal{V}$ on $S$. Moreover, $\mathcal{V}$ must be locally isomorphic to $q_*\mathcal{U}_i$, i.e., it must be {\em equal\/} to $\mathcal{M}_i\otimes q_*\mathcal{U}_i$ for some line bundle $\mathcal{M}_i$ on $S_i$. Then the local universal sheaves $q^*\mathcal{M}_i\otimes\mathcal{U}_i$ on $X\times S_i$ will patch together to give a global universal sheaf $\mathcal{U}$ on $X\times S$.
\end{prf}

\subsection{The degree eighteen/degree two duality}
\label{ss: 18to2}

The fact that general K3 surfaces of degrees four, six, and eight are complete intersections is classical. Mukai~\cite{mukai88} extended this analysis by showing that K3 surfaces of degrees ten to eighteen are linear sections of homogeneous varieties. In particular, a general degree eighteen K3 surface $X$ is a linear section of a certain homogeneous variety $Y:=G_2/P$. This homogenous variety $Y$ is five-dimensional and embeds in $\P(V)=\P^{13}$ (here $V$ is the adjoint representation of $G_2$, and $Y$ is the orbit of the maximal weight vector). The K3 surface $X$ will be the intersection of $Y\subset\P^{13}$ with a codimension three linear subspace $\P(U)=\P^{10}$.

As before, we projectively dualize. The dual variety $\check{Y}$ is again a sextic hypersurface in $\check{\P}^{13}=\P(V^*)$ and $\P(U^{\perp})=\P^2$ intersects this hypersurface in a plane sextic curve $C$. So once again the projective dual of $X$ is a degree two K3 surface $S$, the double cover of $\P^2$ branched over $C$. The geometry of this projective duality was studied extensively by Kapustka and Ranestad~\cite{kr10}, and we shall use their results below.

\begin{lem}
The K3 surface $S$ can be naturally identified with the moduli space $M_X(3,h,3)$.
\end{lem}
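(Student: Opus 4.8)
The plan is to follow the proof of Lemma~\ref{lem: moduli} verbatim in outline, replacing the smooth quadric fourfolds $Z_p \cong \Gr(2,4)$ and their tautological rank-two bundles by the smooth hyperplane sections of the $G_2$-adjoint variety $Y = G_2/P$ and the natural rank-three bundles carried by these Fano fourfolds, as worked out by Kapustka and Ranestad~\cite{kr10}. First I would set up the dual incidence correspondence: a point $p \in \P^2 = \P(U^\perp)$ corresponds to a hyperplane $H_p \subset \P^{13}$ whose intersection $Z_p := Y \cap H_p$ is a four-dimensional linear section of $Y$ containing $X$. For $p \notin C$ this section is a smooth Fano fourfold, while for $p \in C$ it acquires a singularity, playing the role that the drop in the rank of the quadric played along $C$ in the degree eight case; thus the net $\{Z_p\}_{p \in \P^2}$ of Fano fourfolds degenerates exactly over the branch sextic $C$.

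Next I would extract, for each smooth $Z_p$, the pair of natural rank-three bundles supplied by the $G_2$-geometry of~\cite{kr10} --- the analogues of the two tautological bundles $E^*$ and $F$ on $\Gr(2,4)$ that are interchanged by the two families of maximal isotropic planes. Restricting these bundles to $X \subset Z_p$ yields stable bundles on $X$, and a Chern-class computation (again following~\cite{kr10}) should show they have Mukai vector $v = (3,h,3)$; since $v^2 = b^2h^2 - 2ac = 18 - 18 = 0$, the moduli space $M_X(v)$ is two-dimensional, i.e.\ a K3 surface, as recorded in Section~\ref{ss: mukai set-up}. The two bundles coalesce into a single bundle precisely when $p \in C$ and $Z_p$ degenerates, so the assignment $p \mapsto [\mathcal{E}]$ factors through the double cover $S \to \P^2$ branched along $C$ and defines a morphism $S \to M_X(3,h,3)$.

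It then remains to show this morphism is an isomorphism. The quickest route is the one indicated in the Remark following Lemma~\ref{lem: moduli}: the family of sheaves constructed above is a complete two-dimensional family, so $S$ must exhaust the two-dimensional moduli space $M_X(v)$. Alternatively one can run the converse argument of Lemma~\ref{lem: moduli}, showing that an arbitrary stable $\mathcal{E}$ with $v(\mathcal{E}) = (3,h,3)$ satisfies $\H^1(X,\mathcal{E}) = \H^2(X,\mathcal{E}) = 0$ (by Serre duality together with a destabilizing-subsheaf analysis), is globally generated with $h^0(X,\mathcal{E}) = 6$ by Riemann-Roch, and hence determines a classifying map realizing $X$ inside some smooth $Z_p$ and exhibiting $\mathcal{E}$ as one of the tautological restrictions.

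The main obstacle is the geometric input I am quoting from~\cite{kr10}: identifying the two natural rank-three bundles on a smooth hyperplane section of $G_2/P$, computing that their restrictions to $X$ have Mukai vector $(3,h,3)$ and are stable, and making precise the duality that interchanges them generically and degenerates them over $C$. Once these facts about the $G_2$-geometry are in hand, the homological bookkeeping (cohomology vanishing, global generation, and completeness of the family) is routine and parallels the degree eight case.
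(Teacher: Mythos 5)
Your proposal matches the paper's proof essentially verbatim: the paper likewise invokes Theorem~1.2 of Kapustka--Ranestad for the net of genus-ten, index-two Fano fourfolds $Z_p = Y \cap H_p$, takes the pair of rank-three bundles coming from the embedding $Z_p \hookrightarrow \mathrm{Gr}(3,6)$ (restrictions of $E^*$ and $F$), restricts them to $X$ to get stable bundles with Mukai vector $(3,h,3)$ that coalesce over the sextic $C$, and concludes by completeness of the resulting two-dimensional family. The alternative converse argument you sketch (cohomology vanishing, $h^0 = 6$, global generation, classifying map) is not carried out in the paper for this case, but the primary route you propose is exactly the one the authors take.
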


\begin{prf}
This is Theorem~1.2 of~\cite{kr10}. A point $p$ in $\P^2\subset\check{\P}^{13}$ corresponds to a hyperplane $H_p$ in $\P^{13}$, which intersects $Y\subset\P^{13}$ in a Fano fourfold $Z_p$ of genus ten and index two. The picture is:
$$\begin{array}{ccccccc}
 & & & \mbox{dually} & & & \\
Y=G_2/P & \hookrightarrow & {\P}(V)={\P}^{13} & & \check{Y} & \hookrightarrow & {\P}(V^*)=\check{\P}^{13} \\
\cup & & \cup & & \cup & & \cup \\
Z_p & \hookrightarrow & H_p={\P}^{12} & & C & \hookrightarrow & {\P}(U^{\perp})={\P}^2 \\
\cup & & \cup & & & & \cup \\
X & \hookrightarrow & {\P}(U)={\P}^{10} & & & & p \\
\end{array}$$
If $p\in\P^2\backslash C$, then $Z_p$ is smooth. Kuznetsov~\cite{kuznetsov06} showed that $Z_p$ admits a pair of vector bundles of rank three, each with six independent sections. This result was clarified by Kapustka and Ranestad, who showed that $Z_p$ admits a unique embedding as a linear section of the Grassmannian $\mathrm{Gr}(3,6)$, up to automorphisms of $\mathrm{Gr}(3,6)$ of course. Denoting the universal bundle and universal quotient bundle on the Grassmannian by $E$ and $F$ respectively, the required rank three bundles on $Z_p$ are precisely the restrictions of $E^*$ and $F$. Further restricting the bundles to $X\subset Z_p$ yields two stable vector bundles on $X$ with Mukai vectors $v=(3,h,3)$. 

If $p\in C$, then $Z_p$ is singular and the pair of bundles on $X$ degenerate to isomorphic bundles in this case. Thus the double cover $S$ of $\P^2$ branched over $C$ naturally parametrizes a family of stable bundles on $X$ with Mukai vectors $v=(3,h,3)$. Since this is a complete family, and the moduli space $M_X(v)$ is two-dimensional, we conclude that $S\cong M_X(3,h,3)$.
\end{prf}

\begin{lem}
The K3 surface $S$ comes with a Brauer-Severi variety $W\rightarrow S$ whose fibres are isomorphic to $\P^2$.
\end{lem}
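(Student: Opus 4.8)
The plan is to mirror the proof of Lemma~\ref{lem: BSvariety}, replacing the family of quadric fourfolds and their maximal isotropic planes by the net $Z \to \P^2$ of Fano fourfolds $Z_p = Y \cap H_p$ and the cubic surface scrolls they contain. Concretely, I would define $W \to \P^2$ to be the relative Fano scheme parametrizing cubic surface scrolls contained in the fibres of $Z \to \P^2$, and then analyze its fibres according to whether $p$ lies on the sextic curve $C$.

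First I would treat the generic fibre. For $p \in \P^2 \setminus C$ the fourfold $Z_p$ is smooth, and by Kapustka--Ranestad~\cite{kr10} (building on Kuznetsov~\cite{kuznetsov06}) it admits an essentially unique embedding as a linear section of $\mathrm{Gr}(3,6)$, unique up to the action of $\Aut(\mathrm{Gr}(3,6))$. Since $\Aut(\mathrm{Gr}(3,6))$ has two connected components --- the identity component $\PGL_6$ and the component containing the duality $W \mapsto W^{\perp}$ on $\C^6$ --- the cubic scrolls on $Z_p$ fall into two families interchanged by this duality, exactly as the two $\P^3$-families of maximal isotropic planes on a smooth quadric fourfold are interchanged by the nontrivial component of $\Aut(\mathrm{Gr}(2,4))$ in Lemma~\ref{lem: BSvariety}. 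These two families are cut out by the restrictions of the universal bundle $E^*$ and quotient bundle $F$ appearing in the identification $S \cong M_X(3,h,3)$ above, and I would check that each family is parametrized by a $\P^2$. Thus $W_p$ is a disjoint union of two copies of $\P^2$.

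Next I would treat the fibres over $C$. For $p \in C$ the fourfold $Z_p$ is singular; as noted in the identification $S \cong M_X(3,h,3)$ (following~\cite{kr10}), the bundles $E^*$ and $F$ then degenerate to isomorphic bundles, and correspondingly the two families of cubic scrolls merge into a single $\P^2$, so $W_p \cong \P^2$. It follows that $W \to \P^2$ factors through the double cover $S \to \P^2$ branched along $C$ --- this is the Stein factorization --- yielding $W \to S \to \P^2$ with every fibre of $W \to S$ isomorphic to $\P^2$. Finally, to conclude that $W \to S$ is a genuine Brauer--Severi variety, i.e.\ \'etale-locally a $\P^2$-bundle rather than merely a fibration with $\P^2$ fibres, I would argue as in Proposition~3.3 of~\cite{hvv11}.

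The main obstacle I expect is the generic-fibre analysis: pinning down the cubic surface scrolls on $Z_p$, showing that they form precisely two families each isomorphic to $\P^2$, and matching these families with the bundles $E^*$ and $F$. This is the genuinely geometric input, and it is where the detailed results of Kapustka--Ranestad~\cite{kr10} are essential; by contrast, the degeneration over $C$ and the Stein-factorization and Brauer--Severi bookkeeping are routine once the generic picture is in place.
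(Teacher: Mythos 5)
Your proposal is correct and follows essentially the same route as the paper: define $W\to\P^2$ as the Fano variety of cubic surface scrolls in the fibres of $Z\to\P^2$, show the fibre is two disjoint copies of $\P^2$ off the sextic $C$ and a single $\P^2$ over $C$, and conclude via Stein factorization. The paper handles the generic-fibre step you flag as the main obstacle simply by citing Proposition~1.5 of~\cite{kr10}, which states directly that a smooth $Z_p$ contains exactly two disjoint $\P^2$-families of cubic surface scrolls, so your proposed derivation via the two components of $\Aut(\mathrm{Gr}(3,6))$ is extra machinery the paper does not need.
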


\begin{prf}
We have a family $Z\rightarrow\P^2$ of Fano fourfolds of genus ten and index two over $\P^2$. Let $W\rightarrow\P^2$ be the Fano variety of cubic surface scrolls contained in the fibres of $Z\rightarrow\P^2$. If $p\in\P^2\backslash C$ then $Z_p$ is smooth and Proposition~1.5 of~\cite{kr10} states that there are two disjoint $\P^2$-families of cubic surface scrolls on $Z_p$. Therefore the fibre $W_p$ will consist of two copies of $\P^2$. If $p\in C$ then $Z_p$ is singular, the above families degenerate to a single $\P^2$-family of cubic surface scrolls, and the fibre $W_p$ is a single $\P^2$.

The morphism $W\rightarrow\P^2$ therefore factors through the double cover of $\P^2$ branched over $C$, i.e., it factors through $W\rightarrow S\rightarrow\P^2$ (this is the Stein factorization). Then $W\rightarrow S$ is the required Brauer-Severi variety, with fibres isomorphic to $\P^2$.
\end{prf}

\begin{rmk}
The Brauer-Severi variety $W\rightarrow S$ gives a class $\alpha$ in the Brauer group of $S$ whose order divides three. If $\alpha$ is non-trivial, it will therefore be $3$-torsion. Non-triviality will follow from the next lemma.
\end{rmk}

\begin{lem}
The class $\alpha\in\Br(S)$ of the Brauer-Severi variety $W\rightarrow S$ is the obstruction to the existence of a universal sheaf for the moduli space $S=M_X(3,h,3)$.
\end{lem}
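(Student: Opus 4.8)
The plan is to run the proof of Lemma~\ref{obstruction} essentially verbatim down to the point where the moduli construction produces a projective bundle, and then to do the extra work of matching that bundle with $W$ at the level of Brauer classes. First I would choose an open cover $S=\bigcup_i S_i$ on which local universal sheaves $\calU_i$ exist on $X\times S_i$, write $p,q$ for the two projections from $X\times S$, and record that on overlaps $\calU_i$ and $\calU_j$ differ by $q^*\calL_{ij}$ for line bundles $\calL_{ij}$ on $S_{ij}$. The cocycle $\{\calL_{ij}\}$ defines a gerbe on $S$ whose class $\beta\in\Br(S)$ is by construction the obstruction to globalizing the $\calU_i$ into a universal sheaf, and by C\u{a}ld\u{a}raru~\cite{caldararu00,caldararu02} there is a $\beta$-twisted universal sheaf on $X\times S$. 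As in Section~\ref{ss: 8to2}, stability of $\calE$ together with Riemann--Roch forces $h^0(X,\calE)=\chi(\calE)=-\langle(3,h,3),(1,0,1)\rangle=6$ and $h^1=h^2=0$ for every $\calE\in M_X(3,h,3)$; this is exactly Kuznetsov's count of six independent sections~\cite{kuznetsov06}. Hence $q_*\calU_i$ is locally free of rank six, the bundles $\P(q_*\calU_i)$ patch to a $\P^5$-bundle over $S$, and its Brauer class is $\beta$.

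The crux is to reconcile this $\P^5$-bundle with the $\P^2$-bundle $W\to S$ of cubic surface scrolls. Unlike the degree eight case, where the projectivized pushforward was literally the Fano variety of maximal isotropic planes, here the moduli construction yields a non-minimal Brauer-Severi variety, so the two bundles cannot be identified directly; I would instead argue at the level of Brauer classes. Using the geometry of Kapustka--Ranestad~\cite{kr10}, over a point $s\in S$ representing $\calE$ the corresponding $\P^2$-family of cubic surface scrolls on the smooth fibre $Z_p$, a linear section of $\mathrm{Gr}(3,6)$, is cut out canonically from the six-dimensional space $\H^0(X,\calE)$, realizing it as $\P$ of a three-dimensional space naturally attached to the twisted universal sheaf. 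These spaces assemble into a $\beta$-twisted rank-three bundle $\calA$ on $S$ with $W=\P(\calA)$. Because the projectivization of a $\beta$-twisted bundle has Brauer class $\beta$, this gives $[W]=\beta$. The rank discrepancy is explained by an expected canonical decomposition $q_*\calU_i\cong\calA_i\otimes\calB_i$ with $\calB_i$ an honest rank-two bundle on $S_i$ (the factorization $6=3\cdot 2$ of the sections); since tensoring by an untwisted bundle preserves the Brauer class, this also recovers $[\P(q_*\calU_i)]=\beta$.

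Finally, as in Lemma~\ref{obstruction}, a universal sheaf exists on $X\times S$ if and only if $\beta$ is trivial: if $\calU$ exists then $\P(q_*\calU)$, and with it $W$, is the projectivization of an honest bundle; conversely, triviality of $\beta$ furnishes line bundles $\calM_i$ on $S_i$ so that the sheaves $q^*\calM_i\otimes\calU_i$ patch to a global universal sheaf. Combining this with the previous paragraph shows that $\alpha=[W]=\beta$ is precisely the obstruction to the existence of a universal sheaf; in particular $\alpha$ is nontrivial because $n=\gcd(3,h^2,3)=3>1$ makes $M_X(3,h,3)$ a non-fine moduli space, which is the nontriviality promised in the remark preceding the statement.

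I expect the main obstacle to be the middle step: producing the canonical $\beta$-twisted rank-three lift $\calA$ of $W$ from the cubic-scroll geometry, and thereby pinning down $[W]$ to be $\beta$ itself rather than merely an element of the same cyclic subgroup. This requires the fine structure of cubic surface scrolls inside $\mathrm{Gr}(3,6)$ in~\cite{kr10}, in place of the elementary linear algebra of maximal isotropic planes used in the degree eight case. Should a clean construction of $\calA$ (or of the decomposition $q_*\calU_i\cong\calA_i\otimes\calB_i$) prove elusive, a serviceable fallback is to show directly that $W$ admits a $\beta$-twisted rank-three lift but no untwisted lift unless $\beta$ is trivial; this already forces $[W]\in\langle\beta\rangle$ and yields the nontriviality of $\alpha$ needed in the sequel.
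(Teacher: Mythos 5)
Your setup and the first half of the argument (local universal sheaves $\calU_i$, the gerbe class $\beta$ obstructing a global universal sheaf, the computation $h^0(X,\calE)=6$, and the resulting global $\P^5$-bundle of Brauer class $\beta$) match the paper. The gap is in your middle step. You posit a canonical $\beta$-twisted rank-three lift $\calA$ of $W$ together with a decomposition $q_*\calU_i\cong\calA_i\otimes\calB_i$ with $\calB_i$ an honest rank-two bundle, so as to conclude $[W]=\beta$. Neither exists, and the identity $[W]=\beta$ is false whenever $\alpha\neq 0$. What the geometry of Kapustka--Ranestad actually provides (Proposition~3.13 of~\cite{kr10}, as used in the paper) is that the $\P^2$-family $W^{(1)}_p$ of cubic surface scrolls sits inside $\P\left(\H^0(X,\calE)\right)\cong\P^5$ as a \emph{Veronese surface}, not as a linear $\P^2$; since the Veronese embedding is $\P(V)\hookrightarrow\P(\mathrm{Sym}^2V)$, the correct identification is $\P(q_*\calU_i)\cong \mathrm{Sym}^2W|_{S_i}$ (the rank six is $\dim\mathrm{Sym}^2\C^3$, not a factorization $3\cdot 2$). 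Because the symmetric square $s^2A$ of an Azumaya algebra is Brauer equivalent to $A\otimes A$ (page~33 of~\cite{kmrt98}), this gives $\beta=\alpha^2$, not $\beta=\alpha$.

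The lemma nevertheless holds, because $\alpha$ has order dividing $3$: $\alpha^2$ is trivial if and only if $\alpha$ is, so a universal sheaf exists iff $\beta=\alpha^2=0$ iff $\alpha=0$. This order-three observation, combined with the $\mathrm{Sym}^2$ identification, is exactly how the paper closes the argument, and it is the idea your proposal is missing. Note also that your fallback is too weak in the direction you actually need: establishing only $[W]\in\langle\beta\rangle$ does not exclude the scenario $\alpha=0$ with $\beta\neq 0$, so it does not yield the nontriviality of $\alpha$ from the non-fineness of $M_X(3,h,3)$. What must be shown is the reverse containment $\beta\in\langle\alpha\rangle$ (here, precisely $\beta=\alpha^2$).
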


\begin{prf}
As in Lemma~\ref{obstruction}, we let $S=\cup_iS_i$ be a cover such that there exists a local universal sheaf $\mathcal{U}_i$ on each $X\times S_i$. These local universal sheaves will differ by tensoring with $q^*\mathcal{L}_{ij}$, where $q:X\times S\rightarrow S$ is projection to the second factor, and the collection of line bundles $\mathcal{L}_{ij}$ on $S_i\cap S_j$ will define a holomorphic gerbe on $S$, whose Brauer class obstructs the existence of a universal sheaf on $X\times S$.

Applying the same argument as earlier, one can show that $\mathrm{H}^0(X,\mathcal{U}_i|_{X\times\{s\}})$ is six-dimensional for all $s\in S_i$. Therefore $q_*\mathcal{U}_i$ is a locally free sheaf of rank six on $S_i$. Moreover, $q_*\mathcal{U}_j=\mathcal{L}_{ij}\otimes q_*\mathcal{U}_i$, so the local $\mathbb{P}^5$-bundles $\mathbb{P}(q_*\mathcal{U}_i)$ patch together to give a global $\mathbb{P}^5$-bundle on $S$.

\vspace*{2mm}
\noindent
{\bf Claim:} This $\P^5$-bundle can be identified with the second symmetric power $\mathrm{Sym}^2W\rightarrow S$ of the Brauer-Severi variety $W\rightarrow S$.

\begin{rmk}
We follow the convention that applying $\mathrm{Sym}^2$ to a projective space means taking $\mathrm{Sym}^2$ of the underlying vector space, then projectivizing. Thus $\mathrm{Sym}^2W\rightarrow S$ denotes the Brauer-Severi variety that is locally given by $\mathbb{P}(\mathrm{Sym}^2E_i)\rightarrow S_i$, where $E_i\rightarrow S_i$ are rank three bundles such that $W|_{S_i}\cong\mathbb{P}(E_i)$.

In fact, this operation can be applied directly to the corresponding Azumaya algebra. In Section~5 of~\cite{suslin91}, Suslin described how to construct exterior powers $\lambda^iA$ of an Azumaya algebra $A$, an idea that was further developed by Parimala and Sridharan~\cite{ps94}. The symmetric powers $s^iA$ of an Azumaya algebra can be constructed in a similar way; for example, see Section~3.A of Knus et al.'s book~\cite{kmrt98}.
\end{rmk}

\begin{prf}
The bundle $\mathcal{E}:=\mathcal{U}_i|_{X\times\{s\}}$ is realized as a quotient
$$\mathrm{H}^0(X,\mathcal{E})\otimes\mathcal{O}\longrightarrow\mathcal{E},$$
which is the pullback of
$$\mathbb{C}^6\otimes\mathcal{O}\longrightarrow F$$
by the classifying map $X\hookrightarrow Z_p\hookrightarrow\mathrm{Gr}(3,6)$. Therefore a line in $\mathrm{H}^0(X,\mathcal{E})$ corresponds to a line $\ell$ in $\mathbb{C}^6$. The set of $3$-planes in $\mathbb{C}^6$ containing $\ell$ then determines a subvariety $T_{\ell}\subset\mathrm{Gr}(3,6)$ of codimension three, isomorphic to $\mathrm{Gr}(2,5)$. For a generic line $\ell$, $T_{\ell}\cap Z_p$ will be a curve. However, for some choices of $\ell$, the intersection $T_{\ell}\cap Z_p$ is not transversal; instead, $T_{\ell}\cap Z_p$ is a cubic surface scroll (two-dimensional). Moreover, the set
$$W^{(1)}_p:=\{\ell\subset\mathbb{C}^6|T_{\ell}\cap Z_p\mbox{ is a cubic surface scroll}\}$$
is isomorphic to $\mathbb{P}^2$, embedded as a Veronese surface in the space $\mathbb{P}^5$ of all lines in $\mathbb{C}^6$ (see Section~3 of~\cite{kr10}, particularly Proposition~3.13). Since the Veronese embedding is given by the second symmetric power,
$$\mathbb{P}(\mathbb{C}^3)\hookrightarrow\mathbb{P}(\mathrm{Sym}^2\mathbb{C}^3),$$
we see that the family of lines in $\mathrm{H}^0(X,\mathcal{E})$ can be canonically identified with $\mathrm{Sym}^2W^{(1)}_p$, where $W^{(1)}_p\cong\mathbb{P}^2$ is one half of the Fano variety $W_p$ of cubic surface scrolls in $Z_p$.

To recover the other half of the Fano variety $W_p$, we consider instead hyperplanes in $\mathrm{H}^0(X,\mathcal{E})$. These correspond to hyperplanes $\ell^{\perp}$ in $\mathbb{C}^6$, which determine subvarieties $T_{\ell^{\perp}}\subset\mathrm{Gr}(3,6)$, again isomorphic to $\mathrm{Gr}(3,5)\cong\mathrm{Gr}(2,5)$, parametrizing $3$-planes in $\ell^{\perp}$. The set
$$W^{(2)}_p:=\{\ell^{\perp}\subset\mathbb{C}^6|T_{\ell^{\perp}}\cap Z_p\mbox{ is a cubic surface scroll}\}$$
is isomorphic to $\mathbb{P}^2$, again embedded as a Veronese surface in the space $(\mathbb{P}^5)^*$ of all hyperplanes in $\mathbb{C}^6$. Thus the family of hyperplanes in $\mathrm{H}^0(X,\mathcal{E})$ can be canonically identified with $\mathrm{Sym}^2W^{(2)}_p$, where $W^{(2)}_p$ is the other half of the Fano variety $W_p$ of cubic surface scrolls in $Z_p$. 

Finally, we observe that the covering involution of $S\rightarrow\mathbb{P}^2$ takes the point $s$ representing $\mathcal{E}$ to the point representing the cokernel $\mathcal{E}^{\prime}$ of
$$\mathcal{E}^*\longrightarrow\mathrm{H}^0(X,\mathcal{E})^*\otimes\mathcal{O}.$$
Then a hyperplane in $\mathrm{H}^0(X,\mathcal{E})$ will correspond to a line in $\mathrm{H}^0(X,\mathcal{E}^{\prime})\cong\mathrm{H}^0(X,\mathcal{E})^*$.

Altogether, we have shown that the $\mathbb{P}^5$-bundle on $S$ given locally by $\mathbb{P}(q_*\mathcal{U}_i)$ is precisely the Brauer-Severi variety $\mathrm{Sym}^2W\rightarrow S$, proving the claim.
\end{prf}

The symmetric power $s^2A$ of an Azumaya algebra is Brauer equivalent to $A\otimes_k A$ (see page~33 of~\cite{kmrt98}). Equivalently, the Brauer class of $\mathrm{Sym}^2W\rightarrow S$ is the same as the Brauer class of the tensor product $\otimes^2W\rightarrow S$, which is given by $\alpha^2$. If there exists a universal sheaf $\mathcal{U}$ on $X\times S$, then the claim implies that the Brauer-Severi variety $\mathrm{Sym}^2W\rightarrow S$ will be the projectivization of the rank six bundle $q_*\mathcal{U}$, and hence its Brauer class $\alpha^2$ will be trivial. Since the order of $\alpha$ divides three, we conclude that $\alpha$ is trivial.

Conversely, if $\alpha$ is trivial then the $\mathbb{P}^5$-bundle is the projectivization of a rank six bundle $\mathcal{V}$ on $S$. Moreover, $\mathcal{V}$ is locally isomorphic to $q_*\mathcal{U}_i$, i.e., {\em equal\/} to $\mathcal{M}_i\otimes q_*\mathcal{U}_i$ for some line bundle $\mathcal{M}_i$ on $S_i$. The local universal sheaves $q^*\mathcal{M}_i\otimes\mathcal{U}_i$ will then patch together to give a global universal sheaf $\mathcal{U}$ on $X\times S$.
\end{prf}

\subsection{The degree sixteen/degree four duality}
\label{ss: 16to4}

By Mukai's results~\cite{mukai88} every degree sixteen K3 surface $X$ is a linear section of the Lagrangian Grassmannian $Y:=\mathrm{LGr}(3,6)$. The homogeneous variety $Y$ is six-dimensional and embeds in $\P(V)=\P^{13}$. The K3 surface $X$ will be the intersection of $Y\subset\P^{13}$ with a codimension four linear subspace $\P(U)=\P^9$.

As before, we projectively dualize. The dual variety $\check{Y}$ is a quartic hypersurface in $\check{\P}^{13}=\P(V^*)$ and $\P(U^{\perp})=\P^3$ intersects this hypersurface in a quartic K3 surface $S$. This is the projective dual of $X$. It was studied by Iliev and Ranestad~\cite{ir05, ir07}.

\begin{lem}
The K3 surface $S$ can be naturally identified with the moduli space $M_X(2,h,4)$.
\end{lem}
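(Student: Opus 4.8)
The plan is to follow the template of the two preceding dualities (the identifications $S \cong M_X(2,h,2)$ of Lemma~\ref{lem: moduli} and $S\cong M_X(3,h,3)$), replacing the quadric and $G_2/P$ geometry by the geometry of the Lagrangian Grassmannian $Y = \mathrm{LGr}(3,6)$ and its dual quartic, as worked out by Iliev and Ranestad~\cite{ir05,ir07}. The first step is the usual geometric dictionary. Since $\P(U) \subseteq H_p$ for every $p \in \P(U^\perp) = \P^3$, a point $p$ determines a hyperplane $H_p \subset \P(V) = \P^{13}$ with $X = Y \cap \P(U) \subseteq Z_p := Y \cap H_p$, where $Z_p$ is a five-dimensional linear section of $\mathrm{LGr}(3,6)$. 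The section $Z_p$ is smooth exactly when $p \notin \check Y$, that is, when $p \notin S$, and it acquires a singularity precisely over the quartic $S = \check Y \cap \P^3$.

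The key step is to attach to each $p \in S$ a stable sheaf $\mathcal E_p$ on $X$ with Mukai vector $v = (2,h,4)$. Unlike the two previous dualities, where $S$ appeared as a double cover of $\P^2$ branched along the locus of singular sections, here $S = \check Y \cap \P^3$ \emph{is} the locus of singular sections, so one expects $\mathcal E_p$ to be governed by the geometry of the singular section $Z_p$ --- for instance by its singular point, a distinguished Lagrangian $3$-plane of $\C^6$ lying off $X$. The correct rank-two sheaf must be extracted from this data (say as a kernel or cokernel of a natural map built from the tautological Lagrangian sub-bundle $E$ and its dual $E^*$ on $\mathrm{LGr}(3,6)$); note in particular that, since these tautological bundles have rank three while the target Mukai vector has rank two, one cannot merely restrict a tautological bundle to $X$ as in the degree eight and degree eighteen cases. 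One then checks via Riemann--Roch that $\mathcal E_p$ has $v^2 = 16 - 16 = 0$ and $\chi = 6$ (so, by stability, $h^0 = 6$), and that it is stable, so that $\mathcal E_p \in M_X(2,h,4)$. This construction of the correct rank-two sheaves, together with the verification of their Mukai vector and stability, is the geometric heart of the matter, and is exactly the input we take from Iliev and Ranestad~\cite{ir05,ir07}; it is also where the main obstacle lies, precisely because the rank-three/rank-two discrepancy rules out the direct restriction argument used before.

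With the family in hand, the conclusion is formal. The construction produces a morphism $S \to M_X(2,h,4)$ to the moduli space of stable sheaves with Mukai vector $v = (2,h,4)$; by the general theory recalled in Section~\ref{ss: mukai set-up}, this $M_X(v)$ is a two-dimensional irreducible symplectic variety, hence a K3 surface, of degree $2ac/\gcd(a,c)^2 = 4$. Since $S$ is itself a (quartic) K3 surface, it suffices to argue that the morphism is dominant, equivalently that the family of sheaves is complete; then, source and target both being K3 surfaces, the map is an isomorphism, exactly as in the remark following Lemma~\ref{lem: moduli}. Thus the only nonformal ingredient is the middle step: producing the rank-two stable sheaves on $X$ from the singular Lagrangian sections $Z_p$ and verifying completeness, all of which rest on the detailed study of $\mathrm{LGr}(3,6)$ and its dual quartic in~\cite{ir05,ir07}.
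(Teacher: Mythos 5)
Your proposal follows the same overall strategy as the paper: exhibit $S$ as the parameter space of a complete family of stable sheaves on $X$ with Mukai vector $(2,h,4)$, citing Iliev--Ranestad for the construction of the sheaves and for their stability, and then conclude from the two-dimensionality of $M_X(v)$ that the classifying map is an isomorphism of K3 surfaces. The one point where your account diverges from the paper is your guess at how the rank-two sheaf is produced. You suggest a kernel or cokernel of a map built from the rank-three tautological bundles on $\mathrm{LGr}(3,6)$, and you assert that the rank-three/rank-two discrepancy rules out a direct restriction argument. In fact the paper's construction (via Theorems~3.3.4 and~3.4.8 of~\cite{ir05}) \emph{is} a restriction argument, just not from $\mathrm{LGr}(3,6)$: for $p\in S$ the singular fivefold $Z_p$ has a single node, which lies off $X$, and projection from that node embeds the blow-up $\tilde{Z}_p$ as a linear section of the ordinary Grassmannian $\mathrm{Gr}(2,6)\subset\P^{14}$; the sheaf attached to $p$ is then the restriction to $X\subset\tilde{Z}_p\subset\mathrm{Gr}(2,6)$ of the dual $E^*$ of the rank-two universal subbundle of $\mathrm{Gr}(2,6)$. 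Since you explicitly flag your description as a placeholder and delegate the actual construction to~\cite{ir05,ir07} --- exactly as the paper does --- this is not a logical gap, but the mechanism is worth recording correctly.
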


\begin{prf}
This is Theorem~3.4.8 of~\cite{ir05}. A point $p$ in $\check{Y}$ corresponds to a hyperplane $H_p$ in $\P^{13}$ that is tangent to $Y\subset\P^{13}$. In particular, if $p\in S\subset\check{Y}$, then $H_p$ intersects $Y$ in a singular fivefold $Z_p$. The picture is:
$$\begin{array}{ccccccc}
 & & & \mbox{dually} & & & \\
Y=\mathrm{LGr}(3,6) & \hookrightarrow & {\P}(V)={\P}^{13} & & \check{Y} & \hookrightarrow & {\P}(V^*)=\check{\P}^{13} \\
\cup & & \cup & & \cup & & \cup \\
Z_p & \hookrightarrow & H_p={\P}^{12} & & S & \hookrightarrow & {\P}(U^{\perp})={\P}^3 \\
\cup & & \cup & & \cup & & \\
X & \hookrightarrow & {\P}(U)={\P}^9 & & p & & \\
\end{array}$$
The five-fold $Z_p$ will have a single node at the point of tangency of $H_p$ and $Y$. Projecting from this node yields an embedding of the blow-up $\tilde{Z}_p$ in $\P^{11}$. Note that $Z_p$ is degree sixteen in $\P^{12}$, whereas $\tilde{Z}_p$ will be degree fourteen in $\P^{11}$. In fact, by Theorem~3.3.4 of~\cite{ir05}, $\tilde{Z}_p$ embeds as a linear section of the Grassmannian $\mathrm{Gr}(2,6)$, which itself embeds as a degree fourteen subvariety of $\P^{14}$. The picture is:
$$\begin{array}{ccc}
\mathrm{Gr}(2,6) & \hookrightarrow & {\P}^{14} \\
\cup & & \cup \\
\tilde{Z}_p & \hookrightarrow & {\P}^{11} \\
\end{array}$$
The node of $Z_p$ does not lie on the K3 surface $X$, so that the embedding $X\subset Z_p$ lifts to an embedding $X\subset \tilde{Z}_p$. Composing this with the embedding $\tilde{Z}_p\subset\mathrm{Gr}(2,6)$ yields an embedding of $X$ in the Grassmannian. We then obtain a rank two bundle on $X$ by restricting the dual $E^*$ of the universal bundle of the Grassmannian. Iliev and Ranestad prove that this vector bundle on $X$ is stable with Mukai vector $v=(2,h,4)$.

Thus the quartic K3 surface $S$ naturally parametrizes a family of stable bundles on $X$ with Mukai vectors $v=(2,h,4)$. Since this is a complete family, and the moduli space $M_X(v)$ is two-dimensional, we conclude that $S\cong M_X(2,h,4)$.
\end{prf}

\begin{que}
The moduli space $M_X(2,h,4)$ is not fine. Rather, there is a $2$-torsion Brauer class on the K3 surface $S=M_X(2,h,4)$ obstructing the existence of a universal sheaf. Can a Brauer-Severi variety $W\rightarrow S$ representing this Brauer class be described in a natural way? Does it have fibres isomorphic to $\P^1$ or to $\P^3$?
\end{que}

\begin{rmk}
Kuznetsov also studied projective duality for the Lagrangian Grassmannian $\mathrm{LGr}(3,6)$, in Section~7 of~\cite{kuznetsov06}. He constructed a conic bundle over the smooth part of the quartic hypersurface $\check{Y}$ (Lemma~7.8~\cite{kuznetsov06}). This $\P^1$-bundle is a Brauer-Severi variety representing the Brauer class of a certain Azumaya algebra on (the smooth part of) $\check{Y}$ (Proposition~7.9~\cite{kuznetsov06}). When restricted to $S\subset\check{Y}$, presumably this Brauer class gives the obstruction to a universal sheaf for the moduli space $S=M_X(2,h,4)$, and the conic bundle gives $W\rightarrow S$. Assuming this is true, we would still like to find an interpretation of $W\rightarrow S$ in terms of Fano varieties of the hyperplane sections $Z_p$, as in the previous examples.
\end{rmk}

\section{Other dualities}

Other examples of projective dualities do not appear to lead to Brauer elements on K3 surfaces. A K3 surface $X$ of degree fourteen embeds as a linear section of the Grassmannian $Y:=\mathrm{Gr}(2,6)\subset\P^{14}$. The dual variety $\check{Y}\subset\check{\P}^{14}$ is a cubic hypersurface, and the projective dual of $X$ is a Pfaffian cubic fourfold $F$. The picture is:

$$\begin{array}{ccccccc}
 & & & \mbox{dually} & & & \\
Y=\mathrm{Gr}(2,6) & \hookrightarrow & {\P}^{14} & & \check{Y} & \hookrightarrow & \check{\P}^{14} \\

\cup & & \cup & & \cup & & \cup \\
X & \hookrightarrow & {\P}^8 & & F & \hookrightarrow & {\P}^5 \\
\end{array}$$
Beauville and Donagi~\cite{bd85} showed that the Fano variety of lines on a (general) cubic fourfold is a four-dimensional holomorphic symplectic manifold, and in particular, the Fano variety of lines on the Pfaffian cubic $F$ is isomorphic to the Hilbert scheme $\mathrm{Hilb}^{[2]}X$ of two points on $X$. We can write $\mathrm{Hilb}^{[2]}X$ as a Mukai moduli space $M_X(1,0,-1)$. Unfortunately it is a fine moduli space, so it does not come with a Brauer element.

Iliev and Ranestad~\cite{ir01} associated a second cubic fourfold to a degree fourteen K3 surface $X$, which they called the {\em apolar cubic\/}. The Fano variety of lines on the apolar cubic parametrizes presentations of the Pfaffian cubic as a sum of ten cubes. This Fano variety is also isomorphic to $\mathrm{Hilb}^{[2]}X$, though with a different polarization. So again there is no Brauer element; in any case, the apolar cubic does not arise from projective duality.

Another duality, studied by Iliev and Markushevich~\cite{im04}, is between pairs of degree twelve K3 surfaces. A K3 surface $X$ of degree twelve is a linear section of a ten-dimensional spinor variety in $\P^{15}$. The projective dual is another K3 surface $S$ of degree twelve, which can be identified with $M_X(2,h,3)$. However, since this is a fine moduli space, it does not come with a Brauer element.

There are also interesting derived equivalences between these dual varieties. The appropriate machinery is Kuznetsov's Homological Projective Duality, and some of these examples are studied from that point of view in~\cite{kuznetsov06,kuznetsov07}.

\section{Application: Failure of weak approximation}
\label{S: Failure of WA}

	Let $X$ be a K3 surface of degree eight over a number field $k$, given as a complete intersection of three quadrics in $\P^5$
	\[
		X = V\left(Q_1, Q_2, Q_3\right) \subseteq \P^5 = \Proj k[x_0,\dots,x_5].
	\]
	In this section we give an explicit description, in terms of quaternion algebras over function fields, for the class $\alpha \in \Br(S)$ that obstructs the existence of a universal sheaf on the Mukai moduli space $S = M_X(2,h,2)$ described in~\S\ref{ss: 8to2}. We then use the description of $\alpha$ to exhibit K3 surfaces $S$ of degree two that fail to satisfy weak approximation on account of $\alpha$, via a Brauer-Manin obstruction.
	
	 The incidence correspondence 
	 \[
	 	Z := \{ xQ_1 + yQ_2 + zQ_3 = 0\} \subset \P^2\times \P^5,
	 \]
	 has $X$ as it base locus, and projection map $Z \to \P^2$ is a family of quadric fourfolds.  Let $W \to \P^2$ be the Fano variety of maximal isotropic planes contained in the fibers of $Z \to \P^2$. We saw in Lemmas~\ref{lem: BSvariety} and~\ref{lem: BSorderTwo} (and their proofs) that the Stein factorization $W \to S \to \P^2$ consists of the discriminant cover $S \to \P^2$ and a Brauer-Severi variety $W \to S$ that is \'etale-locally a $\P^3$-bundle over $S$.  The image of the corresponding Brauer class $\alpha \in \Br(S)$ in $\Br \left(k(S)\right)$ is thus an algebra of degree $4$ and exponent $2$; a result of Albert ensures that it is Brauer equivalent to a bi-quaternion algebra; see~\cite{albert}.  To compute this biquaternion algebra, we use the interpretation of $\alpha \in \Br(S)$ as the class associated to the even Clifford algebra on the discriminant cover $S \to \P^2$ (see the proof of Lemma~\ref{lem: BSorderTwo}).
	
	For $i = 1$, $2$ and $3$, let $M_i$ denote the Gram symmetric matrix associated to the quadric $Q_i$.  The Gram symmetric matrix of the quadratic form $q$ in six variables associated to $S \to \P^2$ is $M(x,y,z) := xM_0 + yM_1 + zM_2$, and the signed discriminant of $q$ is 
	\[
		\Delta := - \det(xM_0 + yM_1 + zM_2).
	\]
	Thus, we may write $S$ as the surface in $\P(1,1,1,3) = \Proj k[x,y,z,w]$ given by
	\begin{equation}
	\label{eq: S}
		w^2 = - \det\left(M(x,y,z)\right)
	\end{equation}
	 The discriminant algebra $k(\P^2)(\sqrt{\Delta})$ is the function field $k(S)$. To compute $\alpha$ as the class in $\im\left(\Br(S) \to \Br\left(k(S)\right)\right)$ of the even Clifford algebra $C_0(q)$, we recall some facts about quadratic forms.
	 
	 \medskip
	 
	 \noindent{\bf Notation} Given nonzero elements $a$ and $b$ of a field $K$, write $(a,b)$ for the quaternion algebra which, as a four-dimensional $K$-vector space, is spanned by $1$, $i$, $j$, and $ij$, with multiplication determined by the relations $i^2 = a$, $j^2 = b$ and $ij = -ji$. Abusing notation, we sometimes also denote by $(a,b)$ the class of the quaternion algebra in $\Br(K)$.

\subsection{Quadratic forms of rank 6 and the even Clifford algebras}
	
	Let $q$ be a nondegenerate quadratic form of even rank over a field $K$ of characteristic not two.  Let $\Delta$ be the signed discriminant of $q$, and let $L = K\left(\sqrt{\Delta}\right)$ be the discriminant extension (we assume that $\Delta$ is not a square in $K$). Write $c(q) \in \Br(K)$ (resp.\ $c_0(q) \in \Br(L)$) for the class of the Clifford algebra $C(q)$ (resp.\ the even Clifford algebra $C_0(q)$). A straightforward generalization of the last part of the proof of Lemma~\ref{lem: BSorderTwo} establishes the following lemma.
	
	\begin{lem}
		\label{lem: evencliff}
		We have $c_0(q) = c(q)\otimes_K L$ as classes in $\Br(L)$.
		\qed
	\end{lem}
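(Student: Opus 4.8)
The plan is to promote the asserted Brauer-class identity to an isomorphism of $L$-algebras
\[
C(q) \otimes_K L \;\cong\; M_2\!\left(C_0(q)\right),
\]
exactly as in the rank $6$ computation at the end of the proof of Lemma~\ref{lem: BSorderTwo}; passing to Brauer classes over $L$ then gives $c(q)\otimes_K L = [M_2(C_0(q))] = c_0(q)$. The two structural inputs are the standard facts (see Lam~\cite{lam05}) that, for a nondegenerate form of even rank, $C(q)$ is central simple over $K$ and, because $\Delta$ is a non-square, $C_0(q)$ is central simple over its center $L = K(\sqrt{\Delta})$. Writing $V$ for the underlying space and $C(q) = C_0(q)\oplus C_1(q)$ for the $\Z/2$-grading, I would regard $C(q)$ as a right $C_0(q)$-module. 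For $v\in V$, left multiplication $L_v$ interchanges $C_0(q)$ and $C_1(q)$ and commutes with the right $C_0(q)$-action, so in the block form determined by the grading it is the off-diagonal operator $\left(\begin{smallmatrix} 0 & v \\ v & 0\end{smallmatrix}\right)\in\End_{C_0(q)}(C(q))$. Since $L_v^2 = q(v)\cdot\mathrm{id}$, the $K$-linear map $v\mapsto L_v$ satisfies the Clifford relation, and the universal property of the Clifford algebra extends it to a $K$-algebra homomorphism $C(q)\to\End_{C_0(q)}(C(q))$. As these endomorphisms are automatically $L$-linear (they commute with the central $L\subset C_0(q)$), the map factors through an $L$-algebra homomorphism
\[
\Phi\colon C(q)\otimes_K L \longrightarrow \End_{C_0(q)}\!\left(C(q)\right).
\]

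Next I would identify the target and show $\Phi$ is bijective. Choosing an anisotropic $v_0$ (a unit of $C(q)$), left multiplication by $v_0$ gives an isomorphism $C_0(q)\xrightarrow{\sim} C_1(q)$ of right $C_0(q)$-modules; hence $C(q)\cong C_0(q)^{\oplus 2}$ is free of rank two and $\End_{C_0(q)}(C(q))\cong M_2(C_0(q))$ as $L$-algebras. Then $\Phi$ is a nonzero homomorphism out of the simple ring $C(q)\otimes_K L$ (central simple over $L$ by base change), hence injective; and the dimension bookkeeping $\dim_L\!\left(C(q)\otimes_K L\right) = \dim_K C(q) = 4\cdot\dim_L C_0(q) = \dim_L M_2(C_0(q))$ forces $\Phi$ to be surjective as well. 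This yields the desired $L$-algebra isomorphism.

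The only point genuinely requiring the \emph{generalization} flagged in the statement is the pair of structure theorems—central simplicity of $C(q)$ over $K$ and of $C_0(q)$ over $L$—now invoked for an arbitrary nondegenerate form of even rank rather than specifically rank $6$. As these are classical, I expect no real obstacle; the remaining verifications (that $v\mapsto L_v$ respects the Clifford relation, the off-diagonal block description, the freeness of $C(q)$ over $C_0(q)$, and the dimension count) are routine and formally identical to the rank $6$ case already carried out in Lemma~\ref{lem: BSorderTwo}.
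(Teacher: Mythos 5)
Your proposal is correct and follows essentially the same route as the paper, which proves the lemma by the very argument you describe: the map $v\mapsto\left(\begin{smallmatrix}0&v\\ v&0\end{smallmatrix}\right)$ into $\End_{C_0(q)}(C_0(q)\oplus C_1(q))$, the universal property of the Clifford algebra, and the identification of the target with $C_0(q)\otimes_K \M_{2\times 2}(K)$, exactly as in the last part of the proof of Lemma~\ref{lem: BSorderTwo}. You merely supply details the paper leaves implicit (injectivity via central simplicity, surjectivity via the dimension count, and freeness of $C(q)$ over $C_0(q)$), all of which are correct.
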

			
	\begin{lem}
		\label{lem: clifford}
		Let $a \in K^*$, and write $\langle a \rangle$ for the rank one quadratic form $aX^2$. Let $q$, $q_1$, and $q_2$ be nondegenerate quadratic forms of even rank, with respective signed discriminants $\Delta$, $\Delta_1$, and $\Delta_2$.
		\begin{enumerate}
		\item[(i)] $c(\langle a \rangle \otimes q) = c(q) \otimes (a,\Delta)$
		\item[(ii)] $c(q_1 \perp q_2) = c(q_1) \otimes c(q_2) \otimes (\Delta_1,\Delta_2)$
		\item[(iii)] $c(q \perp \langle a,-a\rangle) = c(q)$
		\end{enumerate}
	\end{lem}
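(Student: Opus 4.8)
The plan is to deduce all three identities from two standard structural facts about Clifford algebras over a field $K$ with $\Char K \neq 2$: the base computation $C(\langle b, c\rangle) \cong (b,c)$ for rank-two forms, and the fundamental isomorphism of $\mathbb Z/2$-graded algebras $C(q_1 \perp q_2)\cong C(q_1)\,\hat\otimes\,C(q_2)$, the graded tensor product (see, e.g., \cite{lam05} or \S8 of \cite{kmrt98}). The core of the argument is part (ii); the other two then follow quickly.

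For (ii) I would start from the graded isomorphism above. When $q_1,q_2$ have even rank, each $C(q_i)$ is central simple over $K$, and the centre of its even part $C_0(q_i)$ is the quadratic étale algebra $K[\omega_i]$ generated by the canonical product $\omega_i = e_1\cdots e_{2n_i}$ of an orthogonal basis, with $\omega_i^2 \equiv \Delta_i \bmod K^{*2}$. The graded tensor product differs from the ordinary tensor product only in that odd elements of the two factors anticommute rather than commute, and passing from one to the other is realized by conjugating by the $\omega_i$, which implement the grading involutions. This conversion alters the Brauer class by exactly the class of the quaternion algebra $(\omega_1^2,\omega_2^2) = (\Delta_1,\Delta_2)$, giving $c(q_1 \perp q_2) = c(q_1)\otimes c(q_2)\otimes(\Delta_1,\Delta_2)$. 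This graded-to-ungraded conversion is the main obstacle: it is precisely the content of the Brauer--Wall group product formula, and I would either cite it or spell out the explicit isomorphism, taking care that the correction is $(\Delta_1,\Delta_2)$ and not some sign-twisted variant (harmless here, since we work in the $2$-torsion subgroup).

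Part (iii) is then immediate from (ii). The form $\langle a,-a\rangle$ is hyperbolic, so $C(\langle a,-a\rangle) \cong (a,-a)$ is split (the element $i+j$ squares to $a+(-a)=0$), and its signed discriminant is $(-1)\cdot a\cdot(-a) = a^2 \equiv 1$. Applying (ii) gives $c(q \perp \langle a,-a\rangle) = c(q)\otimes 1 \otimes (\Delta,1) = c(q)$.

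For (i) I would first check the rank-two case directly via bi-multiplicativity of the quaternion symbol: writing $\Delta = -bc$ for the signed discriminant of $\langle b,c\rangle$ and using $(a,a)=(a,-1)$, one computes $c(\langle a\rangle\otimes\langle b,c\rangle) = (ab,ac) = (a,a)(a,bc)(b,c) = (a,-bc)(b,c) = (a,\Delta)\,c(\langle b,c\rangle)$. The general even-rank case follows by induction on rank using (ii), together with the bookkeeping facts that scaling an even-rank form by $a$ preserves its signed discriminant modulo squares and that $\Delta$ is multiplicative over orthogonal sums of even-rank forms. I expect the only delicate points here to be this sign-and-discriminant bookkeeping, which is routine once the convention $\Delta = (-1)^{\binom{m}{2}}\det$ is fixed (matching the definition used in the application section).
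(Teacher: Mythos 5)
Your proposal is correct, but it takes a more self-contained route than the paper. The paper disposes of (i) and (ii) in one line by citing Proposition~IV.8.1.1 of Knus's book, and only writes out the short derivation of (iii) from (ii) via the triviality of $(a,-a)$ and $(1,\Delta)$ --- exactly the argument you give for (iii). You instead prove (ii) from the graded tensor product decomposition $C(q_1\perp q_2)\cong C(q_1)\,\hat\otimes\,C(q_2)$ together with the Brauer--Wall product formula, and you prove (i) by verifying the rank-two case $(ab,ac)=(a,\Delta)\otimes(b,c)$ by symbol manipulation and then inducting on the rank using (ii); both computations check out (in particular the bookkeeping that scaling an even-rank form by $a$ does not change its signed discriminant mod squares, and that the signed discriminant is multiplicative on orthogonal sums of even-rank forms, since the cross term in the binomial coefficient is $4mm'$). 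What your approach buys is transparency about where the correction term $(\Delta_1,\Delta_2)$ comes from; what it costs is that the graded-to-ungraded comparison is itself essentially the content of the reference the paper cites, so you have traded one citation for another (or for a page of explicit isomorphisms). One small caution: your parenthetical that a sign-twisted variant of the correction term would be ``harmless here, since we work in the $2$-torsion subgroup'' is not a valid escape hatch --- quaternion classes are all $2$-torsion, yet $(\Delta_1,\Delta_2)$ and $(-\Delta_1,-\Delta_2)$ are genuinely different classes in general (compare $(1,1)$ and $(-1,-1)$ over $\Q$). Fortunately the formula you state is the correct one, and you do say you would verify it, so this does not affect the validity of the argument.
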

	
	\begin{prf}
		Items (i) and (ii) follow from Proposition~IV.8.1.1 of \cite{knus}. For (iii), recall that $c(\langle a,b \rangle) = (a,b)$, so that by (ii) we have $c(q \perp \langle a,-a\rangle) = c(q) \otimes (a,-a) \otimes (1,\Delta)$ and both $(a,-a)$ and $(1,\Delta)$ are trivial in $\Br(K)$.
	\end{prf}
	
	We specialize to quadratic forms $q$ of rank six; diagonalizing, we may assume that $q = \langle a_1,\dots,a_6\rangle$ for some $a_i \in k$, $i = 1,\dots, 6$. Lemma~\ref{lem: clifford}(iii) allows us to add hyperbolic planes to $q$ without so changing the class of $c(q)$. Consider the quadratic form
	\[
		\langle a_1,a_2,a_3,a_4,a_5,a_6\rangle \perp \langle a_1a_2a_3,-a_1a_2a_3\rangle
		= \langle a_1,a_2,a_3,a_1a_2a_3\rangle \perp \langle a_4,a_5,a_6,-a_1a_2a_3\rangle,
	\]
	which is equivalent to the sum
	\begin{equation}
	\label{eq: decomp}
	\langle a_1a_2a_3\rangle\otimes \langle 1, a_1a_2,a_2a_3,a_1a_3\rangle
		\perp \langle -a_1a_2a_3\rangle\otimes \langle 1,-a_1a_2a_3a_4,-a_1a_2a_3a_5,-a_1a_2a_3a_6\rangle
	\end{equation}
	The quadratic form $\langle 1, a_1a_2,a_2a_3,a_1a_3\rangle$ is the norm form of the quaternion algebra $(-a_1a_2,-a_1a_3)$. Applying Lemma~\ref{lem: clifford} to the forms $q_1 = \langle 1, a_1a_2\rangle$ and $q_2 = \langle a_2a_3,a_1a_3\rangle$ we obtain
	\[
		c(\langle 1, a_1a_2,a_2a_3,a_1a_3\rangle) = (-a_1a_2,-a_1a_3).
	\]
	Applying Lemma~\ref{lem: clifford} to~\eqref{eq: decomp} we compute the class of $c(q)$ and obtain
	\[
		(-a_1a_2,-a_1a_3)\otimes c(\langle 1,-a_1a_2a_3a_4,-a_1a_2a_3a_5,-a_1a_2a_3a_6\rangle)  \otimes (-a_1a_2a_3,\Delta(q)) \in \Br(k)
	\]
	Over the discriminant extension $L = K\left(\sqrt{\Delta}\right)$, the quaternion algebra $(-a_1a_2a_3,\Delta)$ splits, and we have an equivalence of quadratic forms
	\[
		\langle 1,-a_1a_2a_3a_4,-a_1a_2a_3a_5,-a_1a_2a_3a_6\rangle \cong \langle 1,-a_1a_2a_3a_4,-a_1a_2a_3a_5,a_4a_5\rangle,
	\]
	the latter of which is the norm form of the quaternion algebra $(a_1a_2a_3a_4,a_1a_2a_3a_5)$.  Putting this all together, we obtain
	\[
		c(q)\otimes_K L = (-a_1a_2,-a_1a_3) \otimes (a_1a_2a_3a_4,a_1a_2a_3a_5) \in \Br L
	\]
	Lemma~\ref{lem: evencliff} then allows us to conclude the following.
	
	\begin{prp}
		\label{prop: evencliff computation}
		Let $q = \langle a_1,\dots,a_6\rangle$ be a nondegenerate diagonal quadratic form of rank six over a field $K$ of characteristic not two, with nontrivial discriminant extension $L$. Then
		\[
			c_0(q) = (-a_1a_2,-a_1a_3) \otimes (a_1a_2a_3a_4,a_1a_2a_3a_5) \in \Br(L) \eqno\qed
		\]
	\end{prp}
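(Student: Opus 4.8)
The plan is to compute $c(q)\otimes_K L$ explicitly and then invoke Lemma~\ref{lem: evencliff}, which identifies $c_0(q)$ with $c(q)\otimes_K L$ in $\Br(L)$. Since $q = \langle a_1,\dots,a_6\rangle$ is already diagonal, I would first apply Lemma~\ref{lem: clifford}(iii) to enlarge $q$ by the hyperbolic plane $\langle a_1a_2a_3, -a_1a_2a_3\rangle$ without changing the class of $c(q)$, and then regroup the resulting rank eight form into the two rank four blocks displayed in~\eqref{eq: decomp}. The purpose of this regrouping is that each block becomes a scalar multiple of a quaternionic norm form: the first block is $\langle a_1a_2a_3\rangle\otimes\langle 1,a_1a_2,a_2a_3,a_1a_3\rangle$, whose inner form is the norm form of $(-a_1a_2,-a_1a_3)$, while the second is $\langle -a_1a_2a_3\rangle$ times a norm form that will only simplify after base change to $L$.

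Next I would assemble the Clifford class block by block. For the inner form $\langle 1,a_1a_2,a_2a_3,a_1a_3\rangle$, applying Lemma~\ref{lem: clifford}(ii) to $q_1 = \langle 1,a_1a_2\rangle$ and $q_2 = \langle a_2a_3,a_1a_3\rangle$ yields $c = (-a_1a_2,-a_1a_3)$. The scalar prefactors $\langle\pm a_1a_2a_3\rangle$ are then accounted for using Lemma~\ref{lem: clifford}(i), and the two blocks are combined with Lemma~\ref{lem: clifford}(ii); the latter contributes the cross term $(-a_1a_2a_3,\Delta)$, where $\Delta$ is the signed discriminant of $q$. This produces the expression for $c(q)$ over $K$ recorded immediately before the proposition.

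The decisive step, and the one I expect to require the most care, is the passage to the discriminant extension $L = K(\sqrt{\Delta})$, where two simplifications occur at once: the cross term $(-a_1a_2a_3,\Delta)$ becomes trivial since $\Delta$ is a square in $L$, and the second block's norm form $\langle 1,-a_1a_2a_3a_4,-a_1a_2a_3a_5,-a_1a_2a_3a_6\rangle$ may be replaced by $\langle 1,-a_1a_2a_3a_4,-a_1a_2a_3a_5,a_4a_5\rangle$. The justification is that $(-a_1a_2a_3a_6)(a_4a_5) = -a_1\cdots a_6$ equals $\Delta$ up to the fixed sign in the definition of the signed discriminant, hence is a square in $L$, so $-a_1a_2a_3a_6$ and $a_4a_5$ agree modulo squares of $L$; substituting the last coefficient leaves the form isometric over $L$. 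The simplified form is the norm form of $(a_1a_2a_3a_4,a_1a_2a_3a_5)$, so after base change $c(q)\otimes_K L = (-a_1a_2,-a_1a_3)\otimes(a_1a_2a_3a_4,a_1a_2a_3a_5)$. Finally Lemma~\ref{lem: evencliff} converts this into the claimed expression for $c_0(q)$ in $\Br(L)$. The only genuine obstacle is the bookkeeping of which quaternion symbol each application of Lemma~\ref{lem: clifford} contributes and keeping the discriminant factors straight across the base change; the structural steps are forced by the regrouping~\eqref{eq: decomp}.
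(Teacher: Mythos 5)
Your proposal is correct and follows essentially the same route as the paper: add the hyperbolic plane $\langle a_1a_2a_3,-a_1a_2a_3\rangle$ via Lemma~\ref{lem: clifford}(iii), regroup as in~\eqref{eq: decomp}, assemble $c(q)$ with parts (i) and (ii), and pass to $L$ where the cross term $(-a_1a_2a_3,\Delta)$ splits and the second block becomes the norm form of $(a_1a_2a_3a_4,a_1a_2a_3a_5)$, concluding by Lemma~\ref{lem: evencliff}. Your explicit justification of the isometry $\langle 1,-a_1a_2a_3a_4,-a_1a_2a_3a_5,-a_1a_2a_3a_6\rangle \cong \langle 1,-a_1a_2a_3a_4,-a_1a_2a_3a_5,a_4a_5\rangle$ over $L$ (the two last coefficients differ by $\Delta$ modulo squares) is a correct filling-in of a step the paper merely asserts.
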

		
	\begin{cor}
		\label{cor:handy}
		Let $q$ be a nondegenerate quadratic form of rank 6 over a field $K$ of characteristic not two, with nontrivial discriminant extension $L$. Write $m_i$ for the determinant of the leading principal $i \times i$ minor of the Gram symmetric matrix of $q$. Then
		\[
			c_0(q) = (-m_2, -m_1 m_3)\otimes (m_4,-m_3m_5) \in \Br(L).
		\]
\end{cor}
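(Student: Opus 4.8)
The plan is to reduce the corollary to Proposition~\ref{prop: evencliff computation} by choosing a diagonalization of $q$ whose entries are ratios of consecutive leading principal minors, and then to simplify the two resulting quaternion symbols. Implicit in the statement is that the leading principal minors $m_1,\dots,m_6$ are all nonzero (otherwise the right-hand side is undefined), so I would begin by assuming this; note that $m_6 = \det q \neq 0$ automatically, since $q$ is nondegenerate. Under this assumption, symmetric Gaussian elimination (the $LDL^{t}$, or Jacobi, diagonalization of the Gram matrix) produces a congruent diagonal form $q \cong \langle a_1,\dots,a_6\rangle$ with $a_1 = m_1$ and $a_i = m_i/m_{i-1}$ for $2 \le i \le 6$, where we set $m_0 = 1$. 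The key point, which I would record first, is that the unit lower-triangular congruence preserves each leading principal minor exactly, so that $a_1 a_2\cdots a_i = m_i$ for every $i$; in particular $a_1 a_2 = m_2$, $a_1 a_3 = m_1 m_3/m_2$, $a_1a_2a_3a_4 = m_4$, and $a_1a_2a_3a_5 = m_3 m_5/m_4$.

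Substituting these values into Proposition~\ref{prop: evencliff computation} gives
\[
c_0(q) = \left(-m_2,\, -\tfrac{m_1 m_3}{m_2}\right) \otimes \left(m_4,\, \tfrac{m_3 m_5}{m_4}\right) \in \Br(L),
\]
and it remains to clear denominators and match signs using the standard relations for the quaternion symbol $(\,\cdot\,,\,\cdot\,)$: bilinearity $(a,bc) = (a,b)\otimes(a,c)$, invariance under squares $(a,bc^2) = (a,b)$, and the vanishing $(a,-a) = 0$ (whence also $(a,a) = (a,-1)$). For the first factor I would multiply the second slot by $m_2^2$ to get $(-m_2, -m_1 m_3 m_2) = (-m_2, -m_1 m_3)\otimes(-m_2, m_2)$, and then discard $(-m_2,m_2)$, which is trivial because it has the form $(a,-a)$ with $a = -m_2$. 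For the second factor I would similarly multiply by $m_4^2$ to reach $(m_4, m_3 m_5 m_4) = (m_4, m_3 m_5)\otimes (m_4, m_4) = (m_4, m_3 m_5)\otimes (m_4, -1) = (m_4, -m_3 m_5)$. Combining the two simplifications yields exactly $c_0(q) = (-m_2, -m_1 m_3)\otimes(m_4, -m_3 m_5)$, as claimed.

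There is no deep obstacle here; the whole argument is bookkeeping once Proposition~\ref{prop: evencliff computation} is in hand. The one point deserving care is the reduction to the Jacobi diagonalization, i.e.\ the identity $a_1\cdots a_i = m_i$, which relies on the leading principal minors being nonzero and on the fact that the triangular congruence fixes each leading principal minor exactly rather than merely up to squares. If one wishes to remove the nonvanishing hypothesis on the intermediate $m_i$, the cleanest route is to observe that the right-hand side is determined by the entries of the Gram matrix through the formulas above, while the nonvanishing of the $m_i$ is a dense open condition, so the identity extends by specialization; but for the intended application the generic situation, where all $m_i$ are nonzero, is all that is needed.
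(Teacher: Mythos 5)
Your proposal is correct and follows essentially the same route as the paper: diagonalize the Gram matrix by symmetric Gaussian elimination to $\langle m_1, m_2/m_1,\dots,m_6/m_5\rangle$, substitute into Proposition~\ref{prop: evencliff computation}, and clean up the two quaternion symbols using $(a,-a)=0$ and invariance under squares. The only addition is your explicit remark that the intermediate leading principal minors must be nonzero (with a specialization argument to relax this), a point the paper leaves implicit by citing the analogous computation in \cite{abbv}.
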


	\begin{prf}
		Symmetric Gaussian elimination of $M$ allows us to diagonalize $M$ to the matrix
		\[
			\text{diag}({ m_1, m_2/m_1, \dotsc, m_{6}/m_{5} });
		\]
		See the proof of Lemma~12 in \cite{abbv} for  details on this operation. Proposition~\ref{prop: evencliff computation} implies that
		\[
			c_0(q) = (-m_2,-m_1m_3/m_2) \otimes (m_4,m_3m_5/m_4)
		\]
		Finally, we have the equalities of classes in $\Br(L)$
		\[
			(-m_2,-m_1m_3/m_2) = (-m_2,-m_1m_2m_3) = (-m_2,m_2)\otimes (-m_2,-m_1m_3) = (-m_2,-m_1m_3)
		\]
		and similarly
		\[
			(m_4,m_3m_5/m_4) = (m_4,-m_3m_5).\eqno\qed
		\]
		\hideqed
\end{prf}

\subsection{Brauer-Manin obstructions}
\label{S:BMobs}

Let $S$ be a smooth projective geometrically integral variety over a number field $k$.  Write $\kbar$ for a fixed algebraic closure of $k$, and let $\Sbar$ denote the fibered product $S\times_k \kbar$. Write $\Adeles$ for the ring of adeles of $k$, and $\Omega$ for the set of places of $k$. Since $S$ is projective, the sets $S(\Adeles)$ and $\prod_{v \in \Omega} S(k_v)$ coincide; here $k_v$ denotes the completion of $k$ at $v \in \Omega$. A class $\calC$ of varieties as above is said to satisfy the {\defi Hasse principle} if
\[
S(\Adeles) \neq \emptyset \implies S(k) \neq \emptyset \qquad \textup{for every }S \in \calC.
\]
We say that $S$ satisfies \defi{weak approximation} if the diagonal embedding of $S(k)$ in $\prod_{v \in \Omega} S(k_v) = S(\Adeles)$ is dense for the product topology of the $v$-adic topologies.

Manin used class field theory to observe that any subset $\calS$ of the Brauer group $\Br(S) = \H_{\et}^2(S,\G_m)$ gives rise to an intermediate set
\begin{equation}
\label{eq:obs}
\overline{S(k)} \subseteq S(\Adeles)^\calS \subseteq S(\Adeles),
\end{equation}
where $\overline{S(k)}$ denotes the closure of $S(k)$ in $S(\Adeles)$; see~\cite{manin}. These intermediate sets can thus obstruct the Hasse principle (if $S(\Adeles) \neq \emptyset$ yet $S(\Adeles)^\calS = \emptyset$), and weak approximation (if $S(\Adeles) \neq S(\Adeles)^\calS$). This kind of obstruction is known as a \defi{Brauer-Manin obstruction}.

For each $x_v \in S(k_v)$, there is an evaluation map $\Br(S) \to \Br(k_v)$, $\alpha \mapsto \alpha(x_v)$ obtained by applying the functor $\H^2_{\et}(-,\G_m)$ to the morphism $\Spec k_v \to S$ corresponding to $x_v$. The set $S(\Adeles)^\calS$ is the intersection over $\alpha \in \calS$ of the sets
\[
	S(\Adeles)^\alpha := \bigg\{ (x_v) \in S(\Adeles) : \sum_{v \in \Omega} \inv_v\left(\alpha(x_v)\right) = 0\bigg\};
\]
here $\inv_v \colon \Br(k_v) \to \Q/\Z$ is the local invariant map at $v$ from local class field theory.

There is a filtration on the Brauer group
\[
\Br_0(S) \subseteq \Br_1(S) \subseteq \Br(S)
\]
where $\Br_0(S) := \im\left(\Br(\Spec k) \to \Br (S)\right)$ is the subgroup of \defi{constant} Brauer elements, and $\Br_1(S) := \ker\left(\Br(S) \to \Br(\Sbar)\right)$ is the subgroup of \defi{algebraic} Brauer elements. Classes $\alpha \in \Br(S)\setminus\Br_1(S)$ are called \defi{transcendental}.

K3 surfaces are some of the simplest varieties on which transcendental classes exist: curves and surfaces of negative Kodaira dimension have trivial geometric Brauer groups. For example, if $S$ is a K3 surface with $S(\Adeles) \neq \emptyset$ and $\Pic(\Sbar) = \NS(\Sbar) \cong \Z$, then any nonconstant class in $\Br(S)$ is transcendental, because there is an isomorphism
\[
	\Br_1(S)/\Br_0(S) \xrightarrow{\sim} \H^1(\Gal(\kbar/k),\Pic(\Sbar)),
\]
coming from the Hochschild-Serre spectral sequence, and the group $\H^1(\Gal(\kbar/k),\Pic(\Sbar))$ is trivial because $\Pic(\Sbar)$ is free with trivial Galois action in this case.

Details for the material in this subsection can be found in the surveys \cite{peyre,VA} and Chapter~5 of Skorobogatov's book \cite{skoro}.

\subsection{Local invariants at the real place}

We return to the situation at the beginning of \S\ref{S: Failure of WA}, specializing to the case where $k = \Q$. So let $X$ be a K3 surface of degree eight over $\Q$, and let $S \subseteq \P(1,1,1,3)$ be the associated degree two K3 surface, together with the class $\alpha \in \Br(S)$. Write, as before, $M(x,y,z)$ for the Gram symmetric matrix of the quadratic form $q$ in six variables associated to $S \to \P^2$. Let $P_0 = [x_0, y_0, z_0, w_0] \in S(\R)$ be a real point of $S$. From~\eqref{eq: S} it follows that $\det\left(M(x_0, y_0, z_0)\right) < 0$, so the signature of the symmetric matrix $M(x_0, y_0, z_0)$ is $(1, 5)$, $(5,1)$ or $(3,3)$. 
	
\begin{lem}
         \label{lemm:signature}
         Write $\infty$ for the real place of $\Q$. We have
         \[
			 \inv_\infty\left(\alpha(P_0)\right) =
			 \begin{cases}
			 0 & \textrm{if }\Sign(M(x_0,y_0,z_0)) = (3,3), \\
			 \frac{1}{2} & \textrm{if }\Sign(M(x_0,y_0,z_0)) = (1,5) \textrm{ or } (5,1).
			 \end{cases}
         \]
\end{lem}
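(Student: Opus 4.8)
The plan is to identify $\inv_\infty(\alpha(P_0))$ with the real Clifford invariant of the quadratic form specialized at $P_0$, and then to read this invariant off from the signature.

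First I would record what $\alpha(P_0)$ actually is. By Lemma~\ref{lem: evencliff}, the restriction of $\alpha$ to $k(S)$ is $c_0(q) = c(q)\otimes_{k(\P^2)}k(S)$, where $q$ is the rank-six form over $k(\P^2)$ with Gram matrix $M(x,y,z)$; that is, $\alpha|_{k(S)}$ is the image of $c(q)\in\Br(k(\P^2))$ under restriction. Since $\det M(x_0,y_0,z_0) < 0$, the point $P_0$ lies unramified over its image $\bar P_0 \in \P^2(\R)$, and $\Delta(\bar P_0) = -\det M(x_0,y_0,z_0) > 0$ is a square in $\R$. By functoriality of evaluation, evaluating the restricted class at $P_0$ returns the evaluation of $c(q)$ at $\bar P_0$. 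Thus $\alpha(P_0)$ is the class in $\Br(\R)\cong \tfrac12\Z/\Z$ of the full Clifford algebra of the real form $q_{P_0}$ with Gram matrix $M(x_0,y_0,z_0)$.

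Next I would reduce to a normal form. The Clifford invariant depends only on the isometry class of $q_{P_0}$, which over $\R$ is determined by $\Sign(M(x_0,y_0,z_0)) = (n_+,n_-)$ with $n_+ + n_- = 6$; because $\det M(x_0,y_0,z_0) < 0$ the number $n_-$ is odd, giving exactly the three cases $(5,1)$, $(3,3)$, $(1,5)$ of the statement. I may therefore replace $M(x_0,y_0,z_0)$ by $D = \operatorname{diag}(e_1,\dots,e_6)$ with $e_i\in\{1,-1\}$ and exactly $n_-$ entries equal to $-1$, and apply the biquaternion expression of Corollary~\ref{cor:handy} (equivalently Proposition~\ref{prop: evencliff computation} with Lemma~\ref{lem: clifford}). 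For $D$ the leading principal minors are $m_i = e_1\cdots e_i$, all nonzero, so the symbols are genuinely defined. Over $\R$ a quaternion algebra $(x,y)$ has $\inv_\infty = \tfrac12$ precisely when $x<0$ and $y<0$. Substituting $m_i = e_1\cdots e_i$ into $(-m_2,-m_1m_3)\otimes(m_4,-m_3m_5)$, the first factor is nonsplit exactly when $e_1 = e_2 = e_3$, and the second exactly when $e_4 = e_5$ and $e_1e_2e_3e_4 < 0$. Evaluating on the normal form of each signature then yields $\inv_\infty(\alpha(P_0)) \equiv \tfrac12 + \tfrac12 \equiv 0$ for $(3,3)$, and $\tfrac12$ for both $(5,1)$ and $(1,5)$, as claimed. (Alternatively one may quote the classification of real Clifford algebras, whose Brauer class depends only on $n_+ - n_- \bmod 8$.)

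The hard part will be the first step: justifying that the evaluation of $\alpha$ at the real point $P_0$ is literally the Clifford invariant of the specialized form. Although the even Clifford algebra is central simple over the generically nontrivial discriminant extension $k(S)/k(\P^2)$, at $P_0$ this extension splits, since $\Delta(\bar P_0)$ is a positive real; so the two components of the specialized even Clifford algebra both coincide with $c(q_{P_0})$, consistent with the descent of Lemma~\ref{lem: evencliff}. Checking that evaluation commutes with this descent and with the passage to the diagonal normal form is where the argument demands the most care. By contrast, the sign bookkeeping in the final step is purely mechanical.
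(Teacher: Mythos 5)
Your proof is correct and follows essentially the same route as the paper: both specialize the biquaternion representative of $c_0(q)$ coming from Proposition~\ref{prop: evencliff computation} at the real point and read off the invariant from the signs. The paper simply assumes the diagonal entries $a_i(P_0)$ have the signs dictated by the signature and notes that in the $(3,3)$ case both quaternion factors are nonsplit (hence cancel), treating the specialization step you worry about as routine; your detour through Corollary~\ref{cor:handy} and a $\pm 1$ normal form, and your remark that the discriminant extension splits at $P_0$ because $-\det M(x_0,y_0,z_0)>0$, are harmless elaborations of the same argument.
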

	
	\begin{proof}
		This is an application of Proposition \ref{prop: evencliff computation}, noting that $q$ has coefficients in $K := \Q(\P^2)$, and that the discriminant extension $L$ is $\Q(S)$. The proof of Lemma~\ref{lem: BSorderTwo} shows that $\alpha = c_0(q)$ in $\Br(\Q(S))$. We deduce from Proposition~\ref{prop: evencliff computation} that
		\begin{equation}
		\label{eq:realpts}
		\alpha = (-a_1a_2,-a_1a_3) \otimes (a_1a_2a_3a_4,a_1a_2a_3a_5) \in \Br(\Q(S))
		\end{equation}
		We may now compute invariants for the specialization $\alpha(P_0)$. For example, suppose that $\Sign(M(x_0,y_0,z_0)) = (3,3)$. Without loss of generality, we may assume that $a_i(P_0)$ is positive for $i = 1, 2$ and $3$, and negative for $i = 4, 5$, and $6$. Then all the entries of the quaternion algebras in~\eqref{eq:realpts} are negative, and hence $\alpha(P_0) = 0$ as an element of $\Br(\R)$. Consequently, $\inv_\infty\left(\alpha(P_0)\right) = 0$.  The other possible signatures for $M(x_0,y_0,z_0)$ are handled similarly.
	\end{proof}
	
	\begin{cor}
	         \label{cor:continuity}
	         Suppose that $S(\R)\neq \emptyset$.  Then there exists a point $P\in S(\R)$ such that
	         \[
	            \inv_\infty\left(\alpha(P)\right) = 0.
		\]
	\end{cor}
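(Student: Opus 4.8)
The plan is to use Lemma~\ref{lemm:signature} to reduce the statement to a purely topological claim about the family of symmetric matrices $M(x,y,z)$, and then to prove that claim by tracking eigenvalue sign-changes along a path. By Lemma~\ref{lemm:signature}, it suffices to produce a point $P = [x_0,y_0,z_0,w_0] \in S(\R)$ whose image $[x_0,y_0,z_0] \in \P^2(\R)$ satisfies $\Sign\big(M(x_0,y_0,z_0)\big) = (3,3)$, since then $\inv_\infty(\alpha(P)) = 0$. By~\eqref{eq: S}, a real point with $w_0 \neq 0$ corresponds exactly to a vector $v_0 = (x_0,y_0,z_0) \in \R^3\setminus\{0\}$ with $\det M(v_0) < 0$, i.e.\ with $M(v_0)$ of full rank and an odd number of negative eigenvalues; thus its signature is one of $(5,1)$, $(3,3)$, or $(1,5)$. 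I will assume such a $v_0$ exists, deferring to the end the degenerate case where every real point lies over the branch sextic.

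First I would fix a $v_0$ as above. If $\Sign(M(v_0)) = (3,3)$ we are done, so assume it is $(5,1)$; replacing $v_0$ by $-v_0$ if necessary (which swaps the signature to $(1,5)$ and leaves $\det$ unchanged, as $M$ is $6\times 6$), we may arrange $\Sign(M(v_0)) = (5,1)$ and $\Sign(M(-v_0)) = (1,5)$. Let $n_-(v) \in \{0,1,\dots,6\}$ denote the number of negative eigenvalues of $M(v)$, so that $n_-(v_0) = 1$ and $n_-(-v_0) = 5$. Choose a continuous path $\gamma\colon [0,1] \to \R^3\setminus\{0\}$ from $v_0$ to $-v_0$, which exists because $\R^3\setminus\{0\}$ is path-connected. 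Along $\gamma$ the integer $n_-(\gamma(t))$ is locally constant away from the parameters where $\det M(\gamma(t)) = 0$, and it changes by exactly $\pm 1$ whenever $\gamma$ crosses the discriminant locus transversally at a point where $M$ has corank one (a single eigenvalue passing through zero). Since it starts at $1$ and ends at $5$ while moving in steps of $\pm 1$, the discrete intermediate value theorem forces $n_-(\gamma(t^\ast)) = 3$ for some $t^\ast$; at such a $t^\ast$ the matrix $M(\gamma(t^\ast))$ has signature $(3,3)$ and hence $\det M(\gamma(t^\ast)) < 0$, which produces the desired real point of $S$ with $\inv_\infty(\alpha(P)) = 0$.

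The main obstacle is justifying the unit step-size behaviour of $n_-$, which requires $\gamma$ to meet the real cone $\{\det M = 0\}$ only transversally and only along its corank-one locus. Here I would invoke general position together with the genericity of $X$ already used in Section~\ref{ss: 8to2}: for generic $X$ the discriminant sextic parametrizes rank-five (corank-one) quadrics, so the locus of corank $\geq 2$ has codimension at least two and can be avoided by a one-parameter family after a small perturbation of $\gamma$, while transversality of the remaining crossings is generic. A secondary point to handle is the boundary case in which $\{\det M < 0\}(\R) = \emptyset$, so that the only real points of $S$ lie over $C(\R)$ with $w_0 = 0$; I expect to dispose of this either by a limiting argument applied to the construction above or by noting that it does not occur for the surfaces of interest, for which the obstruction we study forces the existence of real points with $w_0 \neq 0$.
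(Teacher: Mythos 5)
Your argument is essentially the paper's: both reduce via Lemma~\ref{lemm:signature} to producing a point $(x_0,y_0,z_0)$ with $\Sign\left(M(x_0,y_0,z_0)\right)=(3,3)$, and both get there by an intermediate-value argument on the signature along a path from $v_0$ to $-v_0$ that crosses the discriminant locus only at simple zeros of $\det M$. The paper takes the path to be a line through $Q$ and $-Q$ in the signed projective plane $\mathbb S=(\R^3\setminus\{0\})/\R_{>0}$, chosen by Bertini so that it meets the real discriminant curve transversally; simplicity of the roots of $\det M$ restricted to that line already forces exactly one eigenvalue to cross zero at each crossing, so the paper does not need to invoke the corank-one genericity of the discriminant sextic or a perturbation of a general path. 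Your version accomplishes the same thing with slightly heavier general-position input.

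The one genuine loose end is your ``degenerate case'' in which every real point of $S$ lies over the branch sextic. Your proposed fallback --- that the obstruction under study forces the existence of real points with $w_0\neq 0$ --- is not a justification and would be circular. The paper disposes of this case at the outset by a dimension count: since $S$ is smooth and $S(\R)\neq\emptyset$, the set $S(\R)$ is a real $2$-manifold, whereas the real points of the ramification curve form a set of real dimension at most $1$, so some real point of $S$ satisfies $\det M(x_0,y_0,z_0)\neq 0$ (hence $<0$). With that observation inserted in place of your deferral, your proof is complete.
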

	\begin{proof}
	The set $S(\R)$ of real points of $S$ is a 2-dimensional real manifold since it is not empty; see~\cite{shafarevich}, p.\ 106.  Hence there is a point $P = [x_0,y_0,z_0,w_0] \in S(\R)$ such that $\det(M(x_0,y_0,z_0)) \neq 0$, because the set of real points of the discriminant curve in $S$ is either empty or has real dimension $1$.  Consider the signed projective plane $\mathbb S = \R^3 \setminus \{(0,0,0)\} / \R_{>0}$, which is topologically a sphere. Write $Q = [x_0,y_0,z_0] \in \mathbb S$ and ${-Q} = [-x_0,-y_0,-z_0] \in \mathbb S$ for the two points in $\mathbb S$ corresponding to $P$.  If the signature of $M(Q)$ is $(3,3)$, then by Lemma~\ref{lemm:signature} we are done.  Since the signature of $M(-Q)$ is negative that of $M(Q)$, we may assume that $M(Q)$ has signature $(1,5)$ and $M(-Q)$ has signature $(5,1)$. 
	
	Let $\gamma$ denote the discriminant curve in $\mathbb S$, i.e.,
	\[
	 \gamma = \{ [x,y,z] \in \mathbb S \mid \det ( M(x,y,z)) = 0\},
	\]
	which is a disjoint union of smooth closed curves. We claim there is a line $\ell\subset \mathbb S$ that contains $Q$ and $-Q$, and that meets $\gamma$ transversally.  This is an application of Bertini's theorem: Consider $Q$ as a point in $\P^2$, and note that the set of lines passing through $Q$ is a $\P^1$. Bertini assures us that (over $\C$) the set of lines through $Q$ meeting the curve $\det(M(x,y,z)) = 0$ in $\P^2$ forms a nonempty open subset $U\subseteq \P^1$. Hence $U(\R) = U\cap \P^1(\R) \neq \emptyset$, and any element of this set gives a line $\ell\subset \mathbb S$, as desired.
	
	Let $f$ be the restriction of $\det ( M(x,y,z))$ to $\ell$. Then $f$ has simple roots by transversality of $\gamma\cap \ell$.
	This implies that as we travel from $Q$ to $-Q$ along $\ell$ and cross $\gamma$, the signature of $M(x,y,z)$ will change from $(a,b)$ to either $(a+1,b-1)$ or $(a-1,b+1)$, and starting from signature $(1,5)$, we must reach signature $(5,1)$. Hence, along $\ell$, there must be a point $R \in \mathbb S$ such that the signature of $M(R)$ is $(3,3)$, and consequently $\det(M(R)) < 0$. Lifting $R$ to a point in $S(\R)$, and applying Lemma~\ref{lemm:signature}, we obtain the desired result.
	\end{proof}

\subsection{An explicit example}

	Let $X$ be the K3 surface of degree eight over $\Q$ given as the smooth complete intersection of three quadrics in $\P^5$ with Gram matrices
	{\small
	\[
		M_1 :=
		\begin{pmatrix}
			-6 & 1 & -3 & 3 & -1 & 1 \\
			1 & 26 & 3 & 2 & 2 & 3 \\
			-3 & 3 & 2 & 1 & 2 & -3 \\
			3 & 2 & 1 & 28 & 0 & 0 \\
			-1 & 2 & 2 & 0 & 12 & 1 \\
			1 & 3 & -3 & 0 & 1 & 8
		\end{pmatrix},
		\quad
		M_2 :=
		\begin{pmatrix}
			0 & -1 & 0 & -3 & 1 & 3 \\
			-1 & 8 & 1 & -2 & -2 & 3 \\
			0 & 1 & 24 & 2 & -3 & -3 \\
			-3 & -2 & 2 & -2 & -1 & -2 \\
			1 & -2 & -3 & -1 & 28 & 3 \\
			3 & 3 & -3 & -2 & 3 & 16
		\end{pmatrix},
	\]

	\[
		\textup{and}\quad M_3 :=
		\begin{pmatrix}
			8 & 2 & -1 & -2 & 0 & 0 \\
			2 & 32 & 0 & 0 & -3 & -2 \\
			-1 & 0 & 8 & -1 & 3 & 0 \\
			-2 & 0 & -1 & 24 & -3 & -1 \\
			0 & -3 & 3 & -3 & 28 & 3 \\
			0 & -2 & 0 & -1 & 3 & 32
		\end{pmatrix}.
	\]
	}

	\begin{prp}
		\label{Prop: Pic1}
		Let $S$ be the K3 surface of degree two in $\P(1,1,1,3) = \Proj\Q[x,y,z,w]$ given by
	\[
		w^2 = -\det\left(x M_1 + y M_2 + z M_3\right)
	\]
	Then $\Pic(\Sbar) = \NS(\Sbar) \cong \Z$.
	\end{prp}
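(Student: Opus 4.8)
The plan is to follow van Luijk's method: bound $\rho(\Sbar) := \rk \NS(\Sbar)$ from above by reducing $S$ modulo two suitable primes of good reduction, and then rule out the value $2$ by a discriminant obstruction. Since $S$ carries a polarization we already have $\rho(\Sbar) \geq 1$, so it suffices to prove $\rho(\Sbar) \leq 2$ and then $\rho(\Sbar) \neq 2$.

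First I would check that $S$ has good reduction at two well-chosen odd primes $p_1, p_2$, i.e.\ that the reduced sextic $\det\bigl(x M_1 + y M_2 + z M_3\bigr) = 0$ stays smooth over each $\F_{p_i}$, so that $S_{p_i} := \Sbar \otimes \F_{p_i}$ is a smooth K3 surface. For each such prime $p$, the Lefschetz trace formula gives $\#S_p(\F_{p^n}) = 1 + p^{2n} + \tr\bigl(\Frob^n \mid \HH^2_{\et}(\Sbar_p,\Q_\ell)\bigr)$, since $\HH^1 = \HH^3 = 0$ for a K3 surface. Counting points over $\F_{p^n}$ for $n = 1,\dots,11$, and invoking the functional equation coming from Poincar\'e duality (Frobenius acts on the $22$-dimensional lattice $\HH^2$ as a similitude of multiplier $p^2$, pairing $\lambda \leftrightarrow p^2/\lambda$), determines the characteristic polynomial $P_p(T)$ of Frobenius completely.

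Next I would extract the upper bound. By Tate's theorem the cycle class map is injective, so $\rho(\Sbar_p)$ is at most the number of eigenvalues $\lambda$ of Frobenius with $\lambda/p$ a root of unity, i.e.\ the combined degree of the factors of $P_p(T)$ of the form $p^{\deg\Phi}\Phi(T/p)$ for a cyclotomic polynomial $\Phi$. The primes $p_1,p_2$ are to be chosen so that this ``algebraic part'' of $P_{p_i}(T)$ has degree exactly $2$ (the factor $(T - p_i)$ from the polarization together with one further such factor). Combined with the specialization inequality $\rho(\Sbar) \leq \rho(\Sbar_{p_i})$ for good reduction, this gives $\rho(\Sbar) \leq 2$. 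Finally, to exclude $\rho(\Sbar) = 2$, I would compare discriminants: if $\rho(\Sbar) = 2$ then $\NS(\Sbar)$ injects, as a finite-index sublattice preserving the intersection form, into each rank-two lattice $\NS(\Sbar_{p_i})$, and since a finite-index sublattice alters the discriminant by a square, the class of $\Disc \NS(\Sbar)$ in $\Q^\times/(\Q^\times)^2$ would simultaneously equal those of $\Disc \NS(\Sbar_{p_1})$ and $\Disc \NS(\Sbar_{p_2})$. The square class of $\Disc \NS(\Sbar_{p_i})$ is read off from $P_{p_i}(T)$ via the Artin--Tate formula (valid here, as the Tate conjecture for K3 surfaces and finiteness of $\Br$ are known and $\#\Br$ is a perfect square), after passing to the finite extension of $\F_{p_i}$ over which the full geometric N\'eron--Severi group is defined. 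The primes are chosen precisely so that these two square classes differ, giving a contradiction and forcing $\rho(\Sbar) = 1$.

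The main obstacle is twofold and largely computational: one must actually locate two primes at which the reduction is smooth and the Frobenius eigenvalue pattern yields geometric Picard number $2$ with discriminants in distinct square classes, and one must carry out the point counts over $\F_{p^n}$ up to $n = 11$ accurately enough to pin down $P_p(T)$. On the theoretical side, the delicate point is extracting the correct square class from Artin--Tate, keeping careful track of the powers of $p$, of the exact field of definition of the algebraic eigenvalues, and of the squareness of $\#\Br(S_{p_i})$.
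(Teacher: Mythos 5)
Your proposal is correct in outline, but it follows a genuinely different route from the paper. You propose the original van Luijk method: point-count at \emph{two} primes of good reduction, pin down the Frobenius characteristic polynomial at each via the functional equation, deduce $\rho(\Sbar)\leq 2$ from the count of eigenvalues that are $p$ times a root of unity, and then kill the case $\rho(\Sbar)=2$ by showing the square classes of $\Disc\NS(\Sbar_{p_1})$ and $\Disc\NS(\Sbar_{p_2})$ (extracted via Artin--Tate) are distinct. The paper instead follows the variant of \S5.3 of~\cite{hv11}: it point-counts at the \emph{single} prime $3$ over $\F_{3^n}$ for $n=1,\dots,10$ to get $\rho(S\times\Fbar_3)\leq 2$, exhibits an explicit tritangent line $x+2z=0$ to the branch sextic mod $3$ whose pullback generates a rank-two sublattice (so equality holds, with explicit geometric generators), and then rules out $\rho(\Sbar)=2$ by a Gr\"obner-basis check that the reduction mod $5$ has \emph{no} tritangent line --- if $\rho(\Sbar)$ were $2$, the tritangent configuration would be forced over $\Qbar$ and would reduce mod $5$. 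The trade-off: your approach is uniform and needs no explicit divisors, but costs a second full round of point counting (the expensive step) and a careful Artin--Tate discriminant computation with its powers of $p$ and field-of-definition subtleties, which you rightly flag; the paper's approach replaces all of that with a cheap geometric non-existence check at a second prime, at the price of needing to recognize explicit generators of $\NS(S\times\Fbar_3)$. Both are standard and both would prove the proposition, subject in each case to the computations actually coming out favorably for this particular surface.
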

	
	\begin{proof}
		We follow the strategy of \S5.3 of \cite{hv11}, which can be summarized as follows:
		\begin{enumerate}
			\item Prove that $\Pic\left( S\times \overline{\F}_3 \right)$ is isomorphic to $\Z^2$, generated by the two components of the pullback from $\P^2$ of a tritangent line to the sextic branch curve.
			\item Find a prime $p > 3$ of good reduction of $S$ for which the reduced sextic branch curve has no tritangent line.
			\item Apply Proposition~5.3 of~\cite{hv11} to conclude that $\Pic(\Sbar) \cong \Z$: otherwise the tritangent line over $\F_3$ would lift to $\Q$, giving rise to a tritangent line over $\F_p$ for any other prime $p$ of good reduction for $S$.
		\end{enumerate}
		
		The surface $S$ has good reduction at $3$. An equation for $S\times\F_3$ is given by
		\[
			\begin{split}
			w^2 &= (x + 2z)(x^4y + 2x^3y^2 + 2x^3z^2 + 2x^2y^3 + x^2z^3 + 2xy^4 \\
        &\qquad +xy^3z + 2xy^2z^2 + xyz^3 + 2y^4z + y^3z^2 +
        2z^5) + (x^2y + y^3)^2,
			\end{split}
		\]
		from which it is clear that the line $x + 2z = 0$ is tritangent to the branch sextic on $\P^2$. The pullback of this tritangent line to $S\times {\F}_3$ generates a rank two sublattice of $\Pic\left( S\times \overline{\F}_3 \right)$. Let $f$ be the characteristic polynomial for the action of Frobenius on ${\rm H}^2_\textrm{\'et}\left( S \times \overline{F}_3, \overline{\Q}_\ell\right)$, where $\ell \neq 3$ is a prime number. Normalize this polynomial by setting $f_3(t) = 3^{-22}f(3t)$. Then the rank of $\Pic\left( S\times \overline{\F}_3 \right)$ is bounded above by the number of roots of $f_3(t)$ that are roots of unity; see Corollary~2.3 of \cite{vanLuijk}. The computation of $f_3(t)$ is standard: it suffices to determine $\#S(\F_{3^n})$ for $n = 1,\dots,10$; the Lefschetz trace formula and the functional equation for $f$ then allows one to determine enough traces of powers of Frobenius acting on ${\rm H}^2_\textrm{\'et}\left( S \times \overline{F}_3, \overline{\Q}_\ell\right)$ to reconstruct $f$ by elementary linear algebra. See \cite{vanLuijk} for details. We obtain
		\[
			\begin{split}
				f_3(t) = \frac{1}{3}(t - 1)^2&(3t^{20} + t^{19} + 2t^{18} + t^{17} + 3t^{16} + t^{15} + 2t^{14} - t^{13} - t^{12} \\
				&\qquad - t^{11} - t^9 - t^8 - t^7 + 2t^6 + t^5 + 3t^4 + t^3 + 2t^2 + t + 3)
			\end{split}
		\]
		The roots of the degree $20$ factor of $f_3(t)$ are not roots of unity, because they are not integral. Hence $\Pic\left( S\times \overline{\F}_3 \right) \cong \Z^2$.
		
		A Gr\"obner basis computation, using \cite[Algorithm~8]{EJ-ANTS}, shows that the reduction of $S$ at $5$ (which is smooth) has no line tritangent to the branch curve. This concludes the proof of the proposition.
	\end{proof}
	
	By Corollary~\ref{cor:handy}, the Brauer class $\alpha \in \Br(S)$ arising from $X$ is represented over $\Br\left(\Q(S)\right)$ by tensor product of quaternion algebras $(-m_2, -m_1 m_3)\otimes (m_4,-m_3m_5)$, where
	\begin{equation}
	\label{eq:mms}
	\begin{split}
    		m_1 &= -6x + 8z, \\
		m_2 &= -157x^2 - 46xy + 12xz - y^2 + 68yz + 252z^2, \\
		m_3 &= -512x^3 - 3884x^2y - 1790x^2z - 1094xy^2 - 48xyz \\
			&\qquad + 370xz^2 - 24y^3 + 1618y^2z + 6580yz^2 + 1984z^3, \\
    		m_4 &= -14896x^4 - 112256x^3y - 64196x^3z - 13639x^2y^2 - 88686x^2yz \\
			&\qquad - 31415x^2z^2 + 1230xy^3 + 28380xy^2z +
    190454xyz^2 + 66580xz^3 \\
    			&\qquad - 1967y^4 - 14274y^3z +
    12573y^2z^2 + 148652yz^3 + 46212z^4, \\
    		m_5 &= -154622x^5 - 1832494x^4y - 1088428x^4z - 3261270x^3y^2 \\
			&\qquad - 6264622x^3yz - 2086758x^3z^2 - 353890x^2y^3 -
    2306720x^2y^2z \\
    			&\qquad - 992652x^2yz^2 - 124086x^2z^3 + 2698xy^4 + 587200xy^3z \\
			&\qquad + 6271452xy^2z^2 + 9184426xyz^3 + 2279020xz^4 - 51948y^5 \\
			&\qquad - 439790y^4z - 82534y^3z^2 +
    4374124y^2z^3 + 5413502yz^4 + 1214952z^5
    \end{split}
   \end{equation}
    Consider the real points on $S$ given by
    \[
    		P_1 := [1,2,-1,924]\qquad\textrm{and}\qquad
		P_2 := [0,-1,1,\sqrt{1863673}]
    \]
    Using Lemma~\ref{lemm:signature}, we compute
    \begin{equation}
    \label{eq:invariants}
    		\inv_\infty\left(\alpha(P_1)\right) = 0, \qquad\textrm{and}\qquad
		\inv_\infty\left(\alpha(P_2)\right) = \frac{1}{2}.
    \end{equation}
    The point $P_1$, embedded diagonally in $S(\Adeles)$, lies in the set $S(\Adeles)^\alpha$; see~\eqref{eq:obs}. Let $(P_v) \in S(\Adeles)$ be the adelic point given by
    \[
    		P_v =
		\begin{cases}
			P_1 & \textrm{if } v \neq \infty, \\
			P_2 & \textrm{otherwise},
		\end{cases}
    \]
    The containment $P_1 \in S(\Adeles)^\alpha$ and~\eqref{eq:invariants} together imply that
    \[
	    	\sum_v \inv_v \alpha(P_v) = \frac{1}{2} \in \Q/\Z.
    \]
    Hence $(P_v) \in S(\Adeles)\setminus S(\Adeles)^{\alpha}$, which shows that $S$ is does not satisfy weak approximation on account of $\alpha$. As explained in \S\ref{S:BMobs}, Proposition~\ref{Prop: Pic1} implies that
    \[
    		{\rm H}^1\left(\Gal(\Qbar/\Q),\Pic(\Sbar)\right) = 0,
    \]
    so there is no {\em algebraic} Brauer-Manin obstruction to weak approximation on $S$.  We summarize our results in the following theorem.

	\begin{thm}
		\label{thm:counterexample}
		Let $M_1$, $M_2$ and $M_3$ be the three symmetric matrices defined above. Let $S$ be the K3 surface of degree two in $\P(1,1,1,3) = \Proj\Q[x,y,z,w]$ given by
	\[
		w^2 = -\det\left(x M_1 + y M_2 + z M_3\right)
	\]
	Let $\alpha\in \Br\left(\Q(S)\right)$ be the tensor product of quaternion algebras $(-m_2, -m_1 m_3)\otimes (m_4,-m_3m_5)$, with $m_1,\dots,m_5$ as in~\eqref{eq:mms}. Then $\alpha$ extends to an  element of $\Br(S)$ that gives rise to a transcendental Brauer-Manin obstruction to weak approximation on $S$.
	\end{thm}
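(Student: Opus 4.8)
The plan is to assemble the theorem from the geometric and arithmetic inputs developed above, establishing in turn that $\alpha$ extends to $\Br(S)$, that it genuinely obstructs weak approximation, and that this obstruction is transcendental. First I would recall that $\alpha$ was already produced as a bona fide class in $\Br(S)$ in \S\ref{ss: 8to2}: by Lemma~\ref{obstruction} it is the order-two obstruction to a universal sheaf on $S = M_X(2,h,2)$, and the proof of Lemma~\ref{lem: BSorderTwo} identifies its image in $\Br(\Q(S))$ with the class $c_0(q)$ of the even Clifford algebra. Corollary~\ref{cor:handy} then rewrites $c_0(q)$ as the biquaternion algebra $(-m_2,-m_1m_3)\otimes(m_4,-m_3m_5)$ appearing in the statement. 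Since $S$ is a smooth projective variety, the restriction map $\Br(S)\hookrightarrow\Br(\Q(S))$ is injective, so this biquaternion class is precisely the function-field image of the geometric $\alpha$; in particular $\alpha$ extends to $\Br(S)$, and the local evaluation maps at adelic points of $S$ are well defined. Note also that $S(\Adeles)\neq\emptyset$ since $S$ has a rational point.

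For the obstruction itself, the key device is to exploit global reciprocity at a rational point so as to bypass any computation of invariants at the finite places. One checks that $P_1 = [1,2,-1,924]$ is a $\Q$-rational point of $S$; hence $\alpha(P_1)\in\Br(\Q)$ and, by the fundamental exact sequence of class field theory, $\sum_v \inv_v\alpha(P_1) = 0$. Using Lemma~\ref{lemm:signature} and the signatures of $M(1,2,-1)$ and $M(0,-1,1)$, one obtains the archimedean invariants recorded in \eqref{eq:invariants}: $\inv_\infty\alpha(P_1)=0$ and $\inv_\infty\alpha(P_2)=\tfrac12$, where $P_2=[0,-1,1,\sqrt{1863673}]\in S(\R)$. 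Now form the adelic point $(P_v)$ with $P_v=P_1$ for every finite place and $P_\infty=P_2$. Since reciprocity forces $\sum_{v\neq\infty}\inv_v\alpha(P_1)=-\inv_\infty\alpha(P_1)=0$, replacing only the archimedean component yields $\sum_v\inv_v\alpha(P_v)=\tfrac12\neq 0$ in $\Q/\Z$. Thus $(P_v)\in S(\Adeles)\setminus S(\Adeles)^\alpha$, while Manin's inclusion~\eqref{eq:obs} gives $\overline{S(\Q)}\subseteq S(\Adeles)^\alpha$. Therefore $S(\Q)$ is not dense in $S(\Adeles)$, i.e.\ weak approximation fails on account of $\alpha$.

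Finally I would upgrade the obstruction to a transcendental one. By Proposition~\ref{Prop: Pic1}, $\Pic(\Sbar)=\NS(\Sbar)\cong\Z$ with trivial Galois action, so $\H^1(\Gal(\Qbar/\Q),\Pic(\Sbar))=0$ and hence $\Br_1(S)=\Br_0(S)$, as explained in \S\ref{S:BMobs}. A constant class in $\Br_0(S)$ evaluates at any adelic point to a family whose local invariants sum to zero by reciprocity, so it can never obstruct weak approximation; since our $\alpha$ does obstruct, $\alpha\notin\Br_0(S)=\Br_1(S)$, and $\alpha$ is transcendental. Combining the three parts completes the proof.

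The substantive difficulties all live in the inputs rather than in this assembly. The genuinely hard step is the Picard-rank-one computation of Proposition~\ref{Prop: Pic1}, which underlies the transcendence claim and is carried out by van Luijk's method (reduction at $3$ and $5$, the normalized characteristic polynomial of Frobenius, and a Gr\"obner-basis search ruling out tritangent lines). Equally delicate is the engineering of the explicit matrices $M_1,M_2,M_3$ so that $S$ simultaneously has a convenient rational point $P_1$ with vanishing archimedean invariant and a real point $P_2$ with invariant $\tfrac12$; given these, Lemma~\ref{lemm:signature} reduces the local analysis to reading off signatures, and the reciprocity trick removes the finite places from consideration entirely.
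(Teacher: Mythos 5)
Your proposal is correct and follows essentially the same route as the paper: the identification of $\alpha$ with the even Clifford algebra class via Corollary~\ref{cor:handy}, the reciprocity trick at the rational point $P_1$ combined with the archimedean invariant computation of Lemma~\ref{lemm:signature} at $P_1$ and $P_2$, and the appeal to Proposition~\ref{Prop: Pic1} to rule out an algebraic explanation. The only difference is that you spell out the reciprocity step and the deduction $\Br_1(S)=\Br_0(S)\Rightarrow\alpha$ transcendental a bit more explicitly than the paper does, which is harmless.
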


\subsection{What about the Hasse principle?}
\label{ss: hp?}

It is natural to ask if  elements $\alpha \in \Br(S)[2]$ as above can obstruct the existence of rational points on $S$.  This does not happen for the surface of Theorem~\ref{thm:counterexample}: the point $P_1$ is rational.

A Brauer-Manin obstruction to the Hasse principle arising from a $2$-torsion Brauer element $\alpha$ requires the image of the evaluation maps
\[
\ev_{\alpha,p}\colon S(\Q_v) \to {\textstyle\frac{1}{2}\Z}/\Z\qquad P \mapsto \inv_v(\alpha(P))
\]
be \emph{constant} for all places $v$, including the infinite place. Otherwise, an adelic point $(P_v) \in S(\Adeles)$ can be modified at a place where $\ev_{\alpha,v}$ is non constant to arrange that $\sum_v \ev_{\alpha,v}(P_v) = 0$, which means that $(P_v) \in S(\Adeles)^\alpha$, so $\alpha$ does not obstruct the Hasse principle.  We must also have $\sum_v \ev_{\alpha,v}(P_v) = \frac{1}{2}$ for every $(P_v) \in S(\Adeles)$.

The evaluation map $\ev_{\alpha,p}$ can only take nonzero values at a finite number of places: the places of bad reduction for $S$, the places where $\alpha$ ramifies, and the infinite place.  To obtain an obstruction to the Hasse principle from $\alpha$, we must have $\ev_{\alpha,\infty}(P) = 0$ for all points $P \in S(\R)$, by Corollary~\ref{cor:continuity}.  We expect that an argument similar to that of Lemma~4.4 of~\cite{hv11}, shows that, for any prime $p\neq 2$ of good reduction for $\alpha$, we have $\ev_{p,\alpha}(P) = 0$ for all $P \in S(\Q_p)$.  For primes $p \neq 2$ of bad reduction, Proposition~4.1 and Lemma~4.2 of \cite{hv11} show that $\ev_{\alpha,p}$ is constant, provided the singular locus of the reduction of $S$ consists of at most 7 ordinary double points. We thus expect that the a reasonable way to construct a counterexample to the Hasse principle using elements of the form $\alpha$ is to pick matrices $M_1$, $M_2$, and $M_3$ in such a way that $\ev_{\alpha, 2}(P) = \frac{1}{2}$ for all points $P \in S(\Q_2)$; an analysis similar to that in Section~4.3 of~\cite{hv11} may prove sufficient for this purpose.

\begin{flushleft}
University of Montana\hfill kelly.mckinnie@umontana.edu\\
Department of Mathematical Sciences\hfill www.math.umt.edu/mckinnie\\
Missoula MT 59812-0864\\
USA\\
\end{flushleft}

\begin{flushleft}
Department of Mathematics\hfill sawon@email.unc.edu\\
University of North Carolina\hfill www.unc.edu/$\sim$sawon\\
Chapel Hill NC 27599-3250\\
USA\\
\end{flushleft}

\begin{flushleft}
Department of Mathematics MS 136\hfill sho.tanomito@rice.edu\\
Rice University\hfill math.rice.edu/$\sim$st26\\
6100 S. Main St.\\
Houston TX 77005-1982\hfill av15@rice.edu\\
USA\hfill math.rice.edu/$\sim$av15\\
\end{flushleft}

\end{document}